\newcommand{\dgh}{d_{\mathrm{GH}}}
\newcommand{\dB}{d_{\mathrm{B}}}
\newcommand{\dH}{d_{\mathrm{H}}}
\newcommand{\cset}{\mathrm{K}}
\newcommand{\dgm}{\mathrm{dgm}}
\newcommand{\pow}{\mathrm{pow}}
\newcommand{\diam}{\mathrm{diam}}
\newcommand{\ecc}{\mathrm{ecc}}
\newcommand{\dis}{\mathrm{dis}}
\newcommand{\ud}{\mathrm{U}}
\newcommand{\hd}{\mathrm{H}}
\newcommand{\id}{\text{id}}
\DeclareMathOperator{\im}{im}
\DeclareMathOperator{\ult}{ult}
\DeclareMathOperator{\hyp}{hyp}
\newcommand{\R}{\mathbb{R}}
\newcommand{\N}{\mathbb{N}}
\newcommand{\M}{\mathcal{M}}
\newcommand{\F}{\mathcal{F}}
\newcommand{\D}{\mathcal{D}}
\newcommand{\V}{\mathcal{V}}
\theoremstyle{plain}
\newtheorem{thm}{Theorem}[section]
\newtheorem{cor}[thm]{Corollary}
\newtheorem*{thm*}{Theorem}
\theoremstyle{definition}
\newtheorem{defn}[thm]{Definition}
\newtheorem{prop}[thm]{Proposition}
\newtheorem{ex}[thm]{Example}
\newtheorem{lemma}[thm]{Lemma}
\newtheorem{rem}[thm]{Remark}
\def\BState{\State\hskip-\ALG@thistlm}
\title{New families of simplicial filtration functors}
\author{Samir Chowdhury}
\email[S. Chowdhury]{chowdhury.57@osu.edu}
\address{The Ohio State University, Mathematics.}
\author{Nathaniel Clause}
\email[N. Clause]{nathaniel.clause@vanderbilt.edu}
\address{Vanderbilt University.}
\author{Facundo M\'emoli}
\email[F. M\'emoli]{memoli@math.osu.edu}
\address{The Ohio State University, Mathematics and Computer Science and Engineering.}
\author{Jose \'Angel S\'anchez}
\email[J. A. S\'anchez]{jose.sanchez@cimat.mx}
\address{University of Guanajuato}
\author{Zoe Wellner}
\email[Z. Wellner]{zaw5@cornell.edu}
\address{Cornell University.}
\begin{document}
\begin{abstract}

The so called \v{C}ech and Vietoris-Rips simplicial filtrations are designed to capture information about the topological structure of metric datasets. These two filtrations are two of the workhorses in the field of topological data analysis. They enjoy stability with respect to the Gromov-Hausdorff (GH) distance, and this stability property allows us to estimate the GH distance between finite metric space representations of the underlying datasets.

Via the concept of Gromov's curvature sets we construct a rich theoretical framework of valuation-induced stable filtration functors. This framework includes the \v{C}ech and Vietoris-Rips filtration functors as well as many novel filtration functors that capture diverse features present in datasets. We further explore the concept of basepoint filtrations functors and use it to provide a classification of the filtration functors that we identify.

\end{abstract}

\maketitle

\section{Introduction}
When  analyzing a  dataset one usually  regards the data set as a finite metric space of some sort. When looking at a finite metric space, however, there is a priori no interesting topology. In order to be able to uncover the topology features which may be present but not apparent in a data set and to be able to gain information from this topology, we want to somehow find a structure that allows us to place higher dimensional structures on top of the data points.

A way to do so is to compute a filtration such as the Vietoris-Rips  or the \v{C}ech filtration \cite{edels-book}. The resulting object is a nested sequence of simplicial complexes for which we can compute \emph{persistent homology} \cite{frosini1992measuring, robins1999towards, carlsson2009topology, edelsbrunner2014persistent}. The goal of this paper is to find filtrations that are different from the \v{C}ech and Vietoris-Rips filtrations, as part of a larger program of characterizing \emph{all} the possible filtration functors acting on metric spaces. 

Why do we search for alternative filtration functors when the Vietoris-Rips and \v{C}ech functors are already well-studied? A filtration can be viewed as a projection of the underlying data: we give up some knowledge to obtain a simpler representation of the data, which we then linearize (via homology) in order to obtain persistent vector spaces and readily interpretable barcodes/persistent diagrams. Different filtrations yield different projections of the original data, so visualizing the barcodes obtained from different filtrations enables us to have a more complete representation of the data. This is the main motivation behind our work.

\begin{figure}
  \begin{minipage}[c]{0.2\textwidth}
   \begin{tikzpicture}[scale=0.4]

    \filldraw[thick, fill=black!20!white](-2,-1)
    to [out=10,in=190] (1,-2.5)
    to [out=10,in=90] (1,-4) 
    to [out=-90,in=30] (-0.5,-4)
    to [out=180, in=180] (-2,-1);

    \filldraw[thick, fill=black!20!white](6,-1.5)
    to [out=-60,in=190] (6,-3)
    to [out=10,in=90] (6.5,-4)
    to [out=-100,in=-140] (3.5,-4)
    to [out=0,in=-60] (4,-2)
    to [out=120, in=120] (6,-1.5);

    \filldraw[thick, fill=black!20!white](-3.5,4.5)
    to [out=10,in=190] (-2,3)
    to [out=10,in=90] (-1.25,1.5) 
    to [out=-90,in=30] (-3.5,1.5)
    to [out=180, in=180] (-3.5,4.5);

    \filldraw[thick, fill=black!20!white](6,2.2)
    to [out=10,in=190] (7.5,4.2)
    to [out=10,in=90] (9,3.2) 
    to [out=-90,in=30] (7,1.2)
    to [out=180, in=180] (6,2.2); 
    
    \draw[->, ultra thick] (-1.2,2.8) to (6.35,2.8);
    \draw[->, ultra thick] (-3.15,1.45) to (-1.9,-0.8);
    \draw[->, ultra thick](1.5,-3) to (4,-3);
    \draw[->, ultra thick] (5.25,-1) to (6.65,1.15);
    
    \node at (-3.45,3.5){\Large $\mathcal{M}$};
    \node at (-1.75,-1.75){\Large $\mathcal{F}$};
    \node at (5,-2){\Large $\mathcal{V}$};
    \node at (7.75,3.5){\Large $\mathcal{D}$};

    \node at (2,3.55){\large $\dgm_k^{\ast}$};
    \node at (2.6,-3.7){\large $H_k$};
    \node at (7,-0.5){\large $\dgm$};
        
    \draw[->, ultra thick, color=red] (-3.15,1.45) to (-1.9,-0.8);

    \draw[->, thick, color=red] (-3.75,1.45)
    to [out=240, in=140] (-2.75,-1.2);
    
    \draw[->, thick, color=red] (-3.5,1.45)
    to [out=240, in=120] (-2.5,-1);
    
    \draw[->, thick, color=red] (-3.35,1.45)
    to [out=270, in=120] (-2.2,-0.8);
    
    \draw[->, thick, color=red] (-2.75,1.45)
    to [out=-60, in=100] (-1.5,-0.8);
    
    \draw[->, thick, color=red] (-2.5,1.4)
    to [out=-40, in=80] (-1.25,-1);

\end{tikzpicture}
  \end{minipage}\hfill
  \begin{minipage}[c]{0.7\textwidth}
   \caption{\footnotesize{The diagram shows the main processes that metric datasets undergo when applying persistent homology. 
  The  origin of such a pipeline is the space of all finite metric spaces $\M$. We want our maps to end in $\D$, the space of diagrams/barcodes. We first map to the space of all filtered spaces $\F$ using a given filtration functor. The deep-red arrow represents the usual filtration functors used (\v{C}ech or Vietoris-Rips.) We want to find as many alternative red arrows as possible. From $\F$ we map to persistent vector spaces, $\V$ via the homology functor, $\mathrm{H}_k$ (with field coefficients). From $\V$ the $\dgm$ map produces the persistence barcode. The $\dgm_k^{\ast}$ map represents the overall process that we have split into three parts.}}
  \label{fig:overview}
  \end{minipage}
\end{figure}
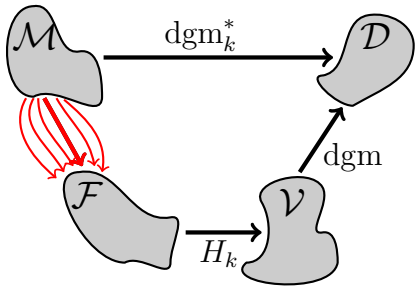

\newpage
\tableofcontents

\section{Preliminaries}
For a set $X$, we let $\pow(X)$ denote the set of all finite non-empty subsets of $X$. By $\mathcal{M}$ we will denote the collection of all finite metric spaces. The Hausdorff distance between closed subsets of a metric space will be denoted by $d_{\mathrm{H}}$. Given a finite metric space $(X,d_X)$ we consider the map $\iota_X:\pow(X)\rightarrow \mathcal{M}$ given by $\sigma\mapsto (\sigma,d_X|_{\sigma\times \sigma})$, that is $\iota_X$ takes a  subset of $X$ and endows it with the restriction of the metric of $X$.
By the \emph{diameter} $\diam(X)$ of a finite metric space $(X,d_X)$ we mean the number $\max_{x,x'\in X}d_X(x,x')$, and by the \emph{eccentricity function associated to $(X,d_X)$} we mean the function $\ecc_X:X\rightarrow \R_+$ such that $x\mapsto \max_{x'\in X}d_X(x,x').$

Throughout this article, we will use homology with field coefficients. We will write $\mathrm{H}_k$ to denote the $k$th homology functor.

Given two finite metric spaces $(X,d_X)$ and $(Y,d_Y)$ and any non-empty relation $R\subset X\times Y$ we consider its \emph{distortion} \cite{burago} given by $$\dis(R):=\max\limits_{(x,y),(x',x')\in R}|d_X(x,x')-d_Y(y,y')|.$$ A particular class of relations between sets $X$ and $Y$ is given by \emph{correspondences}: these are relations $R$ such that the canonical projection maps are surjective. We denote as $\mathcal{R}(X,Y)$ the set of all correspondences between $X$ and $Y$. The \emph{Gromov-Hausdorff distance} between $X,Y\in\mathcal{M}$ is defined as $\dgh(X,Y):=\frac{1}{2}\min\limits_{R}\dis(R)$ where $R$ ranges over all correspondences between $X$ and $Y$.

\defn[\cite{gromov}]{Given a metric space $(X,d_X)$ and a natural number $n$ consider the map $$D_X^{(n)}:X\times \cdots\times X\rightarrow \R^{n\times n}$$ given by 
$$(x_1,\ldots,x_n)\mapsto \big(d_X(x_i,x_j)\big)_{i,j=1}^n.$$

Then, the \emph{$n$-th curvature set} of $(X,d_X)$ is the set 

$$\cset_n(X):=\im(D_X^{(n)}).$$}
\begin{rem}\label{rem:functorial-cset}
Curvature sets enjoy the following kind of functoriality: suppose $X\hookrightarrow Y$ isometrically, then, for any $n\in \N$ one has 
$$\cset_n(X)\subseteq \cset_n(Y).$$
\end{rem}

It turns out that curvature sets are stable with respect to the Gromov-Hausdorff distance. In fact, curvature sets of order $n$ are subsets of $\R^{n\times n}$, which we endow with the $\ell_\infty$ distance. We stress that we will be considering finite subsets of $\R^{n\times n}$. For the rest of this paper, when we write $\pow(\R^{n\times n})$ or $\pow(\R)$ we are always referring to the space of all finite subsets of $\R^{n\times n}$ and $\R$, respectively. Then, we have 
\begin{thm}[\cite{dgh-props}]\label{thm:cset}
For any pair of compact metric spaces, and all $n\in\N$,  $$d_{\mathrm{H}}^{(\R^{n\times n}, \ell_\infty)}(\cset_n(X),\cset_n(Y))\leq 2 \cdot\dgh(X,Y).$$
\end{thm}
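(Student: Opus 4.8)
The plan is to unwind the definitions: for an (almost) optimal correspondence $R\in\mathcal{R}(X,Y)$, I will match each distance matrix in $\cset_n(X)$ with one in $\cset_n(Y)$ that differs from it by at most $\dis(R)$ in the $\ell_\infty$ norm, and conversely; taking the infimum over correspondences then produces the bound $2\,\dgh(X,Y)$.

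First I would record the necessary bookkeeping. Since $X$ and $Y$ are compact, $X\times\cdots\times X$ ($n$ factors) is compact and the map $D_X^{(n)}$ is continuous (each entry $(x_1,\ldots,x_n)\mapsto d_X(x_i,x_j)$ is), so $\cset_n(X)=\im(D_X^{(n)})$ is a compact — in particular closed — subset of $(\R^{n\times n},\ell_\infty)$, and likewise for $Y$; hence $\dH^{(\R^{n\times n},\ell_\infty)}(\cset_n(X),\cset_n(Y))$ is well defined. For compact spaces the Gromov--Hausdorff distance is realized by some $R^\ast\in\mathcal{R}(X,Y)$ with $\dis(R^\ast)=2\,\dgh(X,Y)$; alternatively one fixes $\varepsilon>0$, picks $R\in\mathcal{R}(X,Y)$ with $\dis(R)\le 2\,\dgh(X,Y)+\varepsilon$, and lets $\varepsilon\downarrow 0$ at the end.

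Now the main step. Fix such a correspondence $R$ and an arbitrary point $A=D_X^{(n)}(x_1,\ldots,x_n)\in\cset_n(X)$. Because the first projection of $R$ is surjective, for each $i\in\{1,\ldots,n\}$ there is $y_i\in Y$ with $(x_i,y_i)\in R$; set $B:=D_Y^{(n)}(y_1,\ldots,y_n)\in\cset_n(Y)$. Then, entrywise,
$$\|A-B\|_{\ell_\infty}=\max_{1\le i,j\le n}\bigl|d_X(x_i,x_j)-d_Y(y_i,y_j)\bigr|\le\dis(R),$$
since each pair $\bigl((x_i,y_i),(x_j,y_j)\bigr)$ lies in $R$. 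Thus $\sup_{A\in\cset_n(X)}\inf_{B\in\cset_n(Y)}\|A-B\|_{\ell_\infty}\le\dis(R)$; interchanging the roles of $X$ and $Y$ (using surjectivity of the other projection) gives the symmetric inequality, so $\dH^{(\R^{n\times n},\ell_\infty)}(\cset_n(X),\cset_n(Y))\le\dis(R)$. Taking the infimum over $R\in\mathcal{R}(X,Y)$ (or letting $\varepsilon\downarrow 0$) yields the claimed inequality $\dH^{(\R^{n\times n},\ell_\infty)}(\cset_n(X),\cset_n(Y))\le 2\,\dgh(X,Y)$.

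I do not expect a genuine obstacle here: the estimate is a direct consequence of the definitions of distortion, correspondence, and curvature set. The only points needing mild care are the topological preliminaries — compactness of $\cset_n$, so that the Hausdorff distance between these possibly infinite sets is legitimate — and the existence of an optimal or near-optimal correspondence; both are standard facts about compact metric spaces and the Gromov--Hausdorff distance.
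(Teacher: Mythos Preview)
Your argument is correct and is exactly the standard proof of this fact. Note, however, that the paper does not actually supply its own proof of this theorem: it is stated with a citation to \cite{dgh-props} and then used as a black box (e.g.\ in the proof of Theorem~\ref{lstablevaluation}), so there is nothing in the paper to compare your approach against.
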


\subsection{Filtered spaces and their persistence diagrams}

Although curvature sets define a lower bound on the Gromov-Hausdorff distance, they fall short in terms of being a way of understanding the underlying spaces. They lack clear interpretability since, as $n\in\N$ grows, $\cset_n(\cdot)$ is embedded in a high-dimensional space. We want to find other invariants of spaces that can be visualized and interpreted, and at the same time provide bounds on Gromov-Hausdorff distance. The persistence diagrams of Vietoris-Rips and \v{C}ech filtration functors offer both a bound and an interpretation. To capture this notion, we craft a general definition of filtrations on spaces.

\defn{ Let $X$ be a finite set. A \emph{filtration on $X$} is any map $\phi_{X}:\pow(X) \rightarrow \R_+$ that satisfies the monotonicity condition: 
$$\phi_X(\tau)\leq \phi_X(\sigma), \hspace{1cm} \forall \, \, \tau\subset \sigma\subset X. $$ 
Any pair $(X,\phi_X)$ where $X$ is a finite set and $\phi_X$ is a filtration over $X$ will be called a \emph{filtered space}. The collection of all such pairs will be denoted by $\mathcal{F}.$
}

To interpret this definition, we will consider the following. Let $X\in \M$ and $\Phi_X$ be a filtration on $X$. For all $t\in \R_+$, we define the following:
$$\Phi_X[t]= \{ \sigma\subset X: \Phi_X(\sigma)\leq t\}.$$
We call $\Phi_X[t]$ the filtration space at time $t$. 

The monotonicity condition is required for $\Phi_X [t]$ to be a simplicial complex at any $t\geq 0$. As $t \rightarrow \infty$, this complex $\Phi_X[t]$ grows until it becomes the full simplex $\pow(X)$.


We start defining an empty simplicial complex (no simplicies in it). We will add simplicies to it through time. This simplicial complex will have the points of $X$ as its 0-simplicies. The function $\phi_X$ is giving us the time of arrival of any simplex $\sigma\subset X$ into the space. The monotonicity condition is required for all faces of any simplex to be added before the simplex. This definition of filtrated spaces is restricted to growing simplicial complexes. 


A filtered space gives rise to a persistence diagram in the manner that we describe now. We start with a filtration $\{\Phi_X[t] \subseteq \Phi_X[t']\}_{0 \leq t \leq t'}$. Applying the $k$th homology functor $\mathrm{H}_k: \F \rightarrow \V$ transforms this filtration along with its inclusion maps into a sequence of vector spaces with linear maps, i.e. a \emph{persistent vector space}.

More generally, a persistent vector space is defined to be a family of vector spaces $\{U^\delta \xrightarrow{\mu_{\delta ,\delta'}} U^{\delta'}\}_{\delta \leq \delta'\in \R}$ such that: (1) $\mu_{\delta,\delta}$ is the identity for each $\delta \in \R$, and (2) $\mu_{\delta,\delta''} = \mu_{\delta',\delta''}\circ \mu_{\delta,\delta'}$ for each $\delta \leq \delta' \leq \delta'' \in \R$. To each persistent vector space, it is possible to associate a full invariant called a \emph{persistence diagram} or \emph{persistence barcode} \cite{carlsson2005persistence}. We let $\dgm: \V\rightarrow \D$ denote the \emph{diagram map} that maps a persistent vector space to its barcode.

\subsection{Filtrations and their stability}
It is possible to define a distance $d_\mathcal{F}$ between filtered spaces as follows \cite{tripods}: given $(X,\phi_X)$ and $(Y,\phi_Y)$ in $\mathcal{F}$ let

$$d_{\mathcal{F}}((X,\phi_X),(Y,\phi_Y)):= \inf_{Z,\pi_X,\pi_Y} \max_{\sigma \subset Z} |\phi_X(\pi_X(\sigma)) - \phi_Y(\pi_Y(\sigma ))|, $$
where $Z$ is any finite set $\pi_X$ and $\pi_Y$ are surjective maps from $Z$ to $X$ and $Y$, respectively. There is a result that can help us with the notion of stability:

\begin{thm}[\cite{tripods}]\label{thm:tripods}
For all finite filtered spaces $(X,\phi_X)$ and $(Y,\phi_Y)$, and all $k\in\mathbb{N}$ one has $$ \dB( \dgm_k^{\phi_X}(X) , \dgm_k^{\phi_Y}(Y) )\leq d_{\mathcal{F}}((X,\phi_X), (Y,\phi_Y)).$$
\end{thm}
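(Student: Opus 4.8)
The plan is to reduce the statement to the algebraic stability theorem for persistence modules (interleaved modules have bottleneck-close diagrams) by converting a tripod that nearly realizes $d_{\mathcal{F}}$ into an interleaving of the two persistent vector spaces. Fix $\varepsilon > d_{\mathcal{F}}((X,\phi_X),(Y,\phi_Y))$ and choose a finite set $Z$ together with surjections $\pi_X\colon Z\to X$ and $\pi_Y\colon Z\to Y$ with $|\phi_X(\pi_X(\sigma))-\phi_Y(\pi_Y(\sigma))|\le\varepsilon$ for every $\sigma\subset Z$. The first move is to pull the filtrations back to $Z$: set $\psi_X:=\phi_X\circ\pi_X$ and $\psi_Y:=\phi_Y\circ\pi_Y$ on $\pow(Z)$. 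Since $\sigma\mapsto\pi_X(\sigma)$ is monotone with respect to inclusion, monotonicity of $\phi_X$ (resp.\ $\phi_Y$) makes $\psi_X$ (resp.\ $\psi_Y$) a filtration on $Z$, and by construction $\|\psi_X-\psi_Y\|_\infty\le\varepsilon$.

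Next I would exploit this pointwise bound. For every $t\ge 0$ it gives inclusions of simplicial complexes $\psi_X[t]\subseteq\psi_Y[t+\varepsilon]\subseteq\psi_X[t+2\varepsilon]$ together with the symmetric chain, all compatible with the structure inclusions of the two filtrations. Applying $\mathrm{H}_k$ produces an $\varepsilon$-interleaving between the persistent vector spaces $\mathrm{H}_k(\psi_X[\,\cdot\,])$ and $\mathrm{H}_k(\psi_Y[\,\cdot\,])$; since $Z$ is finite these are pointwise finite-dimensional and constructible, so the algebraic stability theorem yields $\dB(\dgm_k^{\psi_X}(Z),\dgm_k^{\psi_Y}(Z))\le\varepsilon$.

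The remaining task is to identify $\dgm_k^{\psi_X}(Z)$ with $\dgm_k^{\phi_X}(X)$ (and symmetrically for $Y$), i.e.\ to show the pullback does not change persistent homology. Here I would pick a section $s\colon X\to Z$ of $\pi_X$ (possible as $\pi_X$ is onto and $X$ is finite). Using $\psi_X[t]=\{\sigma\subset Z:\pi_X(\sigma)\in\phi_X[t]\}$ and $\pi_X\circ s=\mathrm{id}_X$, one checks that for each $t$ the maps $\pi_X$ and $s$ induce simplicial maps $\psi_X[t]\to\phi_X[t]$ and $\phi_X[t]\to\psi_X[t]$ that commute with all structure inclusions, that $\pi_X\circ s=\mathrm{id}$ on $\phi_X[t]$, and that $s\circ\pi_X$ is contiguous to $\mathrm{id}_Z$ on each $\psi_X[t]$ — uniformly in $t$ — because $\pi_X\big((s\circ\pi_X)(\sigma)\cup\sigma\big)=\pi_X(\sigma)$ for $\sigma\in\psi_X[t]$. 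Hence $(\pi_X)_\ast$ and $s_\ast$ are mutually inverse isomorphisms on homology, natural in $t$, so they assemble into an isomorphism of persistent vector spaces $\mathrm{H}_k(\psi_X[\,\cdot\,])\cong\mathrm{H}_k(\phi_X[\,\cdot\,])$; in particular the diagrams coincide. (Intuitively, $\psi_X[t]$ is obtained from $\phi_X[t]$ by replacing each vertex $x$ with its whole fiber $\pi_X^{-1}(x)$, all copies sharing the same star, which does not change homotopy type.)

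Putting the pieces together gives $\dB(\dgm_k^{\phi_X}(X),\dgm_k^{\phi_Y}(Y))\le\varepsilon$ for every $\varepsilon>d_{\mathcal{F}}((X,\phi_X),(Y,\phi_Y))$, and letting $\varepsilon$ decrease to $d_{\mathcal{F}}$ finishes the proof. I expect the only genuinely delicate point to be the \emph{naturality in $t$} of the equivalence $\psi_X[t]\simeq\phi_X[t]$: a level-by-level isomorphism of homology groups would not suffice, since one needs compatibility with the persistence maps, and this is precisely why one should fix a single section $s$ of $\pi_X$ rather than pick ad hoc retractions scale by scale. The other ingredients — monotonicity of pullbacks, the interleaving inequalities, and the invocation of algebraic stability — are routine.
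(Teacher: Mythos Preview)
Your argument is correct and follows what is essentially the standard route: pull both filtrations back along the tripod to a common vertex set, use the $\ell_\infty$ bound there to get an $\varepsilon$-interleaving of the sublevel filtrations, invoke algebraic stability, and then identify the pulled-back persistence modules with the original ones via a fixed section and a contiguity argument. The care you take with naturality in $t$ (fixing a single section $s$ rather than levelwise retractions) is exactly the right point to flag.

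There is, however, nothing in the paper to compare against: Theorem~\ref{thm:tripods} is quoted from \cite{tripods} and is used as a black box throughout; the paper never supplies its own proof. So your write-up is not an alternative to the paper's argument but a self-contained proof of a cited result. If you want to align with the original source, the proof in \cite{tripods} proceeds along the same lines you sketch (tripod pullback, $\ell_\infty$ control on a common parameter set, then stability of persistence modules), so your approach is not merely correct but also the expected one.
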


\subsection{Filtration functors and their stability}

Theorem \ref{thm:tripods} provides a nice stability result between the category of filtered spaces and the category of diagrams. Observe from Figure \ref{fig:overview} that the complementary portion of the persistent homology pipeline consists of the filtration maps from finite metric spaces to filtered spaces. These filtration maps are the focus of our work. We want to both create new and rich families of filtration maps, and find stability results between the category of filtered spaces and the category of finite metric spaces. Towards this end, let us consider the following definitions:

\defn{\label{def:filtfunct}
A \emph{filtration functor} is a map $\Phi: \M\rightarrow \F$ such that for every $X\in\mathcal{M}$, $\Phi_X$ is a filtration over $X$.}

The sense of functoriality may change depending on the category we want to consider on $\M$. We can first consider $\M^{\text{Lip}}$ to be the category of finite metric spaces with 1-Lipschitz maps between them. Later on, we will develop filtrations that are not functorial with respect to 1-Lipschitz maps, but which are nonetheless functorial with respect to isometries.\footnote{See \cite{clust-um} for similar categories appearing in work related to classifying clustering functors.} 

\defn{
We say that a filtration functor $\Phi$ is functorial on $\M^{\text{Lip}}$ if for all pairs $(X,d_X),(Y,d_Y)\in \M$ and any 1-Lipschitz map $h: X\rightarrow Y$, $\Phi_X(\sigma)\geq \Phi_Y(h(\sigma))$ for all $\sigma\subseteq X$.\\

Functoriality can be thought in the following terms. Let $X,Y\in\M$ and $h:X\rightarrow Y$ be a 1-Lipschitz map. For each $t\in \R_+$ the function $h$ induces a map between the filtered spaces $h:\Phi_X[t] \rightarrow \Phi_Y[t]$. From the functoriality condition, it can be shown that $h$ is a simplicial map for all $t\in \R_+$.

We now reframe the well known \v{C}ech and Vietoris-Rips filtration functors in terms of this new definition.

\begin{ex}
We define the \v{C}ech filtration functor to be a function given by,
$$C:\M\rightarrow \F$$
$$ X\in\M \mapsto C_X,$$
where $C_X: \pow(X)\rightarrow \R$ is defined by,
$$ C_X(\sigma):= \min_{p\in X} \max_{x\in \sigma} d_X(p,x).$$
\end{ex}

\begin{ex}
We define the Vietoris-Rips filtration functor to be $ R:\M\rightarrow \F$ defined as, $X\mapsto R_X$, where,
$$ R_X(\sigma):= \diam(\iota_X(\sigma)),  \hspace{1cm} \forall \, X\in \M, \, \sigma\subset X.$$
\end{ex}

In the most basic case, a filtration functor can be any rule that assigns to each finite metric space a filtration. These choices on each space are not required to satisfy any condition with respect to other spaces. In order to find relationships between the diagrams we obtain through filtration functors, we have to impose a regularity condition on them. 

\defn{We say that a filtration functor $\Phi:\M\rightarrow \mathcal{F}$ is \emph{stable} if there exists $L\geq 0$ such that:
$$ d_{\mathcal{F}}((X,\Phi_X),(Y,\Phi_Y))\leq L\cdot \dgh(X,Y),$$ for all 
$X,Y\in\M.$ If the above condition holds for a given $L\geq 0$ we will call $\Phi$ a \emph{$L$-stable filtration functor.}
}

Given any filtration functor $\Phi$ and $k\in\N$, by $\dgm_k^\Phi$ we will denote the composite map $$\dgm\circ \mathrm{H}_k\circ \Phi:\mathcal{M}\rightarrow \mathcal{D}.$$

Combining the above one has the following: 
\begin{cor}[\cite{tripods}]
Let $L\geq 0$ and $\Phi:\mathcal{M}\rightarrow \mathcal{F}$ be any $L$-stable filtration functor. Then, for all $k\in \N$ and all $X,Y\in\mathcal{M}$,
$$d_{\mathrm{B}}\big(\dgm_k^\Phi(X),\dgm_k^\Phi(Y)\big)\leq L\cdot \dgh(X,Y).$$
\end{cor}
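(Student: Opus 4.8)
The plan is to simply chain together the two ingredients already at hand: the defining inequality of an $L$-stable filtration functor and the stability estimate of Theorem~\ref{thm:tripods}. First I would fix $k\in\N$ and $X,Y\in\M$, and apply the definition of $L$-stability directly to the pair $X,Y$, which yields
$$d_{\mathcal{F}}\big((X,\Phi_X),(Y,\Phi_Y)\big)\leq L\cdot \dgh(X,Y).$$

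Next I would observe that $(X,\Phi_X)$ and $(Y,\Phi_Y)$ are themselves finite filtered spaces (this is exactly what Definition~\ref{def:filtfunct} guarantees), so Theorem~\ref{thm:tripods} applies to them and gives
$$\dB\big(\dgm_k^{\Phi_X}(X),\dgm_k^{\Phi_Y}(Y)\big)\leq d_{\mathcal{F}}\big((X,\Phi_X),(Y,\Phi_Y)\big).$$
Here I would note the bookkeeping point that, by the definition of $\dgm_k^\Phi$ as the composite $\dgm\circ\mathrm{H}_k\circ\Phi$, we have $\dgm_k^\Phi(X)=\dgm_k^{\Phi_X}(X)$ and likewise for $Y$, so the left-hand side is precisely $\dB(\dgm_k^\Phi(X),\dgm_k^\Phi(Y))$.

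Finally I would combine the two displayed inequalities by transitivity to conclude $\dB(\dgm_k^\Phi(X),\dgm_k^\Phi(Y))\leq L\cdot\dgh(X,Y)$, as desired. There is no real obstacle here: the statement is a formal corollary obtained by composing the filtered-space stability of Theorem~\ref{thm:tripods} with the hypothesis of $L$-stability, and the only thing requiring any attention is matching the notation $\dgm_k^\Phi$ against $\dgm_k^{\phi_X}$ used in Theorem~\ref{thm:tripods}.
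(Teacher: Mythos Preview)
Your proposal is correct and matches the paper's own treatment: the paper does not give a separate proof but simply introduces the corollary with the phrase ``Combining the above one has the following,'' meaning exactly the chaining of the $L$-stability inequality with Theorem~\ref{thm:tripods} that you describe. There is nothing to add.
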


\section{A theorem about existence of rich families}

One could try to compute the Gromov-Hausdorff distance by finding a filtration functor $\Phi$ such that the map $\dgm^{\Phi}_k$ is an isometric embedding of $\M$ into $\D$. This would mean that the Gromov-Hausdorff distance between two particular spaces equals the bottleneck distance between their diagrams. Such a filtration does not exist when we restrict our scope to filtrations sharing properties with Rips or \v{C}ech. By this, we mean we will consider filtrations such that all 0-simplexes will be added at time zero.

\begin{prop}
 Let $\Phi$ be any non-trivial  $1$-stable filtration functor such that,
	$$ \Phi_X(\{x\})= 0, \hspace{1cm} \forall\, x\in X,\,\forall X\in\mathcal{M}.$$ 
 Let $k\in\N$. Then, there exist two different finite metric spaces $X$ and $Y$ such that $$d_\mathrm{B}(\dgm_k^\Phi(X),\dgm_k^\Phi(Y))<\dgh(X,Y).$$
\end{prop}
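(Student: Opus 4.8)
The plan is to exhibit, for \emph{every} such $\Phi$ and every $k\in\N$, one universal witnessing pair, and my claim is that the one-point space against a two-point space already works. Fix $\delta>0$; let $X=(\{p\})$ be the one-point metric space and $Y=(\{q_1,q_2\},d_Y)$ with $d_Y(q_1,q_2)=\delta$. These are not isometric, and the only correspondence between them, $R=\{(p,q_1),(p,q_2)\}$, has $\dis(R)=\delta$, so $\dgh(X,Y)=\delta/2$. I will show $\dB(\dgm_k^\Phi(X),\dgm_k^\Phi(Y))\le \delta/4$ for every $k$, which suffices since $\delta/4<\delta/2$. For $k\ge 1$ this needs nothing: at each scale $t$ the complexes $\Phi_X[t]$ and $\Phi_Y[t]$ live on at most two vertices, hence are empty, two points, or contractible, so $\mathrm H_k$ of each filtration is the zero persistent vector space and the bottleneck distance is $0$. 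Thus the content is the case $k=0$.

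For $k=0$ put $w:=\Phi_Y(\{q_1,q_2\})\ge 0$. Since $\Phi_X(\{p\})=\Phi_Y(\{q_1\})=\Phi_Y(\{q_2\})=0$, reading off the filtered complexes gives $\dgm_0^\Phi(X)=\{(0,\infty)\}$ and $\dgm_0^\Phi(Y)=\{(0,\infty),(0,w)\}$ (two components born at $0$, merging at time $w$). Matching the two essential classes and sending $(0,w)$ to the diagonal shows $\dB(\dgm_0^\Phi(X),\dgm_0^\Phi(Y))=w/2$. Hence everything reduces to the single estimate $w\le\delta/2$.

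To prove this I use the definition of $d_\F$ together with $1$-stability, without invoking Theorem~\ref{thm:tripods}. Let $(Z,\pi_X,\pi_Y)$ be an arbitrary tripod, i.e.\ $Z$ a finite set with surjections $\pi_X\colon Z\to X$ and $\pi_Y\colon Z\to Y$. Choose $z_1\in\pi_Y^{-1}(q_1)$ and $z_2\in\pi_Y^{-1}(q_2)$ and set $\sigma=\{z_1,z_2\}\subseteq Z$; then $\pi_Y(\sigma)=\{q_1,q_2\}$ while $\pi_X(\sigma)=\{p\}$, so the term of $\sigma$ in the definition of $d_\F$ equals $|\Phi_Y(\{q_1,q_2\})-\Phi_X(\{p\})|=w$. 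Therefore $\max_{\tau\subseteq Z}\,|\Phi_X(\pi_X(\tau))-\Phi_Y(\pi_Y(\tau))|\ge w$ for every tripod, so $d_\F((X,\Phi_X),(Y,\Phi_Y))\ge w$. By $1$-stability, $d_\F((X,\Phi_X),(Y,\Phi_Y))\le \dgh(X,Y)=\delta/2$, giving $w\le\delta/2$ and hence $\dB(\dgm_0^\Phi(X),\dgm_0^\Phi(Y))=w/2\le\delta/4<\delta/2=\dgh(X,Y)$.

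The only real step is the estimate $w\le\delta/2$, and this is exactly where $1$-stability is used: it expresses the ``factor of two'' by which a distance-type critical value (two points a distance $\delta$ apart are glued by time $\le\delta/2$) differs from a diameter-type one (Vietoris--Rips glues them only at time $\delta$, which is why Vietoris--Rips is merely $2$-stable and is not covered by the proposition). I note that the argument uses neither non-triviality of $\Phi$ nor Theorem~\ref{thm:tripods}; non-triviality simply rules out the degenerate case in which every $\dgm_k^\Phi$ is the trivial diagram, where the statement still holds but vacuously.
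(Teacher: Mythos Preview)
Your proof is correct and is more direct than the paper's. For $k\ge 1$ the two arguments coincide (one point versus two points, both diagrams empty). For $k=0$ the paper argues by contradiction: it assumes $\dB(\dgm_0^\Phi(X),\dgm_0^\Phi(Y))=\dgh(X,Y)$ for \emph{all} pairs, uses this on the family $(X_r,\ast)$ to force the finite bar of the two-point space $X_r$ to have length exactly $r$, and then derives a contradiction by comparing $X_1$ with $X_{1+\varepsilon}$ (where the bottleneck distance comes out to $\varepsilon$ while $\dgh=\varepsilon/2$). You instead stay with the single pair $(\ast,Y)$ and bound the bar length $w$ directly: your tripod computation shows $d_\F((\ast,\Phi_\ast),(Y,\Phi_Y))\ge w$, and $1$-stability then gives $w\le\delta/2$, hence $\dB=w/2\le\delta/4<\delta/2$. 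This is sharper in that it produces an explicit witnessing pair with a quantitative gap, and it isolates exactly where the hypothesis of $1$-stability (rather than $2$-stability) enters; your closing remark that Vietoris--Rips has $w=\delta$ and hence attains equality on this pair is a nice sanity check. One minor comment: in your final sentence, the statement is not ``vacuous'' for a trivial $\Phi$---it is still true and nontrivially so, since $\dB=0<\dgh$ for any nonisometric pair---so your argument in fact shows the hypothesis of nontriviality is superfluous.
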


\begin{proof}
We divide the proof into two cases: $k\geq 1$ and $k=0$.

\noindent
Let $k\geq 1$ and $\Phi:\M \rightarrow \F$ be a $1-$stable filtration functor such that:
$$ \dgh(X,Y)= \dB(\dgm^\Phi_k(X) , \dgm^\Phi_k(Y)), \hspace{1cm} \forall \, X,Y \in \M.$$
It follows that for $X\in \M$, if $*\in \M$ represents the single point metric space, then
$$\frac{\diam(X)}{2}= \dgh(X,*)= \dB(\dgm^\Phi_k(X) , \dgm^\Phi_k(*)).$$
Since $*\in \M$ has just one point and $k\geq 1$, we see $\dgm^\Phi_k(*)=\{\emptyset\}$ by the fact that a simplicial complex with just one point has no homology of dimension greater than 0. This claim is also true for a two-point space. Then, if $X\in \M$ is the two-point space with distance 1 between its points:
$$\frac{1}{2}= \dgh(*,X)= \dB(\dgm^\Phi_k(*),\dgm^\Phi_k(X))=\dB(\emptyset,\emptyset)= 0. $$
This is a contradiction, so such a $\Phi$ does not exist.

\noindent
For $k=0$, let $\Phi$ be a filtration functor such that,
	$$\dgh(X,Y)= \dB(\dgm^{\Phi}_0(X),\dgm^{\Phi}_0(Y)),\hspace{1cm} \forall \, X,Y\in \M. $$
Let $r\geq 0$. Let $X_r=\{0,r\}\subset \R$, be the space of two points at distance $r$ from each other. We know that $\Phi_{X_r}(\{0\})=\Phi_{X_r}(\{r\})= 0$. Then, there exists $f(r)>0$ such that,
$$\dgm^{\Phi}_0(X_r)= \{[0,\infty), [0,f(r) ) \}.$$
Also, it is clear that $\dgm^{\Phi}_0(*)=\{[0,\infty)\}.$
From this, we can see that,
\begin{eqnarray*}
\dgh(X_r,*) &=&\dB(\dgm^{\Phi}_0(X),\dgm^{\Phi}_0(*) )\\
    &=& \dB(\{[0,\infty), [0,f(r))\}, \{[0,\infty) \}) \\
    &=& \frac{f(r)}{2}.
\end{eqnarray*}
We know that $\dgh(X,*)=\frac{\diam(X)}{2}$ for all spaces $X\in \M$. From this, we can see that $\dgh(X_r,*)=\frac{r}{2}$. It follows that $f(r)= r$.
Now, if $0< \varepsilon < 1$, we can see that, since $X_{1}$ and $X_{1+\varepsilon}$ are homothetical spaces \cite[Example 3.3]{dgh-props},
$$\dgh(X_{1},X_{1+\varepsilon})= \frac{\varepsilon }{2}. $$
But it also follows that,
\begin{eqnarray*}
	\dB(\dgm^{\Phi}_0(X_1),\dgm^{\Phi}_0(X_{1+\varepsilon}))&=& \dB(\{[0,\infty), [0,1)\},\{[0,\infty), [0,1+\varepsilon)\})\\
    &=& \varepsilon\neq \frac{\varepsilon}{2}.
\end{eqnarray*}
We conclude from this contradiction that such filtration functor $\Phi$ does not exist. 
\end{proof}

We maintain our motivation of finding a filtration functor for which the bottleneck distance between corresponding barcodes of any two given finite metric spaces attains the Gromov-Hausdorff distance between them. This has been unsuccessful when considering any single filtration, so instead we ask: is it possible to find a \emph{family} $(\Phi_\alpha)_{\alpha}$ of $1$-stable filtration functors such that for every pair of spaces $X,Y\in \M$ and $\epsilon>0$ there is a functor $\Phi_\alpha$ in this family such that this particular functor attains the Gromov-Hausdorff distance (up to an error of magnitude at most $\epsilon$) through the bottleneck distance? This idea is developed in the following theorem. Furthermore, we ask and succeed in proving that this is true only when considering barcodes corresponding to $k=0$.

\begin{thm}
There exists $\mathfrak{F}$, a family of $1$-stable filtration functors such that for all $X,Y \in \M$, we have 
$$\dgh(X,Y) = \sup\limits_{\Phi \in \mathfrak{F}}\dB\big(\dgm_0(\Phi_X),\dgm_0(\Phi_Y)\big).$$
\end{thm}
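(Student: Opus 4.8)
The plan is to bypass intricate ``local'' constructions entirely and realize $\dgh$ through a family of \emph{constant} filtration functors indexed by reference spaces. For each $W\in\M$, let $\Phi^{W}$ be the filtration functor that assigns to every simplex the same birth time, namely the Gromov-Hausdorff distance of the ambient space to $W$:
$$\Phi^{W}_X(\sigma):=\dgh(X,W),\qquad \sigma\in\pow(X),\ X\in\M.$$
Since $\Phi^{W}_X$ is constant on $\pow(X)$, the monotonicity condition holds automatically, so each $\Phi^{W}$ is a bona fide filtration functor (moreover functorial with respect to isometries, though not with respect to general $1$-Lipschitz maps). The proposed family is
$$\mathfrak F:=\{\Phi^{W}: W\in\M\},$$
indexed if one wishes by isometry classes so that $\mathfrak F$ is a set.

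First I would check that every $\Phi^{W}$ is $1$-stable by computing $d_{\mathcal{F}}$ outright. For any finite set $Z$ with surjections $\pi_X,\pi_Y$ onto $X,Y$ and any nonempty $\sigma\subseteq Z$, one has $|\Phi^{W}_X(\pi_X(\sigma))-\Phi^{W}_Y(\pi_Y(\sigma))|=|\dgh(X,W)-\dgh(Y,W)|$, a quantity independent of $\sigma$ and of $Z$; hence $d_{\mathcal{F}}((X,\Phi^{W}_X),(Y,\Phi^{W}_Y))=|\dgh(X,W)-\dgh(Y,W)|\leq\dgh(X,Y)$ by the triangle inequality for $\dgh$. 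Next I would compute the persistence diagram. Writing $a:=\dgh(X,W)$, the complex $\Phi^{W}_X[t]$ is empty for $t<a$ and is the full simplex on the vertex set $X$ for $t\geq a$; since the full simplex is connected, the persistent vector space $\{\mathrm{H}_0(\Phi^{W}_X[t])\}_t$ vanishes below $a$ and is one-dimensional with identity structure maps above $a$. Therefore $\dgm_0(\Phi^{W}_X)=\{[\dgh(X,W),\infty)\}$, a single essential bar. Two such diagrams have their unique points at infinity, so an optimal bottleneck matching must pair those points with each other (the diagonal is unavailable to essential classes), giving
$$\dB\big(\dgm_0(\Phi^{W}_X),\dgm_0(\Phi^{W}_Y)\big)=|\dgh(X,W)-\dgh(Y,W)|.$$

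It remains to assemble the bounds. For every $W$ the reverse triangle inequality gives $|\dgh(X,W)-\dgh(Y,W)|\leq\dgh(X,Y)$, so $\sup_{\Phi\in\mathfrak F}\dB(\dgm_0(\Phi_X),\dgm_0(\Phi_Y))\leq\dgh(X,Y)$; and taking $W=Y$ yields $|\dgh(X,Y)-\dgh(Y,Y)|=\dgh(X,Y)$, so the supremum is attained and the identity follows. There is no computationally hard step; the content lies in the modeling choice, and the point one must get past is the role of the \emph{essential} bar. Naive variants --- putting only the \emph{edges} at height $\dgh(X,W)$ while keeping vertices at $0$, or rescaling a Vietoris-Rips-type functor --- either lose a factor of $2$ (a finite bar can be matched to the diagonal at half its length) or retain only single-linkage information; forcing the \emph{entire} complex, vertices included, to appear at a single time is exactly what makes the discrepancy $|\dgh(X,W)-\dgh(Y,W)|$ survive undiminished in $\dB$. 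Finally, this is inherently a degree-$0$ phenomenon: each $\Phi^{W}_X[t]$ is contractible once nonempty, so $\dgm_k(\Phi^{W}_X)=\emptyset$ for all $k\geq1$, consistent with the restriction to $k=0$ in the statement.
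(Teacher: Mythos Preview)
Your argument is correct and follows essentially the same route as the paper: both construct, for each reference space $W\in\M$, the constant filtration functor $\Phi^{W}_X(\sigma)=\dgh(X,W)$, read off the single essential $0$-bar $[\dgh(X,W),\infty)$, and conclude via the reverse triangle inequality together with the choice $W=Y$. The only cosmetic difference is that you verify $1$-stability directly at the level of $d_{\mathcal F}$ (which is what the definition actually demands), whereas the paper checks the inequality only at the level of $\dB$; your version is therefore slightly more careful on this point.
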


\begin{rem}
Note that the theorem indicates that in order to fully capture geometric dissimilarity between finite metric spaces it is enough to only consider the case of $0$-dimensional barcodes. We however remark that this result is merely an existence result: it does not offer guidance in terms of how the family $\mathfrak{F}$ should be chosen. 
\end{rem}

\begin{proof}
Consider the family $\mathfrak{F}=\{\Upsilon^{(Z)}:\mathcal{M}\rightarrow \mathcal{F},\,Z\in\mathcal{M}\}$ of filtration functors indexed by $Z\in\mathcal{M}$. In other words, our family contains a filtration functor $\Upsilon^{(Z)}$ for each $Z\in \mathcal{M}$. For each $Z\in\M$ we  define the  filtration functor $\Upsilon^{(Z)}:\M\rightarrow \F$ given by $X\mapsto \Upsilon^{(Z)}_X$ where,
$$ \Upsilon^{(Z)}_X(\sigma) := \dgh(X,Z), \hspace{1cm} \forall \sigma\subset X \in \M. $$

This filtration is constant on simplices and at the simplicial level yields an empty simplicial complex on the interval $[0,\dgh(X,Z))$ and the complete simplicial complex, i.e. the power set of $X$ in the interval $[\dgh(X,Z),\infty)$. Thus, $$\dgm_0\big(\Upsilon^{(Z)}_X\big)=\left\{[\dgh(X,Z),\infty)\right\}.$$
Thus, $$\dB\big(\dgm_0(\Upsilon^{(Z)}_X),\dgm_0(\Upsilon^{(Z)}_Y)\big)=|\dgh(X,Z)-\dgh(Y,Z)|.$$ 

From this, and the triangle inequality on $\M$ it follows that,
$$\dB\big(\dgm_0(\Upsilon^{(Z)}_X),\dgm_0(\Upsilon^{(Z)}_Y)\big)=|\dgh(X,Z)-\dgh(Y,Z)|\leq \dgh(X,Y),$$
for all $X,Y,Z \in \M$ which proves that all functors in $\mathfrak{F}$ are $1$-stable.

Since $\dgh(Y,Y)=0$, it follow that 
$$ \dB(\dgm_0(\Upsilon^{(Y)}_X),\dgm_0(\Upsilon^{(Y)}_Y))= \dgh(X,Y) \hspace{1cm} \forall X,Y \in \M.$$
We conclude that,
$$\dgh(X,Y)= \sup_{Z\in\M}\dB\big(\dgm_0(\Upsilon^{(Z)}_X),\dgm_0(\Upsilon^{(Z)}_Y)\big) \hspace{1cm} \forall X,Y \in \M.$$
\end{proof}

\begin{rem}
See \cite[Theorem 15]{Frosini2016} for a related result in the category of topological spaces.
\end{rem}

So now we have a family that satisfies the desired property. The problem we face is that the construction shown needs previous knowledge of the Gromov-Hausdorff distance. This indicates that the family we built in the proof is not useful in terms of computations. The theorem does, however, guarantee the existence of families of this nature. Such a theorem suggests identifying rich families of stable filtration functors.

\section{Local and Global Filtration Functors}\label{sec:valuations}

\subsection{Valuations}
The $n-$th curvature set of a metric space contains all the metric information of $n$-tuples of points in the space. We can use this information to decide the time of arrival of a simplex in a filtration. If we want to use information from $\cset_n$, what we now need is a rule that assigns a real number to elements of $\pow(\R^{n\times n})$.
This can be done via the notion of valuations:

\defn[Valuation]{\label{def:valuations} Given $n\in\mathbb{N}$,  an $n$-\emph{valuation} is any map $\nu_n:\pow(\R^{n\times n})\rightarrow \R_+$ such that $\nu_n$ is \emph{monotonic}, meaning that $\nu_n(A)\geq \nu_n(B)$ for all $B\subset A \in \pow(\R^{n\times n})$. We will denote by $\mathfrak{V}_n$ the set of all $n$-valuations.}

Our use of the term valuation deviates from the usual meaning: we do not assume the modularity property.

We are interested in defining filtrations through valuations. The monotonicity condition is imposed so the  simplicial construction induced from a valuation defines a simplicial complex. It ensures that  if a simplex is in the filtered space at a given time, all sub-simplices are also in the filtered space at that time. 

\defn[Filtration functor induced by a valuation]{Given $n\in\N$ and any $\nu_n\in\mathfrak{V}_n$ we  induce the filtration functor $\Phi^{\nu_n}:\mathcal{M}\rightarrow \mathcal{F}$ defined as follows: for any $X\in \mathcal{M}$ and any $\sigma\subseteq X$, $$\Phi_X^{\nu_n}(\sigma):=(\nu_n \circ \cset_n\circ \iota_X)(\sigma).$$
We refer to $\Phi^{\nu_n}$ as the \emph{filtration functor induced by $\nu_n$}.}

\begin{lemma}
$\Phi^{\nu_n}$ is well defined.
\end{lemma}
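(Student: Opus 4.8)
The plan is to verify the two defining properties of a filtration functor from Definition~\ref{def:filtfunct}: for each fixed $X \in \mathcal{M}$, the map $\Phi_X^{\nu_n} : \pow(X) \to \mathbb{R}_+$ must (i) be well-defined as a function (i.e.\ the composite $\nu_n \circ \cset_n \circ \iota_X$ actually makes sense on every $\sigma \subseteq X$ and lands in $\mathbb{R}_+$), and (ii) satisfy the monotonicity condition $\Phi_X^{\nu_n}(\tau) \leq \Phi_X^{\nu_n}(\sigma)$ whenever $\tau \subseteq \sigma \subseteq X$.

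For part (i), I would trace through the composite. Given $\sigma \subseteq X$, the map $\iota_X$ sends $\sigma$ to the finite metric space $(\sigma, d_X|_{\sigma \times \sigma})$; since $\sigma$ is finite, $\cset_n(\iota_X(\sigma)) = \im\big(D_\sigma^{(n)}\big)$ is a finite (non-empty) subset of $\mathbb{R}^{n\times n}$, hence an element of $\pow(\mathbb{R}^{n\times n})$ as the paper uses that notation; then $\nu_n$ applied to it gives a well-defined element of $\mathbb{R}_+$. One subtlety worth a sentence: if $\sigma = \emptyset$ the construction could degenerate, but the paper's convention is that $\pow$ consists of \emph{non-empty} finite subsets, so we only need $\sigma \neq \emptyset$, and then $\cset_n(\sigma)$ is non-empty too (it contains at least the constant tuple's distance matrix). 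So $\Phi_X^{\nu_n}$ is a genuine function $\pow(X) \to \mathbb{R}_+$.

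For part (ii), the key observation is that if $\tau \subseteq \sigma \subseteq X$, then $\iota_X(\tau)$ embeds isometrically into $\iota_X(\sigma)$, since both carry the restriction of $d_X$. By Remark~\ref{rem:functorial-cset} (functoriality of curvature sets under isometric embeddings), this gives $\cset_n(\iota_X(\tau)) \subseteq \cset_n(\iota_X(\sigma))$. Now apply the monotonicity of the valuation $\nu_n$ from Definition~\ref{def:valuations}: since $\cset_n(\iota_X(\tau)) \subseteq \cset_n(\iota_X(\sigma))$, we get $\nu_n\big(\cset_n(\iota_X(\tau))\big) \leq \nu_n\big(\cset_n(\iota_X(\sigma))\big)$, i.e.\ $\Phi_X^{\nu_n}(\tau) \leq \Phi_X^{\nu_n}(\sigma)$. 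This establishes the monotonicity condition, so $(X, \Phi_X^{\nu_n})$ is a filtered space for every $X$, and hence $\Phi^{\nu_n}$ is a filtration functor.

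I do not anticipate a genuine obstacle here — the statement is a sanity-check lemma whose proof is essentially an application of Remark~\ref{rem:functorial-cset} followed by the monotonicity axiom for valuations. The only point requiring mild care is the bookkeeping around domains: making sure $\cset_n$ of a finite space really is a finite subset of $\mathbb{R}^{n\times n}$ (so that $\nu_n$ can be applied, since $\nu_n$ is only defined on $\pow(\mathbb{R}^{n\times n})$, the finite subsets), and being explicit about the empty-set convention. If one wanted the filtration functor to be functorial on $\mathcal{M}^{\mathrm{Lip}}$ as well, that would require an additional argument, but the lemma as stated only asks for well-definedness, so that is out of scope.
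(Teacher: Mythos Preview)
Your proposal is correct and follows essentially the same approach as the paper: the core of both arguments is that $\tau\subseteq\sigma$ gives $\cset_n(\iota_X(\tau))\subseteq\cset_n(\iota_X(\sigma))$ by Remark~\ref{rem:functorial-cset}, and then monotonicity of $\nu_n$ yields the filtration inequality. Your part~(i) on domain/codomain bookkeeping is more explicit than the paper's version, but this is just additional care rather than a different idea.
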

\begin{proof}
Let  $\nu_n\in\mathfrak{V}_n$ be a valuation. Let $X\in \M$ and $\tau\subset\sigma\subset X$. By Remark \ref{rem:functorial-cset} $\tau\subset \sigma$ implies $\cset_n(\iota_X(\tau))\subset \cset_n(\iota_X(\sigma))$. Since $\nu_n$ is monotonic, 
	$$\Phi^{\nu_n}_X(\tau)= \nu_n(\cset_n(\iota_X(\tau)))\leq \nu_n(\cset_n(\iota_X(\sigma)))= \Phi^{\nu_n}_X(\sigma). $$
Thus, $\Phi^{\nu_n}_X$ is a filtration on $X$ and $\Phi^{\nu_n}$ is a filtration functor. It will be seen later that $\Phi^{\nu_n}$ is not necessarily functorial on $\M^{\text{Lip}}$. A further treatment of functoriality on local filtration functor is developed in \S \ref{sub:functcharact}.
\end{proof}

\defn{A filtration functor $\Phi:\M\rightarrow \F$ is \emph{local} if for some $n\in \N$ there exists a valuation $\nu_n \in\mathfrak{V}_n$ such that $\Phi = \Phi^{\nu_n}$, in which case we say that $\Phi$ is a \emph{$n$-local filtration functor}. If no such $n$ exists then we say that $\Phi$ is \emph{global}.}

\rem{\label{rem:valu} We make the following remarks: \begin{enumerate}
\item Notice that if the filtration functor $\Phi$ is $n$-local then it is $n'$-local for all $n'\geq n$. Thus, by convention, whenever we say a filtration functor is $n$-local, we are referring to the minimal $n$ for which this is true.
\item The Rips functor is $2$-local.  Indeed, this follows from noticing that for any $X\in\mathcal{M}$ and $\sigma\subset X$:
\begin{eqnarray*}
R_X(\sigma)&=& \diam(\sigma) \\
	&=& \max_{\{x,x'\}\subset \sigma} d_X(x,x') \\
    &=& \max_{A\in \cset_2(\iota_X(\sigma) )}\max_{i,j}A_{ij}.
\end{eqnarray*}
In words, the arrival time of a simplex under Rips depends only on pairwise distances between points in the simplex, which is information given by $\cset_2(\sigma)$. We can see that the $2$-valuation that generates the Rips filtration functor is $\nu_2(\mathcal{A}):= \max\limits_{A\in \mathcal{A}} ||A||_{\infty}$ for all $\mathcal{A} \in \pow(\R^{2\times 2})$.

 \item The \v{C}ech functor is not local. To show this, we construct a counterexample: Let $(P,d_P)$ be the metric space consisting of three equidistant points $P=\{p_1,p_2,p_3 \}$ at distance 2 from each other. Also, let $(Q,d_Q)$ be the  metric space consisting of four points $Q=\{q_0,q_1,q_2,q_3\}$ where the metric is given by the matrix:
$$d_Q= \begin{pmatrix}
	0 & 1 & 1 & 1 \\
    1 & 0 & 2 & 2 \\
    1 & 2 & 0 & 2 \\
    1 & 2 & 2 & 0 \\
\end{pmatrix}.$$

Since the subset $A=\{q_1,q_2,q_3\}\subset Q$ and $P$ are isometric, $\cset_n(\iota_Q(A))= \cset_n(P)$ for all $n\in \N$. If the \v{C}ech functor were $n$-local for some $n\in \N$, then, there would exist $\nu_n\in\mathfrak{V}_n$ such that
$$C_Q(A)= \nu_n(\cset_n(\iota_Q(A)))= \nu_n(\cset_n(P))= C_P(P).$$
But by definition of the \v{C}ech filtration functor $C_X(A)= 1$ and $C_Y(P)=2$, which yields a contradiction. The intuitive explanation why the \v{C}ech complex construction is not $n$-local for any $n\in \N$ is that the arrival time of a simplex depends on properties of the whole space, and not only on the metric properties of the points in the simplex. Thus $C$ is global.
\item Applying the Vietoris-Rips filtration functor on data is computationally better than \v{C}ech since the time of arrival of a simplex depends only on information about $2-$point comparisons of points in the simplex, instead of comparisons between all points each time. In the same sense, $n$-local filtrations functors for sufficiently small $n\in\N$ are computationally better than global filtrations such as \v{C}ech. 
\end{enumerate}}

We now restrict ourselves to a suitable class of valuations that yield stable filtration functors.

\defn{Let $n\in\mathbb{N}$ and let  $\nu_n \in \mathfrak{V}_n$. Given $L\geq 0$, we  say that $\nu_n$ is \emph{$L-$stable} if and only if for all non-empty  $A,B\subset \R^{n\times n}$:
 $$ |\nu_n(A)-\nu_n(B)|\leq L \cdot \,\dH^{(\mathbb{R}^{n\times n}, \ell_\infty)}( A, B ).$$
 We denote by $\mathfrak{V}_n^L$ the subset of all $L$-stable $n$-valuations.}

Stability is a desirable property since it relates the Gromov-Hausdorff  distance in $\M$ to the bottleneck distance on the space of barcodes. By invoking Theorem \ref{thm:tripods} and recalling the stability of the $\cset_n$ map (cf. Theorem \ref{thm:cset}),  we obtain the following:

\begin{thm}\label{lstablevaluation} Let $n\in\N$ and $L>0$. Then, for any $\nu_n \in\mathfrak{V}_n^L$ one has that for all $X,Y\in\M$ and $k\in\mathbb{N}$,
 $$ \dB\left( \dgm_{k}^{\Phi^{\nu_n}}(X),\dgm_{k}^{\Phi^{\nu_n}}(Y) \right)\leq 2L\cdot\dgh(X,Y). $$
\end{thm}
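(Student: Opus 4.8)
The plan is to reduce the statement to Theorem~\ref{thm:tripods} by showing that the filtration functor $\Phi^{\nu_n}$ induced by an $L$-stable valuation is itself a $2L$-stable filtration functor, i.e.
$$ d_{\mathcal{F}}\big((X,\Phi^{\nu_n}_X),(Y,\Phi^{\nu_n}_Y)\big)\le 2L\cdot \dgh(X,Y)\qquad\forall X,Y\in\M. $$
Granting this, Theorem~\ref{thm:tripods} immediately yields $\dB(\dgm_k^{\Phi^{\nu_n}}(X),\dgm_k^{\Phi^{\nu_n}}(Y))\le d_{\mathcal{F}}\big((X,\Phi^{\nu_n}_X),(Y,\Phi^{\nu_n}_Y)\big)\le 2L\,\dgh(X,Y)$ for every $k\in\N$, which is exactly the asserted inequality.

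To prove the displayed bound, fix $X,Y\in\M$ and choose an optimal correspondence $R\in\mathcal{R}(X,Y)$, so that $\dis(R)=2\,\dgh(X,Y)$ (the minimum is attained since $X,Y$ are finite, hence there are only finitely many correspondences). Use as tripod the finite set $Z:=R\subseteq X\times Y$ equipped with the two coordinate projections $\pi_X,\pi_Y$; these are surjective precisely because $R$ is a correspondence, so $Z$ is an admissible competitor in the infimum defining $d_{\mathcal{F}}$. It then suffices to bound $|\Phi^{\nu_n}_X(\pi_X(\sigma))-\Phi^{\nu_n}_Y(\pi_Y(\sigma))|$ by $L\cdot\dis(R)$ for every non-empty $\sigma\subseteq Z$; taking the maximum over $\sigma$ and recalling $\dis(R)=2\,\dgh(X,Y)$ then gives the claim.

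So fix a non-empty $\sigma\subseteq Z$. The key observation is that, viewed as a subset of $X\times Y$, $\sigma$ is a correspondence between its own projections $\pi_X(\sigma)$ and $\pi_Y(\sigma)$ (the coordinate projections of $\sigma$ are onto these sets by definition), and $\dis(\sigma)\le\dis(R)$ because $\sigma\subseteq R$. Hence $\dgh\big(\iota_X(\pi_X(\sigma)),\iota_Y(\pi_Y(\sigma))\big)\le\tfrac12\dis(\sigma)\le\tfrac12\dis(R)$. Applying Theorem~\ref{thm:cset} to the finite (hence compact) spaces $\iota_X(\pi_X(\sigma))$ and $\iota_Y(\pi_Y(\sigma))$ gives
$$ \dH^{(\R^{n\times n},\ell_\infty)}\!\big(\cset_n(\iota_X(\pi_X(\sigma))),\cset_n(\iota_Y(\pi_Y(\sigma)))\big)\le 2\,\dgh\big(\iota_X(\pi_X(\sigma)),\iota_Y(\pi_Y(\sigma))\big)\le\dis(R), $$
and then $L$-stability of $\nu_n$ together with the definition $\Phi^{\nu_n}_X(\tau)=\nu_n(\cset_n(\iota_X(\tau)))$ yields
$$ |\Phi^{\nu_n}_X(\pi_X(\sigma))-\Phi^{\nu_n}_Y(\pi_Y(\sigma))|\le L\cdot\dH^{(\R^{n\times n},\ell_\infty)}\!\big(\cset_n(\iota_X(\pi_X(\sigma))),\cset_n(\iota_Y(\pi_Y(\sigma)))\big)\le L\cdot\dis(R), $$
as required. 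The only mildly delicate point is the observation that an arbitrary non-empty subset of a correspondence is again a correspondence between its projections with no larger distortion; the rest of the argument is a direct concatenation of Theorems~\ref{thm:tripods} and \ref{thm:cset} with the $L$-stability hypothesis on $\nu_n$, keeping track of the factor $2$ coming from Theorem~\ref{thm:cset} and the factor $\tfrac12$ in the definition of $\dgh$.
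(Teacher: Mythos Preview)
Your proof is correct and follows essentially the same route as the paper's own argument: take an optimal correspondence $R$ as the tripod, observe that any non-empty $\sigma\subseteq R$ is a correspondence between its projections with $\dis(\sigma)\le\dis(R)$, then chain Theorem~\ref{thm:cset} with the $L$-stability of $\nu_n$ to bound $d_{\mathcal{F}}$ and conclude via Theorem~\ref{thm:tripods}. Your tracking of the constants is in fact cleaner than the paper's (which has a harmless typo, dropping a factor $\tfrac12$ when passing from $\tfrac12\max_{R}|\cdots|$ to ``$\dis(R)$'').
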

\begin{proof}
Let $X,Y\in \M$. Let $R\subset X\times Y$ be the correspondence that attains the Gromov-Hausdorff distance between this two spaces, $\dgh(X,Y)=\frac{1}{2}\dis(R)$. Now, we define the maps $\pi_1:R\rightarrow X$ and $\pi_2: R\rightarrow Y$ the natural projections from $R$ to $X$ and $Y$ respectively. From the fact that $R$ is a correspondence it follows that $\pi_1$ and $\pi_2$  are surjective. 

For this, first notice that the tripod $(R,\pi_1,\pi_2)$ consists of a set $R$, and two surjections from $R$ to $X$ and $Y$ respectively. Now, let $S\subset R$. It is worth noticing that $S$ is a correspondence between $\pi_1(S)$ and $\pi_2(S)$. From the fact that $\nu_n\in\mathfrak{V}_n^L$ and Theorem \ref{thm:cset}, 
\begin{eqnarray*}
|(\nu_n\circ \cset_n\circ\iota_X)(\pi_1(S))-(\nu_n\circ \cset_n\circ \iota_Y)(\pi_2(S))|&\leq & L \cdot\dH(\cset_n(\iota_X(\pi_1(S))), \cset_n(\iota_Y(\pi_2(S))))\\
  &\leq& 2L\cdot\dgh(\iota_X(\pi_1(S)),\iota_Y(\pi_2(S))).
\end{eqnarray*}

Since $S$ is a correspondence between $\pi_1(S)$ and $\pi_2(S)$ and $S\subset R$ then,
\begin{eqnarray*}
\dgh(\iota_X(\pi_1(S)),\iota_Y(\pi_2(S)))&\leq &\frac{1}{2} \dis(S)\\
	&=& \frac{1}{2}\max_{(x,y),(x',y')\in S} |d_X(x,x')-d_Y(y,y')|\\
	&\leq& \frac{1}{2}\max_{(x,y),(x',y')\in R} |d_X(x,x')-d_Y(y,y')|\\
    &=& \dis(R)\\
    &=& \dgh(X,Y).
\end{eqnarray*}

It follows that for all $S\subset R$, 
$$ |(\nu_n\circ \cset_n\circ\iota_X)(\pi_1(S))-(\nu_n\circ \cset_n\circ \iota_Y)(\pi_2(S))|\leq 2L\cdot \dgh(X,Y).$$

 We conclude that $d_{\F}((X,\Phi^{\nu_n}_X),(Y,\Phi^{\nu_n}_Y))\leq 2L\cdot\dgh(X,Y).$ The conclusion now follows directly from  Theorem \ref{thm:tripods}.\end{proof}

\subsection{Filtrations based on ultrametricity}

We now define a filtration based on ultrametricity, i.e. based on how close a simplex (endowed with the restriction of the metric) is from being an ultrametric space \cite{phylo}.

We will define the \emph{ultrametric deviation function} on a metric space $(X,d_X)$ to be:
$$ \ud_X(x_1,x_2,x_3):= d_{X}(x_1,x_3)- \max\{d_X(x_1,x_2),d_X(x_2,x_3)\}, \hspace{1cm} \forall x_1,x_2,x_3\in X. $$

This function  measures how much a triangle $(x_1,x_2,x_3)$ fails to be an ultrametric triangle. Indeed, on ultrametric spaces all triangles are isosceles and the two largest sides are equal \cite{phylo}. Then, we can define the ultrametricity \cite{tree-nips} of a metric $(X,d_X)$ space to be:

$$ \ult(X):= \max_{x_1,x_2,x_3\in X} \ud_X(x_1,x_2,x_3).$$

We  define the filtration functor $\Phi^{\ult}:\M\rightarrow \mathcal{F}$ as follows: for any $X\in\M$:
$$\Phi^{\ult}_X(\sigma) := \ult(\iota_X(\sigma) ),\hspace{1cm} \forall \, \sigma \subset X.$$

\begin{prop}\label{ultWD}
$\Phi^{\ult}$ is well defined. 
\end{prop}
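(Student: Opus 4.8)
The plan is to verify the two conditions that make $\Phi^{\ult}_X$ a filtration on each finite set $X$: that it is well-defined as a map $\pow(X)\to\R_+$ (in particular nonnegative), and that it satisfies the monotonicity condition $\Phi^{\ult}_X(\tau)\le\Phi^{\ult}_X(\sigma)$ for all $\tau\subset\sigma\subset X$. Nonnegativity is the first thing to check: for a subset $\sigma$ with at least three points, $\ult(\iota_X(\sigma))=\max_{x_1,x_2,x_3\in\sigma}\ud_X(x_1,x_2,x_3)$, and choosing $x_1,x_2,x_3$ with $x_1=x_2$ (or any degenerate choice) forces $\ud_X(x_1,x_2,x_3)=d_X(x_1,x_3)-\max\{0,d_X(x_2,x_3)\}=d_X(x_1,x_3)-d_X(x_1,x_3)=0$, so the maximum is at least $0$. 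For $\sigma$ with one or two points the maximum is over such degenerate triples only, and the same computation shows $\ult(\iota_X(\sigma))=0$; so in every case $\Phi^{\ult}_X(\sigma)\ge 0$, and the codomain $\R_+$ is correct.

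Next I would establish monotonicity. Let $\tau\subset\sigma\subset X$. Every triple $(x_1,x_2,x_3)$ with $x_i\in\tau$ is also a triple with $x_i\in\sigma$, so the maximum defining $\ult(\iota_X(\tau))$ is taken over a subset of the triples defining $\ult(\iota_X(\sigma))$; since the quantity $\ud_X(x_1,x_2,x_3)$ being maximized is intrinsic to $X$ and does not depend on the ambient set, we immediately get
$$\Phi^{\ult}_X(\tau)=\ult(\iota_X(\tau))=\max_{x_1,x_2,x_3\in\tau}\ud_X(x_1,x_2,x_3)\le\max_{x_1,x_2,x_3\in\sigma}\ud_X(x_1,x_2,x_3)=\ult(\iota_X(\sigma))=\Phi^{\ult}_X(\sigma).$$
This verifies the monotonicity condition of the definition of a filtration on $X$. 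Finally, since the construction $X\mapsto\Phi^{\ult}_X$ assigns a filtration to every $X\in\M$, it is a filtration functor in the sense of Definition \ref{def:filtfunct}, which is exactly the assertion that $\Phi^{\ult}$ is well-defined.

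I do not expect any serious obstacle here; the only subtlety worth flagging is the nonnegativity claim, which relies on allowing degenerate triples (repeated points) in the maximum defining $\ult$ — this is consistent with the quantifier $\forall x_1,x_2,x_3\in X$ used in the definition of $\ud_X$ and $\ult$. One could alternatively restrict to genuine $3$-point configurations and argue via the triangle inequality that $\ud_X\ge -\,(\text{something})$ need not hold, so the degenerate-triple observation is the cleanest route and I would present it that way. Everything else is a direct unwinding of definitions, parallel to the proof that $\Phi^{\nu_n}$ is well-defined given earlier.
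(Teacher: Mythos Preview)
Your proposal is correct and follows essentially the same approach as the paper: the monotonicity argument is identical, namely that the maximum of $\ud_X$ over triples in $\tau$ is bounded by the maximum over triples in the larger set $\sigma$. You additionally verify nonnegativity via degenerate triples, a detail the paper omits but which is a welcome completeness check rather than a different route.
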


\begin{proof}
Let $X\in \M$ and $\tau \subset \sigma \subset X$. Then,
\begin{eqnarray*}
	\Phi^{\ult}_X (\tau)&=& \max_{x_1,x_2,x_3\in \tau} \ud_X(x_1,x_2,x_3)\\
    &\leq & \max_{x_1,x_2,x_3\in \sigma} \ud_X(x_1,x_2,x_3) \\
    &=& \Phi^{\ult}_X(\sigma).
\end{eqnarray*}
Thus, any subsimplex $\tau$ of $\sigma\subset X$ has an earlier time of arrival than $\sigma$.

\end{proof}

\begin{rem} 
$\Phi^{\ult}$ satisfies the monotonicity condition that allows it to generate a filtrated simplicial complex when it is applied to data, and moreover it satisfies the functorial inequality stated in the definition of filtration functors when restricted to the category of finite metric spaces with isometric maps. However, $\Phi^{\ult}$ does not satisfy the functorial inequality when considering the category of finite metric spaces with 1-Lipschitz maps. This is proved below. 

\begin{proof}
Let $(\{p_1,p_2,p_3\},d_P)$ be the metric space of the equilateral triangle of side 2. Let $(\{s_1,s_2,s_3\}, d_R)$ the space consisting of two points of the equilateral triangle and the midpoint of a different side:

\begin{figure}[H]
\begin{tikzpicture}[scale=0.6,every node/.style={scale=0.6}]
\draw[shift={(-2,4*0.8660)}] (0,-0.4) node[] {$(P,d_P)$};
\filldraw[blue] (-2,0) circle (2pt) node[left] {$p_1$};
\filldraw[blue] (2,0) circle (2pt) node[right] {$p_2$};
\filldraw[blue] (0,4*0.8660) circle (2pt) node[above] {$p_3$};
\draw[blue] (-2,0) -- (2,0);
\draw[blue] (2,0) -- (0,4*0.8660);
\draw[blue] (0,4*0.8660) -- (-2,0);
\draw[shift={(0,0)}] (0,-0.4) node[] {2};
\draw[shift={(-1,2*0.8660)}] (-0.3,0.2) node[] {2};
\draw[shift={(1,2*0.8660)}] (0.3,0.2) node[] {2};

\draw[shift={(7-2,4*0.8660)}] (0,-0.4) node[] {$(S,d_S)$};
\filldraw[red] (7-2,0) circle (2pt) node[left] {$r_1$};
\filldraw[red] (7,0) circle (2pt) node[right] {$r_2$};
\filldraw[red] (7,4*0.8660) circle (2pt) node[above] {$r_3$};
\draw[red] (7-2,0) -- (7,0);
\draw[red] (7,0) -- (7,4*0.8660);
\draw[red] (7,4*0.8660) -- (7-2,0);
\draw[shift={(7-1,0)}] (0,-0.4) node[] {1};
\draw[shift={(7-1,2*0.8660)}] (-0.3,0.2) node[] {2};
\draw[shift={(7,2*0.8660)}] (0.3,0) node[] {$\sqrt{3}$};
\end{tikzpicture}
\caption{A visual representation of the spaces $P$ and $S$. It is clear that $P$ is ultrametric and that S is not.}
\end{figure}
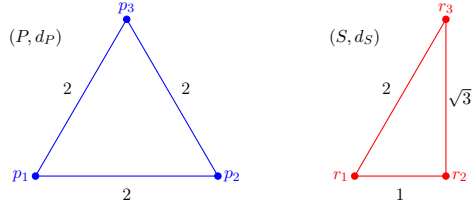
We can define a map $f: P\rightarrow S$ given by $f(p_i)=s_i$ for all $i\in \{1,2,3\}$. Clearly this map satisfies:
$$d_P(p_i,p_j)\geq d_S(f(p_i),f(p_j)) \hspace{1cm} \forall \, p_i,p_j \in P$$
It is also clear that $\Phi^{\ult}_P(\{p_1,p_2,p_3\})= 0$ since this triangle is isosceles and, then, ultrametric. But,
$$\Phi^{\ult}_S(\{s_1,s_2,s_3\})= 2-\sqrt{3}> 0 = \Phi^{\ult}_P(\{p_1,p_2,p_3\}).$$
\end{proof}
\end{rem}

The Rips functor determines the time of arrival of simplexes by their size. This is not the case for $\Phi^{\ult}$. Huge simplexes with ultrametric structure arrive at time zero, and non-ultrametric small simplexes are assigned positive time of arrival.

Next we show that $\Phi^{\ult}$ satisfies both stability and locality properties.

\begin{prop}\label{prop-ult}
$\Phi^{\ult}$ is $3$-local and is induced  by the valuation $\nu_{\ult} \in \mathfrak{V}_3$ such that $$ \R^{3\times 3} \supset A \mapsto  \max_{a\in A} (a_{13}-\max\{a_{12}, a_{23} \}).$$
Furthermore, $\nu_{\ult}$ is $2$-stable.
\end{prop}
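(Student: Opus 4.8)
# Proof Proposal

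The plan is to prove the three claims of Proposition~\ref{prop-ult} in sequence: first that $\nu_{\ult}$ as defined is a genuine $3$-valuation (i.e.\ monotonic), then that the filtration functor it induces is exactly $\Phi^{\ult}$, then that the locality order is minimal (equal to $3$), and finally that $\nu_{\ult}$ is $2$-stable.

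\textbf{Monotonicity and the identification $\Phi^{\nu_{\ult}} = \Phi^{\ult}$.} First I would observe that $\nu_{\ult}(A) := \max_{a \in A}\big(a_{13} - \max\{a_{12},a_{23}\}\big)$ is monotonic: enlarging $A$ can only enlarge a maximum, so $B \subset A$ gives $\nu_{\ult}(B) \leq \nu_{\ult}(A)$, hence $\nu_{\ult} \in \mathfrak{V}_3$. Next, for $X \in \M$ and $\sigma \subset X$, I would unwind the definitions: an element $a \in \cset_3(\iota_X(\sigma))$ is precisely a matrix $\big(d_X(x_i,x_j)\big)_{i,j=1}^3$ for some triple $x_1,x_2,x_3 \in \sigma$, so $a_{13} - \max\{a_{12},a_{23}\} = d_X(x_1,x_3) - \max\{d_X(x_1,x_2), d_X(x_2,x_3)\} = \ud_X(x_1,x_2,x_3)$. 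Taking the maximum over all of $\cset_3(\iota_X(\sigma))$ is exactly taking the maximum of $\ud_X(x_1,x_2,x_3)$ over all triples in $\sigma$, which is $\ult(\iota_X(\sigma)) = \Phi^{\ult}_X(\sigma)$. This shows $\Phi^{\ult} = \Phi^{\nu_{\ult}}$, so $\Phi^{\ult}$ is $3$-local; well-definedness as a filtration functor then also follows from the general lemma on $\Phi^{\nu_n}$, consistent with Proposition~\ref{ultWD}.

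\textbf{Minimality of $n=3$.} To see $\Phi^{\ult}$ is not $2$-local, by Remark~\ref{rem:valu}(1) it suffices to exhibit two metric spaces (or two simplices inside metric spaces) with the same $\cset_2$ but different values of $\Phi^{\ult}$. The point is that $\cset_2$ records only the multiset of pairwise distances, while $\ult$ genuinely depends on how those distances are arranged within a triangle. I would take, e.g., an equilateral triangle with all sides equal to $1$ (for which $\ult = 0$, hence $\Phi^{\ult} = 0$), versus a triangle with side lengths chosen so that the pairwise-distance set coincides — for instance one needs the set $\{1\}$ of distances to appear in both, which forces the equilateral case, so instead I would compare spaces whose $2$-curvature sets agree as subsets of $\R^{2\times 2}$ but whose triangles differ; concretely a suitable pair is a space on which every triangle is ultrametric versus one with a non-ultrametric triangle using the same distance values, mirroring the $P$ versus $S$ construction in the preceding remark (rescaled so the distance \emph{sets} match). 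This step is the one I expect to need the most care, since one must produce spaces with genuinely equal $\cset_2$, not merely equal distance \emph{sets} at the level of unordered pairs.

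\textbf{$2$-stability of $\nu_{\ult}$.} Finally, for nonempty $A,B \subset \R^{3\times 3}$ I would bound $|\nu_{\ult}(A) - \nu_{\ult}(B)|$ by $2\,\dH^{(\R^{3\times 3},\ell_\infty)}(A,B)$. Let $g(a) := a_{13} - \max\{a_{12},a_{23}\}$, so $\nu_{\ult} = \max_{a}g(a)$. The key estimate is that $g$ is $2$-Lipschitz with respect to $\ell_\infty$: given $a, a'$, one has $|a_{13} - a'_{13}| \leq \|a - a'\|_\infty$ and $\big|\max\{a_{12},a_{23}\} - \max\{a'_{12},a'_{23}\}\big| \leq \|a-a'\|_\infty$, hence $|g(a) - g(a')| \leq 2\|a-a'\|_\infty$. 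A standard argument then shows that taking a supremum of a $c$-Lipschitz function is $c$-Lipschitz with respect to Hausdorff distance: pick $a^\ast \in A$ attaining $\nu_{\ult}(A)$ (or come within $\varepsilon$; the sets are finite here so the max is attained) and choose $b \in B$ with $\|a^\ast - b\|_\infty \leq \dH(A,B)$, giving $\nu_{\ult}(A) = g(a^\ast) \leq g(b) + 2\dH(A,B) \leq \nu_{\ult}(B) + 2\dH(A,B)$; by symmetry the reverse inequality holds, yielding $|\nu_{\ult}(A) - \nu_{\ult}(B)| \leq 2\,\dH(A,B)$, i.e.\ $\nu_{\ult} \in \mathfrak{V}_3^2$. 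Combined with Theorem~\ref{lstablevaluation}, this also gives the stability bound $\dB(\dgm_k^{\Phi^{\ult}}(X), \dgm_k^{\Phi^{\ult}}(Y)) \leq 4\,\dgh(X,Y)$ as a corollary.
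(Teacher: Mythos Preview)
Your treatment of the two substantive parts of the proposition --- the identification $\Phi^{\ult}=\Phi^{\nu_{\ult}}$ and the $2$-stability of $\nu_{\ult}$ --- follows exactly the paper's own argument; the paper's proof does essentially the same chain of inequalities, just without explicitly isolating the function $g(a)=a_{13}-\max\{a_{12},a_{23}\}$ as $2$-Lipschitz.

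Where you go beyond the paper is the minimality claim (that $\Phi^{\ult}$ is not $2$-local). The paper's proof simply does not address this, notwithstanding the convention in Remark~\ref{rem:valu}(1). Your instinct to prove it is sound, and your strategy --- exhibit two simplices with identical $\cset_2$ but different $\ult$ --- is correct. However, your sketch does not produce a working example, and the direction you start in cannot work: any two $3$-point metric spaces with the same \emph{set} of pairwise distances are isometric, so the equilateral-triangle idea and the $P$ versus $S$ example (which have different distance sets) both fail here. You need at least four points. A concrete pair: take $A=\{0,1,2,3\}\subset\R$ with the induced metric (distance set $\{1,2,3\}$, and $\ult(A)=1$ via the triple $(0,1,3)$), and take $B$ to be the $4$-point ultrametric with $d(a,b)=1$, $d(c,d)=2$, and all cross-distances equal to $3$ (same distance set $\{1,2,3\}$, but $\ult(B)=0$). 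Then $\cset_2(A)=\cset_2(B)$ while $\Phi^{\ult}_A(A)\neq\Phi^{\ult}_B(B)$, ruling out $2$-locality.
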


\begin{proof}[Proof of Proposition \ref{prop-ult}]
 Let $A,B\subset \R^{3\times 3}$. Let $\delta= \dH^{(\mathbb{R}^{3\times 3}, \ell_\infty)}( A, B )$. Let $a^0\in A$ be a matrix in $A$ such that $\nu_{\ult}(A)= a^0_{12}-\max\{a^0_{13},a^0_{23}\}$. There exists a matrix $b^0\in B$ such that $||a^0-b^0||_{\infty}\leq \delta$. Then,

\begin{eqnarray*}
	\nu_{\ult}(A)-\nu_{\ult}(B)&=& a^0_{13}-\max\{a^0_{12},a^0_{23}\}-\nu_{\ult}(B) \\
    &\leq & a^0_{13}-\max\{a^0_{12},a^0_{23}\}- \left(b^0_{13}-\max\{b^0_{12},b^0_{23}\}\right)\\
    &=& (a^0_{13}-b^0_{13})+ \left( \max\{a^0_{12},a^0_{23}\} - \max\{b^0_{12},b^0_{23}\}\right)\\
    &\leq & |a^0_{13}-b^0_{13}| + \max\{|a^0_{12} - b^0_{13}|,|a^0_{22} - b^0_{23}| \} \\
    & \leq & 2||a^0 - b^0||_{\infty}\\
    & \leq & 2 \delta.
\end{eqnarray*}
It follows that the valuation $\nu_{\ult}$ is 2-stable. To conclude, we observe that for all $X \in \M$ and $\sigma\subset X$,
\begin{eqnarray*}
	\Phi^{\ult}_X(\sigma)&=& \max_{x_1,x_2,x_3\in \sigma} d_X(x_1,x_2)- \max\{d_X(x_1,x_3), d_X(x_2,x_3)\} \\
    &=& \max_{a\in\cset_3(\iota_X(\sigma)) } \left(a_{13}-\max\{a_{12}, a_{23} \} \right) \\
    &=& \nu_{\ult} (\iota_X(\sigma)).
\end{eqnarray*}

\end{proof}

Let us denote the $k-$th dimensional diagram map induced by $\Phi^{\ult}$ as $\dgm^{\ult}_k$. Note that, from Theorem \ref{lstablevaluation}, $\Phi^{\ult}$ is 4-stable: for all $k\in\N$ and $X,Y\in\mathcal{M}$, $$ \dB\left(\dgm^{\ult}_{k}(X), \dgm^{\ult}_{k}(Y) \right)\leq 4\cdot \dgh(X,Y).$$

From this, $\Phi^{\ult}$ is an example of a filtration functor induced by a stable valuation that satisfies functoriality on a different category than both Vietoris-Rips and \v{C}ech filtration functors. There are other interesting features of this filtration that are essentially different from the intuition followed by the Vietoris-Rips or the \v{C}ech filtration functors:

\begin{rem} We make the following remarks:
\begin{enumerate}
\item For any two-point metric space $P=\{p_1,p_2\}$ with distance $d_P$ one has $\ult(P)=0$. Then, for any $X\in \M$, the whole $1$-dimensional skeleton of $X$ is added by the filtration $\Phi^{\ult}$ at time zero. This implies that $\dgm_0^{\ult}(X)=\{[0,\infty)\}$ for any $X\in\mathcal{M}.$

\item The time of appearance of a simplex $\sigma \subset X$ is given by a function of triplets of points. Thus, the underlying simplicial complex is the clique/flag complex of the complex built up to dimension $2$ at all time. Furthermore, since  the whole $1$-skeleton is added at time zero, the whole complex depends only on the birth time of $2$-dimensional simplexes.
\end{enumerate}
\end{rem}

\subsubsection{A family of ultrametricity based filtration functors}

We generalize the above notion of ultrametricity in order to generate a whole family of filtrations.

\defn{Let $k\in \N$ and $(X, d_X)\in \M$. We define the \emph{$k$-th ultrametricity of $X$} to be $$\ult_k(X):= \max_{x_1,...,x_k\in X} \left(d_X(x_1,x_k)- \max_{i} d_X(x_{i-1},x_i)\right). $$}

The $k-$th ultrametricity intends to measure how a space differs from being ultrametric on sets of size at most $k$. Notice that $\ult =\ult_3$. 

Using this definition we now define a new family of filtration functors.

\defn{ Let $k\in \N$. The \emph{$k-$th ultrametricity filtration functor} is the map $\Phi^{\ult_k}:\M\rightarrow \F$ given by,
$$\Phi^{\ult_k}_X(\sigma):= \ult_k(\iota_X(\sigma)), \hspace{1cm} \forall \, X\in \M , \, \sigma\subset X. $$
}

Of course $\Phi^{\ult_3}=\Phi^{\ult}$. As in Proposition \ref{prop-ult} we can prove locality and stability of these filtration functors.

\begin{prop}
For each $k\in\N$, $\Phi^{\ult_k}$ is well defined, $k$-local, and induced by the $2$-stable valuation $\nu_{\ult_k}\in\mathfrak{V}_k^2$ given by
$$ \R^{k\times k}\supset A \mapsto\nu_{\ult_k}(A):= \max_{a\in A} \left(a_{1k}- \max_{i} a_{i\, i+1}\right).$$
\end{prop}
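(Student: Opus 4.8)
The plan is to mirror the three-part argument used for Proposition \ref{prop-ult} (the case $k=3$): well-definedness, locality, and stability of the valuation. Each part is a direct generalization, so I would organize the proof accordingly and flag the one genuinely delicate point.

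First, for \emph{well-definedness} I would repeat the argument of Proposition \ref{ultWD}: given $X\in\M$ and $\tau\subset\sigma\subset X$, every tuple $(x_1,\dots,x_k)$ of points of $\tau$ is also a tuple of points of $\sigma$, so the maximum defining $\ult_k(\iota_X(\tau))$ is taken over a subset of the tuples defining $\ult_k(\iota_X(\sigma))$, giving $\Phi^{\ult_k}_X(\tau)\le\Phi^{\ult_k}_X(\sigma)$. Hence $\Phi^{\ult_k}_X$ is monotone and $\Phi^{\ult_k}$ is a filtration functor. Second, for \emph{locality}, I would observe that the quantity $d_X(x_1,x_k)-\max_i d_X(x_{i-1},x_i)$ depends only on the pairwise distances among $x_1,\dots,x_k$, i.e. only on the matrix $D_X^{(k)}(x_1,\dots,x_k)\in\R^{k\times k}$; running over all $k$-tuples of points in $\sigma$ gives exactly a maximum over $\cset_k(\iota_X(\sigma))$. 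Concretely, for $a\in\R^{k\times k}$ the entry $a_{1k}$ plays the role of $d_X(x_1,x_k)$ and $a_{i\,i+1}$ the role of $d_X(x_{i-1},x_i)$, which identifies the valuation $\nu_{\ult_k}(A)=\max_{a\in A}(a_{1k}-\max_i a_{i\,i+1})$ and shows $\Phi^{\ult_k}_X(\sigma)=\nu_{\ult_k}(\cset_k(\iota_X(\sigma)))$, so $\Phi^{\ult_k}=\Phi^{\nu_{\ult_k}}$ and the functor is $k$-local.

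Third, for \emph{$2$-stability of $\nu_{\ult_k}$}, I would imitate the chain of inequalities in the proof of Proposition \ref{prop-ult}. Fix non-empty $A,B\subset\R^{k\times k}$ and set $\delta=\dH^{(\R^{k\times k},\ell_\infty)}(A,B)$. Pick $a^0\in A$ attaining $\nu_{\ult_k}(A)=a^0_{1k}-\max_i a^0_{i\,i+1}$, and choose $b^0\in B$ with $\|a^0-b^0\|_\infty\le\delta$; then $\nu_{\ult_k}(A)-\nu_{\ult_k}(B)\le (a^0_{1k}-\max_i a^0_{i\,i+1})-(b^0_{1k}-\max_i b^0_{i\,i+1})$. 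Writing this as $(a^0_{1k}-b^0_{1k})+(\max_i b^0_{i\,i+1}-\max_i a^0_{i\,i+1})$ — note the sign flip on the second term, so one bounds $\max_i b^0_{i\,i+1}-\max_i a^0_{i\,i+1}\le\max_i|a^0_{i\,i+1}-b^0_{i\,i+1}|\le\|a^0-b^0\|_\infty$ using that the max of a finite family is $1$-Lipschitz in $\ell_\infty$ — gives $\nu_{\ult_k}(A)-\nu_{\ult_k}(B)\le |a^0_{1k}-b^0_{1k}|+\|a^0-b^0\|_\infty\le 2\|a^0-b^0\|_\infty\le 2\delta$. Swapping the roles of $A$ and $B$ yields $|\nu_{\ult_k}(A)-\nu_{\ult_k}(B)|\le 2\delta$, so $\nu_{\ult_k}\in\mathfrak{V}_k^2$.

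The only step that needs genuine care — as opposed to transcription from the $k=3$ case — is the treatment of the $\max_i$ term: one must use that $(c_i)\mapsto\max_i c_i$ is $1$-Lipschitz with respect to $\|\cdot\|_\infty$, applied to the family indexed by $i$ (with $i$ running over the ``cyclic-edge'' indices $1,\dots,k-1$, or $2,\dots,k$ depending on the indexing convention for $x_{i-1},x_i$), and keep the direction of the inequality straight. Everything else is a word-for-word generalization of Propositions \ref{ultWD} and \ref{prop-ult}, so I would state those two parts briefly and spend the bulk of the written proof on the stability estimate. (One should also make sure the convention in $\ult_k$ — whether the second max ranges over $i\in\{2,\dots,k\}$ with the factor $d_X(x_{i-1},x_i)$, matching $a_{i-1\,i}$, versus the displayed $a_{i\,i+1}$ — is applied consistently; this is purely bookkeeping and does not affect the argument.)
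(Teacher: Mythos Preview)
Your proposal is correct and follows exactly the approach the paper intends: the paper does not give a separate proof of this proposition but simply states that ``As in Proposition \ref{prop-ult} we can prove locality and stability of these filtration functors,'' and your write-up is precisely the natural generalization of the arguments of Propositions \ref{ultWD} and \ref{prop-ult} to arbitrary $k$. Your handling of the sign in the $\max_i$ term is in fact cleaner than the $k=3$ proof in the paper (which contains a harmless sign slip at that step), and your remark about the indexing convention $a_{i-1\,i}$ versus $a_{i\,i+1}$ correctly flags a purely notational inconsistency in the statement that does not affect the argument.
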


We will denote the $k-$th ultrametricity filtration functor as $\Phi^{\ult_k}$ instead of $\Phi^{\nu_{\ult_k}}$. We now prove that this family of filtrations is not trivial.

\begin{prop}
In general, for $k>\ell \geq 3$, $\ult_k$ and $\ult_{\ell}$ are different functions such that $\ult_k \geq \ult_{\ell}$. Furthermore, $\Phi^{\ult_k}\neq \Phi^{\ult_{\ell} }.$
\end{prop}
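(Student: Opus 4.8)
The plan is to dispatch the three assertions separately: the pointwise bound $\ult_k \ge \ult_\ell$ on all of $\M$, the existence of a space on which $\ult_k$ and $\ult_\ell$ disagree, and the separation $\Phi^{\ult_k}\neq\Phi^{\ult_\ell}$; the last will be an immediate byproduct of the witness space built for the second.

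\emph{Monotonicity.} Given $X\in\M$ and any $\ell$-tuple $(x_1,\dots,x_\ell)\in X^\ell$, I would pad it to a $k$-tuple by repeating the last coordinate, i.e. pass to $(x_1,\dots,x_\ell,x_\ell,\dots,x_\ell)$ with $x_\ell$ occurring $k-\ell+1$ times. The first and last entries are unchanged, so $d_X$ between them is still $d_X(x_1,x_\ell)$, and every newly created consecutive pair is $(x_\ell,x_\ell)$, contributing $0$ to $\max_i d_X(x_{i-1},x_i)$; hence this $k$-tuple realizes in the definition of $\ult_k$ exactly the value that $(x_1,\dots,x_\ell)$ realizes in the definition of $\ult_\ell$. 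Maximizing over $\ell$-tuples gives $\ult_k(X)\ge\ult_\ell(X)$ for every $X$.

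\emph{Strictness.} I would test the two functions on the $k$-point path $P=\{0,1,\dots,k-1\}\subset\R$ with the restricted metric. The key elementary estimate is that for any $m$-tuple $(y_1,\dots,y_m)$ of reals the triangle inequality over the $m-1$ steps gives $|y_1-y_m|\le (m-1)\max_i|y_{i-1}-y_i|$, whence $|y_1-y_m|-\max_i|y_{i-1}-y_i|\le\frac{m-2}{m-1}\,|y_1-y_m|\le\frac{m-2}{m-1}(k-1)$ once the $y_i$ lie in $P$ (so $|y_1-y_m|\le\diam(P)=k-1$). With $m=k$ this yields $\ult_k(P)\le k-2$, and the chain $(0,1,2,\dots,k-1)$ attains $k-2$, so $\ult_k(P)=k-2$. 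With $m=\ell$ it yields $\ult_\ell(P)\le\frac{\ell-2}{\ell-1}(k-1)$, and cross-multiplying shows $\frac{(\ell-2)(k-1)}{\ell-1}<k-2$ since $(k-1)(\ell-2)-(k-2)(\ell-1)=\ell-k<0$. Therefore $\ult_\ell(P)<k-2=\ult_k(P)$, so $\ult_k\neq\ult_\ell$; combined with the previous step this records $\ult_k\ge\ult_\ell$ as well.

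\emph{Separation of functors.} Finally, evaluating both functors on $X=P$ at the top simplex $\sigma=P$, and using $\iota_P(P)=P$, I get $\Phi^{\ult_k}_P(P)=\ult_k(P)=k-2$ while $\Phi^{\ult_\ell}_P(P)=\ult_\ell(P)<k-2$, so the filtrations $\Phi^{\ult_k}_P$ and $\Phi^{\ult_\ell}_P$ already differ, hence $\Phi^{\ult_k}\neq\Phi^{\ult_\ell}$. No step poses a genuine obstacle; the only points needing care are the arithmetic comparison $\frac{(\ell-2)(k-1)}{\ell-1}<k-2$ (where $\ell\ge 3$, making the coefficient positive, and $\ell<k$ are precisely what is used) and checking that the padding construction respects the index range in the definition of $\ult_k$.
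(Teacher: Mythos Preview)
Your proof is correct. The monotonicity step (padding an $\ell$-tuple to a $k$-tuple by repeating the last entry) and the separation of functors (evaluating at the top simplex of the path space) are essentially identical to what the paper does.

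The strictness step uses the same witness space $P=\{0,1,\dots,k-1\}\subset\R$ as the paper, but your argument for $\ult_\ell(P)<k-2$ is genuinely different and more streamlined. The paper argues by contradiction: assuming some $\ell$-tuple achieves value $\ge k-2$, it does a case split on whether $|x_\ell-x_1|=k-2$ or $k-1$ and derives a contradiction in each case via ad hoc combinatorics. You instead extract a uniform upper bound directly from the triangle inequality, $|y_1-y_m|-\max_i|y_{i-1}-y_i|\le\frac{m-2}{m-1}\diam(P)$, and then reduce the comparison $\frac{\ell-2}{\ell-1}(k-1)<k-2$ to the single arithmetic fact $\ell-k<0$. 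This buys you a shorter proof with no case analysis, and the intermediate inequality is reusable (it also gives the upper bound $\ult_k(P)\le k-2$ for free). The paper's approach, by contrast, yields slightly more structural information about which tuples can be extremal, though that information is not used elsewhere.
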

\begin{proof}
	First let us prove the inequality. Let $k> \ell \geq 3$. Let $x_1,...,x_\ell\in X$ be a path of $\ell$ points in $X$. We can build a path of $k$ points by defining $x_i=x_k$ for $\ell <i \leq k$. From this, it follows that,
    $$ d_X(x_i,x_\ell)-\max_{1< i\leq \ell}\{d_X(x_{i-1},x_i)\}=  d_X(x_i,x_k)-\max_{1< i\leq k}\{d_X(x_{i-1},x_i)\}.$$
    This shows that any path of $\ell$ points can be considered as a degenerated path of $k$ points. Since the maximum on $\ult_{k}$ considers paths with at most $k$ points, and $\ult_\ell$ just paths of $\ell$ points, it follows that,
\begin{eqnarray*}
    \ult_{\ell}(X)&=& \max_{x_1,...,x_\ell\in X} d_X(x_i,x_\ell)-\max_{1< i\leq \ell}\{d_X(x_{i-1},x_i)\}\\
    &\leq & \max_{x_1,...,x_k\in X} d_X(x_i,x_k)-\max_{1< i\leq k}\{d_X(x_{i-1},x_i)\}\\
    &=&\ult_k(X). 
\end{eqnarray*} 
 To prove that they are different, we must exhibit a space in which they differ. Let $X_k= \{1,...,k\}$, endowed with the subspace metric induced by $\R$. We will prove that $\ult_k(X_k)=k-2$ and $\ult_{\ell}(X_k)<k-2$. We can see that,
	$$\ult_k(X)\geq |k-1|-\max\{|2-1|,...,|k-(k-1)|\}= k-2.$$
Now, if we choose any path such that $x_1,x_k$ are not $1$ and $k$ then, $|x_k-x_1|\leq k-2$ and then $|x_k-x_1|-\max\{|x_2-x_1|,...,|x_k-x_{k-1}|\}\leq k-2$. If $x_1=1$ and $x_k=k$, then for some $i\in \{2,...,k\}$, $|x_{i}-x_{i-1}|\geq 1$ and then,
$$ |x_k-x_1|-\max_{1< i\leq k}\{|x_{i}-x_{i-1}|\}\leq k-2.$$
It follows that $\ult_k(X_k)= k-2.$
Now, let us prove that $\ult_\ell(X_k)<k-2$. We will prove this by contradiction. Let assume that $\ult_\ell(X_k)\geq k-2$. Then, there is a path $x_1,...,x_\ell$ such that,
$$ |x_\ell-x_1|-\max_{1< i\leq \ell}\{|x_{i},x_{i-1}|\}\geq k-2. \hspace{1cm} (\ast)$$
This implies that $|x_\ell-x_1|\geq k-2.$ If $|x_\ell-x_1|= k-2$ then $\max_{1< i\leq \ell}\{|x_{i}-x_{i-1}|\}=0$. From this, it follows that $x_1=x_2=...=x_\ell$. This holds a contradiction since $x_1\neq x_\ell$.
Now, if $|x_\ell-x_1|>k-2$ then $|x_\ell-x_1|= k-1$ and, without loss of generality, $x_1= 1$ and $x_\ell= k$. From $(\ast)$, it follows that $\max_{1< i\leq \ell}\{|x_{i}-x_{i-1}|\}\leq 1$. Then,
	$$|x_i|\leq |x_1|+|x_2-x_1|+...+|x_i-x_{i-1}|\leq  i.$$
Hence, $x_{\ell-1}<\ell-1$ and, 
    $$|x_{\ell-1}-x_\ell|\geq  k-(\ell-1)\geq 2.$$ 
    We conclude that $\max_{2\leq i\leq \ell} |x_{i}-x_{i-1}|\geq 2$, from which $(\ast)$ does not hold.
    Since $\ult_k(X_k)\neq \ult_\ell(X_k)$, then $\Phi_{X_k}^{\ult_k}(X_k)\neq \Phi_{X_k}^{\ult_\ell}$, and this filtration functors are different.
\end{proof}

We will see computational experiments related to $\Phi^{\ult}$ in Section \ref{sec:exp}.

\subsection{The hyperbolicity functor}
 Given a metric space $X\in\M$, we can define the hyperbolicity deviation function \cite{phylo} to be 
 \begin{eqnarray*}
 	\hd_X(x_1,x_2,x_3,x_4)&:=& \frac{1}{2}\big(d_X(x_1,x_2)+d_X(x_3,x_4)\\
    & & \hspace{1cm}- \max\{d_X(x_1,x_3)+d_X(x_2,x_4),d_X(x_1,x_4)+d_X(x_2,x_3) \}\big),
 \end{eqnarray*}
for all $x_1,x_2,x_3,x_4\in X$. With this, we can define the hyperbolicity of a metric space, given by:
 $$ \hyp(X):= \max_{x_1,x_2,x_3,x_4\in X} \hd_X(x_1,x_2,x_3,x_4), \hspace{1cm} \forall \, X\in \M.$$
 
 This quantity measures how far a metric space is from being a tree-shaped space. From these, we define a filtration functor $\Phi^{\hyp}: \M\rightarrow \F$ to be, 
 $$ \Phi^{\hyp}_X(\sigma):= \hyp(\iota_X(\sigma)) \hspace{1cm} \forall \, X\in \M, \, \sigma\subset X.$$
 
 Analogously to the proof of Proposition \ref{prop-ult}, we can prove stability and locality of the hyperbolicity filtration functor.
 
\begin{prop}\label{hyp_prop}
 The map $\Phi^{\hyp}$ is a well defined $4-$local filtration functor, generated by the valuation:
 $$\nu_{\hyp}(A)= \frac{1}{2}\left(\max_{a\in A} \left(a_{12}+a_{34}-\max\{a_{13}+a_{24},a_{14}+a_{23}\} \right)\right). $$
 Furthermore, this valuation is $2-$stable.
\end{prop}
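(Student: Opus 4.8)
The plan is to mirror the three-step structure of the proof of Proposition \ref{prop-ult}: establish well-definedness by checking monotonicity, identify $\hyp\circ\iota_X$ with a valuation evaluated on $4$-point curvature sets, and then prove $2$-stability of that valuation by a Hausdorff estimate. Combined with Theorem \ref{lstablevaluation}, the last item also yields that $\Phi^{\hyp}$ is $4$-stable.

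For well-definedness I would first record that $\hd_X\ge 0$ is attained on degenerate tuples — e.g. $\hd_X(x,x,x,x)=0$, or more vividly $\hd_X(x,x',x',x)=\tfrac12\big(2d_X(x,x')-2d_X(x,x')\big)=0$ — so that $\Phi^{\hyp}_X(\sigma)=\hyp(\iota_X(\sigma))$ indeed takes values in $\R_+$. Monotonicity is then the same observation as for $\ult$: if $\tau\subseteq\sigma\subseteq X$ then every $4$-tuple of points of $\tau$ is also a $4$-tuple of points of $\sigma$, so the maximum defining $\hyp(\iota_X(\tau))$ is over a subfamily of the maximum defining $\hyp(\iota_X(\sigma))$, whence $\Phi^{\hyp}_X(\tau)\le\Phi^{\hyp}_X(\sigma)$. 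So $\Phi^{\hyp}_X$ is a filtration on each $X$ and $\Phi^{\hyp}$ is a filtration functor.

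For locality I would write, for $X\in\M$ and $\sigma\subseteq X$, using that $d_X(x_i,x_j)$ is the $(i,j)$-entry of $D_X^{(4)}(x_1,x_2,x_3,x_4)$,
\begin{eqnarray*}
\Phi^{\hyp}_X(\sigma)&=&\max_{x_1,x_2,x_3,x_4\in\sigma}\tfrac12\Big(d_X(x_1,x_2)+d_X(x_3,x_4)-\max\big\{d_X(x_1,x_3)+d_X(x_2,x_4),\,d_X(x_1,x_4)+d_X(x_2,x_3)\big\}\Big)\\
&=&\max_{a\in\cset_4(\iota_X(\sigma))}\tfrac12\Big(a_{12}+a_{34}-\max\{a_{13}+a_{24},\,a_{14}+a_{23}\}\Big)\\
&=&\nu_{\hyp}\big(\cset_4(\iota_X(\sigma))\big),
\end{eqnarray*}
so $\Phi^{\hyp}=\Phi^{\nu_{\hyp}}$ with $\nu_{\hyp}\in\mathfrak V_4$ (monotonicity of $\nu_{\hyp}$ is clear, since enlarging $A$ can only enlarge the maximum). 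Hence $\Phi^{\hyp}$ is $4$-local. As a bonus observation paralleling the $\ult$ case, I would note that $\hyp\equiv 0$ on every metric space of at most three points — any $4$-tuple there repeats a coordinate, and $\hd_X$ then degenerates to a triangle-inequality consequence that is $\le 0$ — so $\Phi^{\hyp}$ adds the entire $2$-skeleton of any space at time $0$.

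The step deserving care is $2$-stability. Given nonempty $A,B\subset\R^{4\times4}$, set $\delta=\dH^{(\R^{4\times4},\ell_\infty)}(A,B)$, pick $a^0\in A$ realizing the maximum in $\nu_{\hyp}(A)$, and pick $b^0\in B$ with $\|a^0-b^0\|_\infty\le\delta$. Writing $g(a)=a_{12}+a_{34}-\max\{a_{13}+a_{24},a_{14}+a_{23}\}$, one gets $\nu_{\hyp}(A)-\nu_{\hyp}(B)\le\tfrac12\big(g(a^0)-g(b^0)\big)$; the two linear differences $a^0_{12}-b^0_{12}$ and $a^0_{34}-b^0_{34}$ are each $\le\delta$ in modulus, while the difference of the two inner maxima is bounded using $|\max\{u_1,u_2\}-\max\{v_1,v_2\}|\le\max_i|u_i-v_i|$ with the $u_i,v_i$ taken to be the pairwise sums $a_{13}+a_{24}$, $a_{14}+a_{23}$ (resp. for $b^0$), each of which moves by at most $2\delta$ under the perturbation; thus $g(a^0)-g(b^0)\le 4\delta$, so $\nu_{\hyp}(A)-\nu_{\hyp}(B)\le 2\delta$, and symmetrizing in $A,B$ gives $|\nu_{\hyp}(A)-\nu_{\hyp}(B)|\le 2\,\dH^{(\R^{4\times4},\ell_\infty)}(A,B)$, i.e. $\nu_{\hyp}\in\mathfrak V_4^2$. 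The main thing to watch is to apply the $1$-Lipschitz bound for $\max$ at the level of the pairwise sums (which move by $2\delta$, not $\delta$) and to keep careful track of the overall factor $\tfrac12$; otherwise the computation is routine. This is the only place I expect any friction.
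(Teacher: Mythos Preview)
Your proposal is correct and follows exactly the approach the paper intends: the paper omits the proof entirely, stating only that it proceeds ``analogously to the proof of Proposition~\ref{prop-ult}'', and your three-step structure (monotonicity, identification with $\nu_{\hyp}\circ\cset_4\circ\iota_X$, Hausdorff estimate for $2$-stability) mirrors that argument precisely. Your bookkeeping on the $2$-stability bound---tracking that the inner pairwise sums move by at most $2\delta$ and that the overall factor $\tfrac12$ brings $4\delta$ down to $2\delta$---is the only place requiring care, and you handle it cleanly.
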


As we mentioned before about the ultrametricity filtration, it is worth noticing that this filtration functor adds all the $3-$dimensional skeleton at time zero, since the hyperbolicity of a 3-simplex is zero. Then, the first interesting persistence diagram to compute is the fourth-dimensional.

Although the computational complexity of this filtration is polynomial, it is hard to give explicitly since it relies on at least $\binom{n}{4}$ computations. This quantitiy scales as $O(n^4)$, which makes this filtration computationally difficult. Nevertheless, it is still a natural filtration to use when one is concerned with the ``treeness'' of a space.

\section{Basepoint Filtration Functors}

A different way of broadening our understanding of filtration functor is to generate new concepts of what a filtration is. It is possible to exploit further information about our spaces through filtrations that depend on choosing a basepoint that provides perspective. 

\defn{For any $X\in \M$, a   \emph{basepoint family of filtrations on $X$} is any collection 
$$\{\psi_X(x_0):\pow (X) \rightarrow \raggedbottom \F \}_{x_0\in X},$$ where for all $x_0\in X$, $\psi_X(x_0)$ is a filtration on $X$. We call $X$ the base space of the basepoint family of filtrations.\\
}

With a basepoint family of filtration, instead of choosing a single filtration for the space, for each point on the base space we have a different way of filtering it. We can also define a joint rule that, for each space in $X\in \M$, for each point $x_0$ in $X$, we assign a filtration of $X$ based on $x_0$.

\defn{A \emph{basepoint filtration functor} is a map $\Psi:\M \rightarrow \pow(\F)$, such that $\Psi$ takes any $X\in \M$ to a basepoint family of  filtrations $\Psi_X= \{\psi_X(x_0): \pow(X) \rightarrow \R \}_{x_0\in X}$.} 
Furthermore, for the property of being a functor we ask that if $(X,d_X),(Y,d_Y)\in \M$ and $\phi: X\rightarrow Y$ is such that $\forall x,x'\in X$, $d_X(x,x')\ge d_Y(\phi(x),\phi(x'))$, and if we let $\Psi_{X,x_0}=\psi_X(x_0)$, then for all $\sigma\subseteq X$ and $x_0\in X$, one has $\Psi_{X,x_0}(\sigma)\geq \Psi_{Y,\phi(x_0)}(\phi(\sigma)).$\\

Note that any filtration as we have previously defined gives rise to a basepoint filtration.

\begin{ex}\label{ex:constantbff}[Constant basepoint filtration functors]
Let $\Phi:\M \rightarrow \F$ be any filtration functor. We define the constant basepoint filtration functor $\Psi^{\mathrm{const}}$ to be
	$$\Psi^\mathrm{const}_X(x_0)= \Phi_X, \hspace{1cm} \forall \, X\in \M, \, x_0\in X.$$
For example, for all $x_0\in X$, one could define $\Psi_X(x_0)$ to be the Vietoris-Rips filtration. Then this basepoint filtration functor would be equivalent to the general Vietoris-Rips filtration.
\end{ex}

As before, we have to impose stability conditions on this filtrations so that there is a relation not only among spaces, but also among filtrations on same spaces with different basepoints. First, we define the cost function of a filtration functor, and then we use it to define a notion of stability.

\defn{\label{def:cost}
Given a basepoint filtration functor $\Psi$, $X,Y\in\mathcal{M}$ and $k\geq 0$ consider the function $\mathcal{C}_{\Psi,k}:X\times Y\rightarrow \R_+$ given by 
$$(x,y)\mapsto\dB\big(\dgm_k^{\Psi_X(x)}(X),\dgm_k^{\Psi_Y(y)}(Y)\big).$$ This function is called the \emph{k-dimensional cost function induced by $\Psi$.} }

\defn{
	Let $L>0$. Given a basepoint filtration functor $\Psi$, we say it is an \emph{L-}stable basepoint filtration functor if, for all $k\geq 0$,
    
    $$ \min_{R\in\mathcal{R}(X,Y)}\max_{(x_0,y_0)\in R}\mathcal{C}_{\Psi,k}(x_0,y_0)\leq L\cdot\dgh(X,Y), $$
for all $X,Y\in \M$.
}

Why are we not saying that a basepoint filtration functor is stable if for \emph{all pairs} $(x,y)\in X\times Y$ the above inequality holds? The issue is that such a restriction is too strong---it is possible that an arbitrary pair of basepoints will be incomparable to one another, in a sense determined by the user. For example, when comparing two images of a cat, it may not make sense to choose the tip of the tail as one basepoint and a point at the center of its body as another (see Section \ref{sec:exp}). 

Instead of asking for any pairwise comparison to be well-behaved, we define a functor to be stable if there is a set $R\subset X\times Y$ such that the comparisons of points in it are meaningful, and that the projections of the pairs in $R$ sufficiently cover both $X$ and $Y$.

\subsection{Local Basepoint Filtration Functors}
Let $n\in \N$. For all $X\in \M$, $x\in X$ and $\sigma \subset X$, we define the \emph{basepoint n-th curvature set of $\sigma$} to be,

$$ \cset_n(x_0,\sigma) :=  \big(D^{(n+1)}_X(x_0,\sigma^n)\big) \subset \R^{(n+1)\times (n+1)}.$$
Consider the projection $\pi_n:\R^{(n+1)\times (n+1)}\rightarrow \R^{n\times n}$ given by 
$$\big(a_{ij}\big)_{i,j=1}^{n+1}\mapsto \big(a_{ij}\big)_{i,j=2}^{n+1}.$$
Notice that for every $x_0\in X$ one has $\pi_n(\cset_n(x_0,\sigma)) = \cset_n(\iota_X(\sigma)).$

Given a natural number $n$ and any stable valuation $\nu_{n+1}$ one can construct the  basepoint filtration functor $\Psi^{\nu_{n+1}}$ defined as follows:  $X\in\mathcal{M}$, 

$$\Psi^{\nu_{n+1}}_X:X\rightarrow \F,$$
$$ x\mapsto \Psi^{\nu_{n+1}}_X(x),$$

where the time of arrival of a simplex $\sigma\subset X$ is given by $$\Psi^{\nu_{n+1}}_X(x)(\sigma)= \nu_{n+1}(\cset(x,\sigma) ).$$

The family $\Psi^{\nu_{n+1}}$ is a basepoint family of filtration functors induced by $\nu_{n+1}.$ This is a possible path through which we can generate basepoint filtration functors. But it has a disadvantage. Although we are considering information about the basepoint and the simplex, we are not taking account of the particular position of the basepoint in the space. This could be measured by other global quantities that cannot be computed using only the basepoint curvature sets, which are intrinsically local. For this, we define first the notion of functions that describe the point in the whole space.

For this, let $\M_\ell$ be the collection of all triplets $(X,d_X,f_X)$ where $(X,d_X)$ is a finite metric space and $f_X: X\rightarrow \R^{\ell} $ is a function. We will think of a map from $\M$ to $\M_\ell$ to be a relation that, for each $X\in \M$ and a point $x_0\in X$, assigns a set of quantities that describe the position of the point with respect to the space.

\begin{defn}
A \emph{point descriptor} $\rho$ is a functor from $\M$ to $\M_\ell$,
$$\rho:\M\rightarrow \M_\ell$$
$$X\in \M \mapsto \rho_X:X\rightarrow \R^\ell $$
such that there exists a constant $K>0$ such that for all $X,Y\in \M$ and $R\subset X\times Y$ correspondence,
$$ \max_{(x_0,y_0)\in R}|\rho_X(x_0)-\rho_Y(y_0)|\leq K \cdot \dis(R).$$ 
If this condition is satisfied by $K\geq 0$ we say this point descriptor is \emph{$K-$stable}.
\end{defn}

Once we defined the notion of point descriptors, we need a way to match it with assigning a time of arrival for each simple, such that those satisfy the monotonicity condition. This will be fulfilled by the definition of adjusted valuations.

\begin{defn}[Adjusted valuation]
For some $\ell\in \N$, an \emph{adjusted valuation} is a map $$\nu_{n,\ell}:\pow(\R^{n\times n})\times \R^\ell \rightarrow \R_+$$ with an altered version of monotonicity:  for any fixed $v\in \R^\ell$, $\nu_{n,\ell}(A\times v)\geq \nu_{n,\ell}(B\times v)$ for all $B\subset A\in \pow(\R^{n\times n})$.
We will say that an adjusted valuation $\nu_{n,\ell}$ is \emph{$L$-stable} if for all $A,B\in\pow(\R^{n\times n})$ and $v,w\in \R^\ell$,
$$ |\nu_{n,\ell}(A,v_1)- \nu_{n,\ell}(B,w)|\leq L\cdot\max\{\dH(A,B), ||v-w||_\infty\}.$$
\end{defn}

We can combine the notion of a point descriptor and an adjusted valuation to generate interesting basepoint filtration functors. These will be more general than the straightforward filtrations we obtain by just applying valuations to the basepoint curvature set. And they can carry and exploit more information without losing their computability.

\begin{defn}[Local basepoint filtration functor]
A basepoint filtration functor $\Psi$ is a \emph{$(n,\ell)-$local} if there exists a point descriptor $\rho$ with image in $\R^\ell$, an adjusted valuation $\nu_{n,\ell}:\pow(\R^{(n+1)\times (n+1)})\times \R^\ell \rightarrow \R_+$ such that for any $X\in \M$ and $x_0\in X$:
$$\Psi_X(x_0)(\sigma )=\nu_{n,\ell}(\cset_n(x_0,\sigma) , \rho(x_0)),\hspace{1cm}\forall \, \sigma \subset X.$$
\end{defn}

\subsection{Stability Results for Local Basepoint Filtrations}
\begin{thm}
\label{thm:kn-local}
Let $\Psi$ be a local basepoint filtration functor. Let $\nu_{n,\ell}$ be a $L-$stable adjusted valuation and $\rho$ a $K-$stable point descriptor with image in $\R^{\ell}$. Lets assume that $\Psi$ is generated by $\nu_{n,\ell}$ and $\rho$. Then, for all $k\geq 0$,
 $$ \min_{R\in \mathcal{R}(X,Y)}\max_{(x_0,y_0)\in R}\mathcal{C}_{\Psi,k}(x_0,y_0)\leq 2L\cdot\max\{1,K\}\cdot \dgh(X,Y).$$
\end{thm}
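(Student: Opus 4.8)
The plan is to reduce to Theorem~\ref{thm:tripods} exactly as in the proof of Theorem~\ref{lstablevaluation}, but now dragging the point-descriptor term along. Fix $X,Y\in\M$ and let $R\in\mathcal{R}(X,Y)$ be a correspondence realizing the Gromov--Hausdorff distance, so that $\dis(R)=2\dgh(X,Y)$. I will in fact show that this \emph{single} correspondence $R$ already witnesses the inequality, i.e. that $\mathcal{C}_{\Psi,k}(x_0,y_0)\le 2L\cdot\max\{1,K\}\cdot\dgh(X,Y)$ for \emph{every} $(x_0,y_0)\in R$ and every $k\ge 0$; since $\mathcal{R}(X,Y)$ is finite the claimed bound on $\min_{R}\max_{(x_0,y_0)\in R}$ then follows immediately. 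By Theorem~\ref{thm:tripods} applied to the filtered spaces $(X,\Psi_X(x_0))$ and $(Y,\Psi_Y(y_0))$, it is enough to bound $d_{\F}\big((X,\Psi_X(x_0)),(Y,\Psi_Y(y_0))\big)$, and for this I use the tripod $(R,\pi_1,\pi_2)$ given by $R$ together with its (surjective) coordinate projections onto $X$ and $Y$.

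So let $S\subset R$ be arbitrary; by definition of $d_\F$ it suffices to bound $\big|\Psi_X(x_0)(\pi_1(S))-\Psi_Y(y_0)(\pi_2(S))\big|$. Since $\Psi$ is $(n,\ell)$-local and generated by $\nu_{n,\ell}$ and $\rho$, this quantity equals $\big|\nu_{n,\ell}\big(\cset_n(x_0,\pi_1(S)),\rho_X(x_0)\big)-\nu_{n,\ell}\big(\cset_n(y_0,\pi_2(S)),\rho_Y(y_0)\big)\big|$, which by $L$-stability of $\nu_{n,\ell}$ is at most $L\cdot\max\big\{\dH\big(\cset_n(x_0,\pi_1(S)),\cset_n(y_0,\pi_2(S))\big),\ \|\rho_X(x_0)-\rho_Y(y_0)\|_\infty\big\}$. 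The second entry is handled at once: $R$ is a correspondence and $\rho$ is $K$-stable, so $\|\rho_X(x_0)-\rho_Y(y_0)\|_\infty\le K\cdot\dis(R)=2K\cdot\dgh(X,Y)$.

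For the first entry I need a basepoint analogue of Theorem~\ref{thm:cset}, namely $\dH\big(\cset_n(x_0,\pi_1(S)),\cset_n(y_0,\pi_2(S))\big)\le 2\,\dgh(X,Y)$. The key observation is that $S\cup\{(x_0,y_0)\}$ is again contained in $R$, hence is a correspondence between $\pi_1(S)\cup\{x_0\}$ and $\pi_2(S)\cup\{y_0\}$ of distortion $\le\dis(R)$. Given any $D_X^{(n+1)}(x_0,x_1,\dots,x_n)\in\cset_n(x_0,\pi_1(S))$ with $x_1,\dots,x_n\in\pi_1(S)$, choose $y_i\in\pi_2(S)$ with $(x_i,y_i)\in S$; then $D_Y^{(n+1)}(y_0,y_1,\dots,y_n)\in\cset_n(y_0,\pi_2(S))$ and every entrywise difference $|d_X(x_i,x_j)-d_Y(y_i,y_j)|$ — including those involving the $0$-th coordinate, handled via the pair $(x_0,y_0)$ — is bounded by $\dis(R)=2\dgh(X,Y)$. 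The symmetric argument gives the Hausdorff bound. Substituting both entries into the estimate of the previous paragraph yields $\big|\Psi_X(x_0)(\pi_1(S))-\Psi_Y(y_0)(\pi_2(S))\big|\le L\cdot\max\{2,2K\}\cdot\dgh(X,Y)=2L\cdot\max\{1,K\}\cdot\dgh(X,Y)$. As $S\subset R$ was arbitrary we conclude $d_\F\big((X,\Psi_X(x_0)),(Y,\Psi_Y(y_0))\big)\le 2L\cdot\max\{1,K\}\cdot\dgh(X,Y)$, and Theorem~\ref{thm:tripods} completes the argument.

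I expect the only step with genuine content — as opposed to bookkeeping — to be the basepoint-curvature-set Hausdorff bound: one must verify that the basepoint $x_0$ can be matched to $y_0$ \emph{simultaneously} with a consistent matching of the remaining $n$ coordinates, which is precisely why one enlarges $S$ to $S\cup\{(x_0,y_0)\}$ inside the fixed optimal $R$ rather than working with an abstract sub-correspondence between $\pi_1(S)$ and $\pi_2(S)$. Everything else is a transcription of the proofs of Theorems~\ref{lstablevaluation} and~\ref{thm:tripods}, with the adjusted-valuation stability inequality supplying the extra $\rho$-term.
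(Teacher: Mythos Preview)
Your proof is correct and follows essentially the same approach as the paper: choose an optimal correspondence $R$, use it as a tripod, apply $L$-stability of the adjusted valuation and $K$-stability of the point descriptor, bound the Hausdorff distance between basepoint curvature sets entrywise via $\dis(R)$, and invoke Theorem~\ref{thm:tripods}. Your explicit handling of the $0$-th coordinate by enlarging $S$ to $S\cup\{(x_0,y_0)\}$ is exactly the step the paper performs implicitly; the remark that $\mathcal{R}(X,Y)$ is finite is harmless but unnecessary, since exhibiting a single $R$ already bounds the minimum.
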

\begin{proof}[Proof of Theorem \ref{thm:kn-local}]
Let $X,Y\in \M$ and $R_0\in \mathcal{R}(X,Y)$ such that $\frac{1}{2}\dis(R_0)= \dgh(X,Y)$. Now, notice that if $\pi_X,\pi_Y$ are the canonical projections from $R_0$ to $X$ and $Y$ respectively, $(R_0,\pi_X,\pi_Y)$ is a triplet with a set and surjective maps from it to $X$ and $Y$, respectively.

Now, let $\sigma\subset R_0$, and $(x,y)\in R_0$. Notice that $(\sigma,\pi_X|_\sigma,\pi_Y|_\sigma)$ is a triplet of a set and two surjective maps to $\pi_X(\sigma)$ and $\pi_Y(\sigma).$ Let $\sigma_X=\pi_X(\sigma)$ and $\sigma_Y= \pi_Y(\sigma)$ be the metric spaces generated by the projections union the basepoint. We observe that,
\begin{eqnarray*}
|\Psi_X(x)(\pi_X(\sigma))-\Psi_Y(y)(\pi_Y(\sigma))|&=& |\nu_{n,\ell}(\cset_n(x,\sigma_X),\rho_X(x)) - \nu_{n,\ell}(\cset_n(y,\sigma_Y),\rho_Y(y)) |\\
&\leq & L \cdot \max\{\dH(\cset_n(x,\sigma_X ),\cset_n(y,\sigma_Y )) ,\\
& & \hspace{1cm} ||\rho_X(x)-\rho_Y(y)||_\infty\}.\\
\end{eqnarray*}
 From the definition of stability of point descriptors, we see that,
$$ ||\rho_X(x)-\rho_Y(y)||_\infty \leq K\cdot\dis(R_0)= 2K\cdot\dgh(X,Y).$$

Now, let $\alpha= D^{(n+1)}_X(x_0,x_1,...,x_n)\in \cset_n(x,\sigma_X)$. There are $y_1,...,y_n$ such that $(x_i,y_i)\in \sigma$ for all $i\in \{1,...,n\}$. Then, if $\beta= D^{(n+1)}_Y(y_0,y_1,...,y_n),$
\begin{eqnarray*}
||\alpha-\beta||_\infty&= &\max_{0\leq i,j\leq n}\left\{|d_X(x_i,x_j)-d_Y(y_i,y_j)| \right\}\\
&\leq & \dis(R_0)\\
&=& 2\cdot\dgh(X,Y).
\end{eqnarray*}
Then, 
\begin{eqnarray*}
|\Psi_X(x)(\pi_X(\sigma))-\Psi_Y(y)(\pi_Y(\sigma))|&\leq &L\cdot\max\{2\cdot\dgh(X,Y), 2K\cdot\dgh(X,Y)\}\\
&=& 2L\cdot \max\{1,K\}\cdot \dgh(X,Y).
\end{eqnarray*}
It follows that for all $(x_0,y_0)\in R_0$,
$$d_{\mathcal{F}}((X, \Psi_X(x_0)),(X, \Psi_Y(y_0)) )\leq  2L\cdot\max\{1,K\}\cdot \dgh(X,Y) .$$
From this, we conclude that,
\begin{eqnarray*}
	\min_{R\in\mathcal{R}(X,Y)}\max_{(x_0,y_0)\in R} d_{\mathcal{F}}((X, \Psi_X(x_0)),(X, \Psi_Y(y_0)) )&\leq &\max_{(x_0,y_0)\in R_0} d_{\mathcal{F}}((X, \Psi_X(x_0)),(X, \Psi_Y(y_0)) )\\ 
    &\leq&  2L\cdot\max\{1,K\}\cdot \dgh(X,Y).
\end{eqnarray*}

From Theorem \ref{thm:tripods}, we conclude that for any $k\geq 0$,
	$$\min_{R\in\mathcal{R}(X,Y)}\max_{(x_0,y_0)\in R}\mathcal{C}_{\Psi,k}(x_0,y_0)\leq 2L\cdot\max\{2,K\}\cdot \dgh(X,Y).$$
    
\end{proof}

This theorem is the most general stability theorem we prove in this work. It is a generalization of Theorem \ref{lstablevaluation}, considering Example \ref{ex:constantbff}.  

We know that the local filtration functors are well behaved when generated by stable adjusted valuations and point descriptors. Now, we must consider a different type of stability. It should be true that changing the basepoint transforms the diagrams in a continuous way. This would align to the idea that the filtration depends on the perspective of the point; if the perspective changes by a small distance, the induced diagrams should incur small changes.

\begin{prop}\label{Stab_loc}
Let $X\in \M$ and $x,x'\in X$. Let $\Psi$ be a local filtration functor. Let  $\nu_{n,\ell}$ be an $L-$stable adjusted valuation and $\rho$ a $K-$stable point descriptor with image in $\R^{\ell}$. Let us assume that $\nu_{n,\ell}$ and $\rho$ generate the basepoint filtration functor $\Psi$. Then, for all $k\in \N,$
$$\dB(\dgm^{\Psi_X(x)}_k(X),\dgm^{\Psi_X(x')}_k(X)) \leq L\cdot\max\{1,K\}\cdot d_X(x,x'). $$
\end{prop}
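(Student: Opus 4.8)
The plan is to reduce the bottleneck estimate to a comparison of filtrations on the \emph{same} underlying set via Theorem~\ref{thm:tripods}, using the trivial tripod, and then to control the resulting simplex-wise discrepancy through the $L$-stability of the adjusted valuation $\nu_{n,\ell}$.

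First I would take $Z = X$ with both surjections equal to $\id_X$ in the definition of $d_{\mathcal{F}}$, which immediately gives
\[
d_{\mathcal{F}}\big((X,\Psi_X(x)),(X,\Psi_X(x'))\big)\;\le\;\max_{\sigma\subset X}\big|\Psi_X(x)(\sigma)-\Psi_X(x')(\sigma)\big|.
\]
Since $\Psi$ is $(n,\ell)$-local and generated by $\nu_{n,\ell}$ and $\rho$, for each $\sigma\subset X$ we have $\Psi_X(x)(\sigma)=\nu_{n,\ell}(\cset_n(x,\sigma),\rho_X(x))$, so by $L$-stability of $\nu_{n,\ell}$,
\[
\big|\Psi_X(x)(\sigma)-\Psi_X(x')(\sigma)\big|\;\le\; L\cdot\max\big\{\dH(\cset_n(x,\sigma),\cset_n(x',\sigma)),\ \|\rho_X(x)-\rho_X(x')\|_\infty\big\}.
\]
It then remains to bound both terms inside the maximum by $\max\{1,K\}\cdot d_X(x,x')$, uniformly in $\sigma$.

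For the point-descriptor term, I would apply $K$-stability of $\rho$ to the correspondence $R=\{(z,z):z\in X\}\cup\{(x,x')\}\in\mathcal{R}(X,X)$; a one-line reverse-triangle-inequality check shows $\dis(R)\le d_X(x,x')$, hence $\|\rho_X(x)-\rho_X(x')\|_\infty\le K\cdot d_X(x,x')$. For the curvature-set term, recall that $\cset_n(x_0,\sigma)$ is the image of the map $(x_1,\dots,x_n)\mapsto D_X^{(n+1)}(x_0,x_1,\dots,x_n)$ on $\sigma^n$. The key observation is that one should compare the two sets not via a distortion argument but by holding the auxiliary tuple fixed: pairing $D_X^{(n+1)}(x,x_1,\dots,x_n)$ with $D_X^{(n+1)}(x',x_1,\dots,x_n)$, the two matrices agree off the first row and column, while every differing entry has the form $d_X(x,x_j)-d_X(x',x_j)$, which is at most $d_X(x,x')$ in absolute value. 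This yields $\dH(\cset_n(x,\sigma),\cset_n(x',\sigma))\le d_X(x,x')$. Combining the two bounds gives $|\Psi_X(x)(\sigma)-\Psi_X(x')(\sigma)|\le L\cdot\max\{1,K\}\cdot d_X(x,x')$ for every $\sigma$, hence $d_{\mathcal{F}}\le L\cdot\max\{1,K\}\cdot d_X(x,x')$, and Theorem~\ref{thm:tripods} (applied to the pair $(X,\Psi_X(x)),(X,\Psi_X(x'))$) concludes the proof for every $k\in\N$.

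The only genuinely non-mechanical step is the curvature-set estimate: everything hinges on recognizing that the right comparison between $\cset_n(x,\sigma)$ and $\cset_n(x',\sigma)$ is the one that changes only the basepoint coordinate and leaves the remaining $n$-tuple untouched, after which the bound is a direct triangle inequality. The remaining parts are bookkeeping with the definitions of $d_{\mathcal{F}}$, locality, and the two stability constants.
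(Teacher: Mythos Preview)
Your proposal is correct and follows essentially the same route as the paper's own proof: the trivial tripod $(X,\id_X,\id_X)$, the $L$-stability of $\nu_{n,\ell}$, the correspondence $R=\{(z,z)\}_{z\in X}\cup\{(x,x')\}$ to control $\|\rho_X(x)-\rho_X(x')\|_\infty$, and the pairing $D_X^{(n+1)}(x,x_1,\dots,x_n)\leftrightarrow D_X^{(n+1)}(x',x_1,\dots,x_n)$ for the Hausdorff bound on the basepoint curvature sets. The only cosmetic difference is that the paper records $\dis(R)=d_X(x,x')$ rather than the inequality, but this changes nothing.
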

\begin{proof}

Recalling the definition of $d_{\mathcal{F}}$ and Theorem \ref{thm:tripods}, it is sufficient to show that there is a triplet $(Z,\pi_X,\pi_Y)$ of a set and surjective maps from it to $X$ and $Y$ respectively such that, 
$$\max_{\sigma\subset Z}|\Phi_X(x)(\pi_X(\sigma))-\Phi_X(x')(\pi_X(\sigma))|\leq L\cdot\max\{1,K\}\cdot d_X(x,x'). $$

Let $X\in \M$ and $x,x'\in X$. Let we see that $(X,\id_X,\id_X)$ is a triplet of a set and two surjections to the set $X$. Let $\sigma\subset X$. Then, 
\begin{eqnarray*}
	|\Psi_X(x)(\id_X(\sigma))- \Psi_X(x')(\id_X(\sigma)) |&=& |\Psi_X(x)(\sigma)- \Psi_X(x')(\sigma) |\\
	&=& |\nu_{n,l}(\cset(x,\sigma),\rho_X(x) )-\nu_{n,l}(\cset(x,\sigma),\rho_X(x') )|\\
    &\leq & L\cdot\max\{\dH(\cset(x,\sigma),\cset_n(x',\sigma)),\\
    & & \hspace{1cm}|| \rho_X(x)-\rho_X(x')||_\infty \}.
\end{eqnarray*}
First, if we choose the correspondence $R=\{(x,x)\}_{x\in X}\cup\{(x,x')\}$ then it follows from the definition that,
\begin{eqnarray*}
\dis(R)&=&\max_{(x_1,x_2),(x_1',x_2')\in R} |d_X(x_1,x_1')-d_X(x_2,x_2')|\\
	&=& d_X(x,x').
\end{eqnarray*}

Now, let $\alpha\in \cset_n(x,\sigma).$ There exists $x_1,...,x_n\in \sigma$ such that $\alpha= D^{(n+1)}_X(x,x_1,...,x_n)$. It follows that $\beta= D^{(n)}_X(x',x_1,...,x_n)$ is an element of $\cset_n(x',\sigma)$ and,

\begin{eqnarray*}
||\alpha-\beta||_\infty&=& \max_{1\leq i\leq n}|d_X(x,x_i)-d_X(x',x_i) |\\
&\leq & d_X(x,x').
\end{eqnarray*} 
Similarly, we can prove that if we choose $\beta\in \cset_n(x',\sigma)$, there is an element $\alpha\in \cset_n(x',\sigma)$ such that $||\alpha-\beta||_\infty\leq d_X(x,x')$. This implies that $\dH(\cset_n(x,\sigma),\cset_n(x,\sigma))\leq d_X(x,x').$ Then,
\begin{eqnarray*}
|\Psi_X(x)(\sigma)- \Psi_X(x')(\sigma)|&\leq & L\cdot\max\{ d_X(x,x'),K\cdot d_X(x,x')\}\\
&=& L\cdot\max\{1,K\}\cdot d_X(x,x'). 
\end{eqnarray*}
\end{proof}

\subsection{Eccentricity Filtration}

Given a compact metric space $(X,d_X)$, we recall the \emph{eccentricity function} (see \cite{dgh-props}) $\ecc_X:X\rightarrow \R_+$ to be $x\mapsto \max_{x'\in X}d_X(x,x').$

\begin{ex}[Eccentricity basepoint family]\label{eccfiltration}
A more interesting example of a basepoint filtration functor is $\Psi^\ecc$
such that for all $X\in \M$,
$$\Psi^\ecc_X = \{\psi_X^\ecc(x_0):\pow(X)\rightarrow \R\}_{x_0\in X},$$ 
where for $x_0\in X$ and $\sigma \subset X$, $$\psi_X^\ecc(x_0)(\sigma) := \max \left\{\diam( \iota_X(\sigma) ),\frac{1}{2}\left(\ecc_X(x_0)-\min_{x'\in \sigma}d_X(x_0,x')\right)\right\}.$$
 \end{ex}

\begin{lemma}
$\Psi^\ecc$ is well defined.
\end{lemma}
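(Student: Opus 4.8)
The goal is to show that the basepoint family $\Psi^\ecc$ is well defined, i.e. that for each $X\in\M$ and each $x_0\in X$, the map $\psi_X^\ecc(x_0):\pow(X)\to\R$ is a filtration, which amounts to checking the monotonicity condition: $\psi_X^\ecc(x_0)(\tau)\le\psi_X^\ecc(x_0)(\sigma)$ whenever $\tau\subset\sigma\subset X$. The plan is to fix $X\in\M$, $x_0\in X$, and $\tau\subset\sigma\subset X$, and then bound each of the two terms defining $\psi_X^\ecc(x_0)(\tau)$ by $\psi_X^\ecc(x_0)(\sigma)$, using that the maximum of two functions is monotone in each argument.

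First I would handle the diameter term: since $\tau\subset\sigma$, every pair of points of $\tau$ is a pair of points of $\sigma$, so $\diam(\iota_X(\tau))\le\diam(\iota_X(\sigma))\le\psi_X^\ecc(x_0)(\sigma)$. Next I would handle the eccentricity-correction term: since $\tau\subset\sigma$, the minimum $\min_{x'\in\tau}d_X(x_0,x')$ is taken over a smaller set, hence $\min_{x'\in\tau}d_X(x_0,x')\ge\min_{x'\in\sigma}d_X(x_0,x')$, so that
$$\tfrac12\Big(\ecc_X(x_0)-\min_{x'\in\tau}d_X(x_0,x')\Big)\le\tfrac12\Big(\ecc_X(x_0)-\min_{x'\in\sigma}d_X(x_0,x')\Big)\le\psi_X^\ecc(x_0)(\sigma).$$
Since $\psi_X^\ecc(x_0)(\tau)$ is the maximum of these two quantities and each is $\le\psi_X^\ecc(x_0)(\sigma)$, we conclude $\psi_X^\ecc(x_0)(\tau)\le\psi_X^\ecc(x_0)(\sigma)$. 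It also bears checking that $\psi_X^\ecc(x_0)$ is $\R_+$-valued: the diameter term is nonnegative, so the maximum is nonnegative, and hence $\psi_X^\ecc(x_0)$ indeed lands in $\R_+$ as required by the definition of a filtration.

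There is essentially no obstacle here — the only subtlety worth a sentence of care is the direction of the inequality when restricting the minimum to a subset (smaller index set means larger or equal minimum), which is exactly what makes the correction term monotone despite the minus sign. Everything else is the routine observation that diameters are monotone under inclusion and that pointwise maxima of monotone functions are monotone. So the proof is a short two-line verification, and I would present it as a brief displayed computation covering both terms followed by the one-line conclusion about the maximum.
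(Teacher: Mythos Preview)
Your proposal is correct and follows essentially the same argument as the paper: fix $\tau\subset\sigma$, observe $\diam(\iota_X(\tau))\le\diam(\iota_X(\sigma))$ and $\min_{x'\in\tau}d_X(x_0,x')\ge\min_{x'\in\sigma}d_X(x_0,x')$, then conclude monotonicity of the maximum. Your extra sentence verifying that the values lie in $\R_+$ is a small addition the paper takes for granted.
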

\begin{proof}
Let $X\in \M$ and $x_0\in X$. We need to then show that $\psi^{\ecc}_X(x_0)$ is a filtration on $X$. By definition, we already have that $\psi^\ecc_X(x_0)$ is a map from $\pow(X)\rightarrow \R_+$, so we only need to show that it satisfies the monotonicity condition. Let $\tau \subset \sigma \subset \pow(x)$. Then we have $\diam(\tau)\leq \diam(\sigma)$, and we have that $\min\limits_{x'\in\sigma}d_X(x_0,x') \leq \min\limits_{x'\in\tau}d_X(x_0,x')$ since $\tau\subset \sigma$. This implies $$\frac{1}{2}\left(\ecc_X(x_0)-\min\limits_{x'\in \sigma}d_X(x_0,x')\right)\geq \frac{1}{2}\left(\ecc_X(x_0)-\min\limits_{x'\in \tau}d_X(x_0,x')\right).$$ Putting these two together gives that $\psi^{\ecc}_X(x_0)(\tau)\leq \psi^{\ecc}_X(x_0)(\sigma)$, so the monotonicity condition holds, and $\psi^{\ecc}_X(x_0)$ is a filtration on $X$.
\end{proof}

We see that $\Psi^\ecc$ is a basepoint filtration functor. For this functor, $\ecc_X(x)$ is the point descriptor. Then $\forall A\in \cset_n(x_0,X)$, the adjusted valuation is given by 
$$\nu_{n+1,1}(A\times v ):= \max\left\{\max(a_{ij})_{2\leq i,j\leq n+1}, \frac{1}{2}\left(v-\min(a_{1j})_{2\leq j\leq n+1}\right)\right\}.$$

To prove stability of the eccentricity filtration, we just need to prove that the adjusted valuation and $\ecc$ are stable.
\begin{lemma}[\cite{dgh-props}]\label{lemma:easy}
Let $X,Y\in \mathcal{M}$ and let $R$ be any correspondence between $X$ and $Y$. Then,
\begin{enumerate}
\item $|\diam(X)-\diam(Y)|\leq \dis(R),$
\item For all $(x,y)\in R,$ $|\ecc_X(x)-\ecc_Y(y)|\leq \dis(R).$

\end{enumerate}
\end{lemma}

\section{Constructions of infinite families of stable valuations}
In this section we construct two different infinite families of stable valuations and  also provide some characterization results.

\subsection{$\max$-induced families valuations}

One general method through which valuations can be generated is to define a function $f:\R^{n\times n } \rightarrow \R$ and define the max-induced valuation to be,
$$\nu^f_n(A):= \max_{\alpha\in A} f(\alpha),\hspace{1cm} \forall \, A\in \pow(\R^{n\times n}). $$
One condition that is sufficient for this valuations to be stable is for $f$ to be a $L$-Lipschitz function. In this case, the valuation will be an $L-$stable valuation.

\begin{prop}
	Let $f:\R^{n\times n} \rightarrow \R$ be a $L-$Lipschitz function. Then the max-induced valuation $\nu^f_n$ is $L-$stable.
\end{prop}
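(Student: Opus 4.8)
The plan is to estimate $|\nu^f_n(A) - \nu^f_n(B)|$ directly using the definition of the max-induced valuation together with the Lipschitz hypothesis on $f$ and the definition of Hausdorff distance. First I would fix nonempty $A, B \subset \R^{n\times n}$, set $\delta := \dH^{(\R^{n\times n},\ell_\infty)}(A,B)$, and pick a point $\alpha^0 \in A$ that (approximately, or exactly, if the max is attained on a finite set — which it is, since we work with finite subsets) achieves $\nu^f_n(A) = f(\alpha^0)$. By the definition of the Hausdorff distance, there is a point $\beta^0 \in B$ with $\|\alpha^0 - \beta^0\|_\infty \le \delta$.

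Next, since $f$ is $L$-Lipschitz, $f(\alpha^0) \le f(\beta^0) + L\|\alpha^0 - \beta^0\|_\infty \le f(\beta^0) + L\delta \le \nu^f_n(B) + L\delta$, where the last inequality holds because $\beta^0 \in B$ so $f(\beta^0) \le \max_{\beta \in B} f(\beta) = \nu^f_n(B)$. This gives $\nu^f_n(A) - \nu^f_n(B) \le L\delta$. By symmetry (swapping the roles of $A$ and $B$), one obtains $\nu^f_n(B) - \nu^f_n(A) \le L\delta$ as well, hence $|\nu^f_n(A) - \nu^f_n(B)| \le L\delta = L \cdot \dH^{(\R^{n\times n},\ell_\infty)}(A,B)$, which is exactly the $L$-stability condition from the definition of $\mathfrak{V}_n^L$. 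One should also note in passing that $\nu^f_n$ is a genuine valuation, i.e.\ monotonic: if $B \subset A$ then $\max_{\beta \in B} f(\beta) \le \max_{\alpha \in A} f(\alpha)$, so $\nu^f_n \in \mathfrak{V}_n$.

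There is no real obstacle here; the argument mirrors the $2$-stability computation already carried out in the proof of Proposition~\ref{prop-ult}, where the relevant $f$ was $a \mapsto a_{13} - \max\{a_{12}, a_{23}\}$ (a $2$-Lipschitz function in $\ell_\infty$). The only point requiring mild care is the selection of the maximizing element $\alpha^0$: since the paper restricts attention to \emph{finite} subsets of $\R^{n\times n}$, the maximum is attained and no $\varepsilon$-argument is needed; if one wanted the statement for arbitrary bounded sets one would instead take $\alpha^0$ with $f(\alpha^0) \ge \nu^f_n(A) - \varepsilon$ and let $\varepsilon \to 0$ at the end. I would present the finite-set version to match the conventions fixed earlier in the paper.
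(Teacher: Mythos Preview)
Your argument is correct and matches the paper's proof essentially line for line: pick a maximizer $\alpha_0\in A$, use the Hausdorff distance to find $\beta_0\in B$ with $\|\alpha_0-\beta_0\|_\infty\le\delta$, apply the Lipschitz bound to get $\nu^f_n(A)-\nu^f_n(B)\le L\delta$, and conclude by symmetry. Your additional remarks on monotonicity and on the finite-set convention are accurate and consistent with the paper's setup, though the paper's own proof omits them.
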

\begin{proof}
	Let $A,B\in \pow(\R^{n \times n})$ and $\delta= \dH(A,B)$. Let $\alpha_0\in A$ such that $f(\alpha_0)= \nu^f_n(A)$. There exists $\beta_0 \in B$ such that $||\alpha_0-\beta_0||_\infty\leq \delta$. From the Lipschitz continuity,
    $$\nu^f_n(A)-\nu^f_n(B)\leq f(\alpha_0)- f(\beta_0)\leq L\,\delta. $$
	Analogously it follows that,
    $$|\nu^f_n(A)-\nu^f_n(B)|\leq L\cdot \delta. $$
\end{proof}

The Vietoris-Rips, Ultrametricity, and Hyperbolicity filtration functors all belong to this family, since they were induced respectively by the following functions: 
\begin{eqnarray*}
f^{\textrm{Rips}}(\alpha)= \alpha_{12}, &\hspace{1cm}& \forall \alpha \in \R^{2\times 2},\\
f^{\ult}(\beta)= \beta_{13}-\max\{\beta_{12},\beta_{23}\},&\hspace{1cm}& \forall \beta \in \R^{3\times 3},\\
f^{\hyp}(\gamma)= \gamma_{12}+\gamma_{34}-\max\{\gamma_{13}+\gamma_{24},\gamma_{14}+\gamma_{23}\},&\hspace{1cm}& \forall \gamma\in\R^{4\times 4}.
\end{eqnarray*}

\subsection{Other families of valuations}
We have a generating method of valuations through composition of a maximum and a Lipschitz continuous function with respect to both $\ell_\infty$ metrics. The Rips, ultrametricity, and hyperbolicity valuations follow this structure. This  the question: Is there any stable valuation that does not behave this way? In this section we further break down the Rips valuation to gain a method of constructing families of valuations different from this pattern.
\subsubsection{1-point valuation}
 
Using a slightly different procedure we can create a valuation whose corresponding filtration is equivalent up to some scalar to the Rips filtration. 

\begin{defn}
Let $\omega_{n,1}(A)$ be the \emph{1-point valuation} for $A\in \pow(\R^{n\times n})$ defined as,
$$\omega_{n,1}(A)= \inf\limits_{p\in \R^{n\times n}}\max\limits_{\alpha\in A}||\alpha-p||_{\infty}.$$
\end{defn}

When we analyze the time of arrival of simplexes through curvature sets and valuations, we are just evaluating the valuation in a particular family of sets of matrices. For any $X\in \M$, $\cset_n(X)$ has no matrix with negative entries. It is also true that for any curvature set, the zero matrix will be in the set, by taking any constant $n-$tuple.

\begin{defn}
We call a finite set of matrices $A\in \pow(\R^{n\times n})$ almost metric if $0\in A$ and for all $\alpha\in A$, all entries of $\alpha$ are nonnegative.
\end{defn}

We now prove that when restricted to almost metric sets, $\omega_{n,1}$ agrees with the $\max$-valuation induced by the function $\alpha\mapsto \frac{1}{2}||\alpha||_\infty$.

\begin{prop}
Let $A\in \pow(\R^{n\times n})$ an almost metric set. Then,
$$\omega_{n,1}(A)=\frac{1}{2}\max\limits_{\alpha\in A}||\alpha||_{\infty}.$$
\end{prop}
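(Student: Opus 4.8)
The plan is to prove the two inequalities $\omega_{n,1}(A)\le \tfrac12 M$ and $\omega_{n,1}(A)\ge \tfrac12 M$ separately, where $M:=\max_{\alpha\in A}\|\alpha\|_\infty$. The guiding observation is that an $\ell_\infty$-ball in $\R^{n\times n}$ is a product of intervals, so the Chebyshev-radius problem defining $\omega_{n,1}$ decouples across matrix entries; the almost-metric hypothesis ($0\in A$ together with nonnegativity of all entries) forces each entrywise range $\{\alpha_{ij}:\alpha\in A\}$ to be anchored at $0$, i.e. contained in $[0,m_{ij}]$ with $m_{ij}:=\max_{\alpha\in A}\alpha_{ij}$.

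For the upper bound, I would exhibit an explicit near-center chosen coordinatewise. Let $p^\ast\in\R^{n\times n}$ be the matrix with entries $p^\ast_{ij}=\tfrac12 m_{ij}$. Since every $\alpha\in A$ has nonnegative entries, $0\le \alpha_{ij}\le m_{ij}$, hence $|\alpha_{ij}-p^\ast_{ij}|\le \tfrac12 m_{ij}\le \tfrac12 M$ for all $i,j$; taking the maximum over entries gives $\|\alpha-p^\ast\|_\infty\le\tfrac12 M$ for every $\alpha\in A$, so $\omega_{n,1}(A)\le\tfrac12 M$.

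For the lower bound, I would use only the two matrices $0\in A$ (available by almost-metricity) and a maximizer $\alpha^\ast\in A$ with $\|\alpha^\ast\|_\infty=M$; by nonnegativity there is an entry $(i_0,j_0)$ with $\alpha^\ast_{i_0 j_0}=M$. For an arbitrary $p\in\R^{n\times n}$ we have $\|0-p\|_\infty\ge|p_{i_0 j_0}|$ and $\|\alpha^\ast-p\|_\infty\ge|M-p_{i_0 j_0}|$, so $\max_{\alpha\in A}\|\alpha-p\|_\infty\ge\tfrac12\big(|p_{i_0 j_0}|+|M-p_{i_0 j_0}|\big)\ge\tfrac12 M$ by the triangle inequality. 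Taking the infimum over $p$ yields $\omega_{n,1}(A)\ge\tfrac12 M$, and combining the two bounds proves the claim.

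There is no serious obstacle here. The only points that require care are that the center $p^\ast$ must be assembled entry by entry — a scalar multiple of a single extremal matrix $\alpha^\ast$ does not achieve radius $\tfrac12 M$ — and that the lower bound needs nothing beyond the pair $\{0,\alpha^\ast\}$; both facts are precisely what the almost-metric hypothesis buys us.
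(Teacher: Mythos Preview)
Your proof is correct and follows essentially the same two-inequality strategy as the paper: exhibit an explicit center for the upper bound, and use the triangle inequality on the pair $\{0,\alpha^\ast\}$ for the lower bound. One small remark: your closing comment that the center ``must be assembled entry by entry'' is stronger than necessary --- the paper simply takes $p$ to be the constant matrix with every entry equal to $\tfrac12 M$, and this already achieves radius $\tfrac12 M$ since $0\le\alpha_{ij}\le M$ for all $\alpha\in A$; your entrywise midpoint $p^\ast_{ij}=\tfrac12 m_{ij}$ works too, of course, but is more refined than the problem requires.
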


\begin{proof}
Let $\alpha_0 \in A$ such that, 
$$||\alpha_0||_{\infty}= \max_{\alpha\in A} ||\alpha||_{\infty}.$$
Let $\varepsilon=\frac{||\alpha_0||_{\infty}}{2}$ and $p_0\in \R^{n\times n}$ the matrix with all entries equal to $\varepsilon$.  Then, 
\begin{enumerate}
	\item It is clear that $||\alpha_0- p||_\infty=||0-p||_{\infty}=\varepsilon $.
    \item If $\alpha \in A$, since $ 0 \leq \alpha_{ij}\leq 2\varepsilon$ then,
    $$||\alpha-p||_\infty = \max_{i,j\in \{1,...,n\}} |\alpha_{ij}-\varepsilon|\leq \varepsilon. $$
\end{enumerate}
From this, it follows that $\omega_{n,1}(A)\leq \varepsilon.$ Lets prove that this is the minimum. Let $p\in \R^{n\times n}$ be any point. We know that,
$$||0-p||_\infty+ ||p-\alpha_0||\geq ||0-\alpha_0||_\infty= 2\varepsilon. $$
From this, it follows that, 
$$ \max\{||0-p||_\infty, ||p-\alpha_0||\}\geq \varepsilon.$$
And, so,
$$ \max_{\alpha\in A} ||\alpha-p||_{\infty}\geq \varepsilon.$$

\end{proof}

From this, it follows immediately that, although this valuation is not a max-induced defined valuation, it still generates the Vietoris-Rips filtration functor.

\begin{cor}
	If $X\in\M$ and $\sigma\subset X$, then,
    $$(\omega_{n,1}\circ\cset_n\circ\iota_X)(\sigma)= R_X(\sigma). $$
\end{cor}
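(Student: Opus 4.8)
The plan is to reduce the statement to the preceding proposition by checking that curvature sets of finite metric spaces are automatically almost metric. So fix $X\in\M$ and $\sigma\subset X$, and set $A:=\cset_n(\iota_X(\sigma))\in\pow(\R^{n\times n})$. First I would verify that $A$ is almost metric: applying $D^{(n)}_{\iota_X(\sigma)}$ to any constant tuple $(x,\dots,x)$ with $x\in\sigma$ yields the zero matrix, so $0\in A$; and every element of $A$ has the form $\big(d_X(x_i,x_j)\big)_{i,j=1}^n$ with $x_1,\dots,x_n\in\sigma$, hence all of its entries are nonnegative. Thus the previous proposition applies and gives
$$\omega_{n,1}(A)=\tfrac12\,\max_{\alpha\in A}\|\alpha\|_\infty.$$

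Next I would evaluate $\max_{\alpha\in A}\|\alpha\|_\infty$ directly. Since every entry of every $\alpha\in A$ equals $d_X(x_i,x_j)$ for some $x_i,x_j\in\sigma$, we get $\|\alpha\|_\infty\leq\diam(\iota_X(\sigma))$ for all $\alpha\in A$, giving one inequality. For the reverse inequality, pick a diametral pair $x,x'\in\sigma$ with $d_X(x,x')=\diam(\iota_X(\sigma))$ and form the tuple $(x,x',x,\dots,x)$; the matrix $D^{(n)}_{\iota_X(\sigma)}(x,x',x,\dots,x)\in A$ has $\ell_\infty$-norm equal to $\diam(\iota_X(\sigma))$ (this uses $n\geq 2$, which is the relevant range since the Vietoris–Rips functor is $2$-local). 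Hence $\max_{\alpha\in A}\|\alpha\|_\infty=\diam(\iota_X(\sigma))$, and combining with the displayed identity yields
$$(\omega_{n,1}\circ\cset_n\circ\iota_X)(\sigma)=\tfrac12\,\diam(\iota_X(\sigma)),$$
so that, up to the overall scalar $\tfrac12$ already flagged in the discussion preceding the definition of $\omega_{n,1}$ (equivalently, absorbing that constant into the normalization of the Rips valuation), one recovers $R_X(\sigma)$.

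I expect no real obstacle here: the whole content is carried by the previous proposition, and the two points requiring a little care are (i) confirming that $\cset_n(\iota_X(\sigma))$ is almost metric, which is immediate from the definition of $D^{(n)}_X$, and (ii) the bookkeeping with the multiplicative constant $\tfrac12$ relating $\omega_{n,1}$ to $\diam$. If one prefers to avoid the constant altogether, it suffices to record the chain $\omega_{n,1}(A)=\tfrac12\max_{\alpha\in A}\|\alpha\|_\infty=\tfrac12\diam(\iota_X(\sigma))$ and note that $\omega_{n,1}$ therefore generates the Vietoris–Rips filtration functor up to rescaling.
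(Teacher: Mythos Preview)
Your proof is correct and follows essentially the same approach as the paper: verify that $\cset_n(\iota_X(\sigma))$ is almost metric, invoke the preceding proposition to get $\omega_{n,1}(A)=\tfrac12\max_{\alpha\in A}\|\alpha\|_\infty$, and then identify this maximum with $\diam(\iota_X(\sigma))$. You are also right to flag the factor $\tfrac12$: the paper's own proof in fact concludes $\Phi^{\omega_{n,1}}_X(\sigma)=\tfrac12 R_X(\sigma)$, so the corollary as stated is missing that constant, and your discussion of the rescaling is the appropriate way to reconcile the statement with the computation.
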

\begin{proof}
Let $X\in \M$ and $\sigma \subset X$. We notice that the zero matrix is in $\cset_n(\iota_X(\sigma))$, and all matrix entries of elements of $\cset_n(\iota_X(\sigma))$ are nonnegative. From the fact that all elements of $\cset_n(\iota_X(\sigma))$ have nonnegative entries bounded by $\diam(\iota_X(\sigma))$,
$$ \max_{\alpha\in \cset_n(\iota_X(\sigma))} ||\alpha||_\infty= \diam(\iota_X(\sigma)).$$
It follows that,
$$\Phi^{\omega_{n,1}}_X(\sigma)= \omega_{n,1}(\cset_n(\iota_X(\sigma)))=\frac{1}{2} \diam(\iota_X(\sigma))= \frac{1}{2}R_X(\sigma).$$
\end{proof}

\subsubsection{$k$-point valuations}
We know explore an extention of the filtration functor $\omega_{n,1}$. To generalize this, instead of infimizing on $p\in\R^{n\times n}$, we infimize over $\{p_1,p_2,\dots,p_k\}\subset\R^{n\times n}$.

\begin{defn} 
Let the \emph{$k$-point $n$-valuation} for $A\in \pow(\R^{n\times n})$ be 

 $$\omega_{n,k}(A) := \inf_{\{p_1,...,p_k\}\subset\R^{n\times n} }\max_{\alpha\in A} \min_{1\leq i\leq k}\|\alpha-p_i\|.$$
\end{defn}

It is worth stressing that $\{p_1,...,p_k\}$ is a way of denoting a set generated by choosing $k$ points of the space, without constraining them to be different.

The defined $\omega_{n,k}$ measure how well a set $A\in \pow(\R^{n\times n})$ can be approximated by a set of size $k\in \N$.

We can prove that $\omega_{n,k}$ is a well defined valuation that enjoys 1-stability.

\prop{For each $n,k\in \N$, $\omega_{n,k}:\pow(\R^{n\times n})\rightarrow \R$ is a 1-stable valuation.}
\begin{proof}
	First, let us prove that it is a valuation: let $A\subset B$ be finite subsets of $\R^{n\times n}$ and let $\{p_1,...,p_k\}\subset \R^{n\times n}$. It is clear, since $A\subset B$, that,
  $$ \max_{\alpha \in A} \min_{1\leq i\leq k} ||\alpha -p_i||_{\infty}\leq \max_{\beta \in B} \min_{1\leq i\leq k} ||\beta -p_i||_{\infty}. $$
 Then, infimizing over all $\{p_1,...,p_k\}\subset \R^{n\times n}$,
  $$ \omega_{n,k}(A)= \inf_{\{p_1,...,p_k\} \subset \R^{n \times n } }\max_{\alpha \in A} \min_{1\leq i\leq k} ||\alpha -p_i||_{\infty}\leq \inf_{\{p_1,...,p_k\} \subset \R^{n \times n } }\max_{\beta \in B} \min_{1\leq i\leq k} ||\beta -p_i||_{\infty}= \omega_{n,k}(B). $$
  Then, indeed $\omega_{n,k}$ is monotonic. 
  Now, we prove that $\omega_{n,k}$ is 1-stable: let $A,B\subset \R^{n\times n}$ finite subsets, $\delta= \dH(A,B)$ and $\varepsilon>0$. Now, let $\{p^A_1,...,p^A_k\}\subset \R^{n\times n}$ a set of points such that 
  $$\omega_{n,k}(A)\leq\max_{\alpha \in A}\min_{1\leq i\leq k}||\alpha-p^A_i||_\infty < \omega_{n,k}(A)+\varepsilon.$$ 
  We see that,
\begin{eqnarray*}
	\omega_{n,k}(B)-\omega_{n,k}(A)&\leq & \max_{\beta \in B}\min_{1\leq i\leq k} ||\beta-p^A_i||_\infty -\omega_{n,k}(A)\\
    &\leq & \max_{\alpha \in A}\max_{\beta\in B}\left(||\beta-\alpha||_\infty +||\alpha-p^A_i||\right)-\omega_{n,k}(A)\\
    &< & \delta +\omega_{n,k}(A)+\varepsilon -\omega_{n,k}(A)\\
    &= & \delta + \epsilon.
\end{eqnarray*}
Since this is true for all $\varepsilon>0$,
$$ \omega_{n,k}(B)-\omega_{n,k}(A)\leq \delta.$$
Similarly it follows that,
$$ \omega_{n,k}(A)-\omega_{n,k}(B)\leq \delta,$$
and we conclude that,
$$|\omega_{n,k}(A)-\omega_{n,k}(B)|\leq \dH(A,B). $$

\end{proof}

At first sight, this family of valuations is not generated by the max-induced method of $\S 6.1$. We now prove that this valuation cannot be realized as a max-induced valuation.

\begin{thm} Let $k\geq 2$ and $n\in \N$. There is no function $f_k:\R^{n\times n}\rightarrow \R$ such that,
$$\omega_{n,k}(A)= \max_{\alpha\in A} f_k(A), \hspace{1cm} \forall \, A\in \pow(\R^{n\times n}).$$
\end{thm}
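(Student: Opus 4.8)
The plan is to argue by contradiction: suppose such an $f_k$ exists, so that $\omega_{n,k}(A) = \max_{\alpha \in A} f_k(\alpha)$ for all finite $A \subset \R^{n\times n}$. The first observation I would make is that for a singleton $A = \{\alpha\}$, we have $\omega_{n,k}(\{\alpha\}) = 0$ (take $p_1 = \alpha$), so $f_k(\alpha) = 0$ for \emph{every} $\alpha \in \R^{n\times n}$. But then $\max_{\alpha \in A} f_k(\alpha) = 0$ for every finite $A$, forcing $\omega_{n,k} \equiv 0$. So it suffices to exhibit a single finite set $A \subset \R^{n\times n}$ with $\omega_{n,k}(A) > 0$, i.e. a set of more than $k$ matrices that cannot be covered by $k$ balls of radius zero — equivalently, a set $A$ with $|A| > k$ in which no two matrices coincide; then no choice of $k$ points $\{p_1,\dots,p_k\}$ can have every $\alpha \in A$ equal to some $p_i$, so $\max_{\alpha\in A}\min_i \|\alpha - p_i\|_\infty > 0$, and taking the infimum over the (finite-dimensional, hence one can argue via compactness that the inf is attained, or simply observe the inf of a collection of positive numbers bounded below by a positive constant) still yields a positive number.

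The one point that needs a little care is showing the infimum defining $\omega_{n,k}(A)$ is actually positive, not merely that each configuration gives a positive value. Here I would note that we may restrict the infimum to configurations $\{p_1,\dots,p_k\}$ lying in a fixed compact set — say the closed $\ell_\infty$-ball of radius $\diam_\infty(A) + \max_{\alpha \in A}\|\alpha\|_\infty$ around the origin — since any $p_i$ farther than that from all of $A$ can be discarded or replaced without increasing the objective. The function $(p_1,\dots,p_k) \mapsto \max_{\alpha \in A}\min_i \|\alpha - p_i\|_\infty$ is continuous on this compact set, hence attains its minimum at some configuration $\{p_1^\ast,\dots,p_k^\ast\}$. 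Since $|A| = k+1 > k$, the pigeonhole principle gives two distinct elements $\alpha, \alpha' \in A$ with, say, $\min_i\|\alpha - p_i^\ast\|_\infty$ and the objective at this optimum at least $\tfrac12\|\alpha - \alpha'\|_\infty > 0$ (if some $p_i^\ast$ is the nearest point to both, then the larger of the two distances is at least half their separation). Hence $\omega_{n,k}(A) > 0$, contradicting $\omega_{n,k} \equiv 0$.

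Concretely, for the explicit set I would take $A = \{0, \lambda_1 E, \lambda_2 E, \dots, \lambda_k E\} \subset \R^{n\times n}$ where $E$ is a fixed nonzero matrix (e.g. the matrix with a single $1$ entry, or the all-ones matrix) and $0 < \lambda_1 < \lambda_2 < \cdots < \lambda_k$ are distinct reals; this is $k+1$ distinct collinear points, and one can even compute $\omega_{n,k}(A)$ exactly, but all that is needed is $|A| = k+1$ and positivity. The main (minor) obstacle is simply the compactness/attainment argument for the infimum; everything else is immediate from the singleton computation. Note that the hypothesis $k \geq 2$ is not really needed for this contradiction — the argument works for $k \geq 1$ — so I would either remark on this or simply prove the stated case.
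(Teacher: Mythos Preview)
Your proof is correct and follows essentially the same strategy as the paper: derive $f_k\equiv 0$ from singletons, conclude $\omega_{n,k}\equiv 0$, and then contradict this by exhibiting a set $A$ with $|A|>k$ and $\omega_{n,k}(A)>0$. You are in fact more careful than the paper, which simply asserts the existence of a minimizing configuration $\{p_1^A,\dots,p_k^A\}$ without the compactness/attainment justification you supply; your observation that the hypothesis $k\geq 2$ is unnecessary (for the statement over all of $\pow(\R^{n\times n})$) is also correct.
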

\begin{proof}
We will prove this by contradiction. Let $k\geq 2$ and $f_k:\R^{n\times n}\rightarrow \R$ be a function such that $\omega_{n,k}(A)= \max_{\alpha\in A} f_k(A)$. 
	For all $A\subset \R^{n\times n}$ such that $|A|\leq k$, $\omega_{n,k}(A)= 0$. This can be seen from the fact that, if $A=\{\alpha_1,...,\alpha_\ell\}$, with $1 \geq \ell\leq k$, for $p_i=\alpha_i$ for $1\leq i\leq \ell$ and $p_i= \alpha_\ell$ for $\ell< i\leq k$,
    $$ 0\leq \omega_{n,k}(A)\leq \max_{\alpha_j\in A}\min_{1\leq i\leq k}||\alpha_j-p_i||_\infty= 0. $$
    It follows that,
\begin{eqnarray*}
    \omega_{n,k}(A)&=&\max_{\alpha \in A}f_k(\alpha)\\
    &= & \max_{\alpha\in A}\omega_{n,k}(\{\alpha\})=0.
\end{eqnarray*}
Let $A\in \pow(\R^{n\times n})$ such that $|A|>k$. Then, 
$$ 0=\omega_{n,k}(A)= \inf_{\{p_1,...,p_k\}}\max_{\alpha\in A}\min_{1\leq i\leq k}||\alpha-p_i||_\infty.$$
Let $\{p^A_1,...,p^A_k\}$ the minimizing set of $ \omega_{n,k}(A)$. Then, for all $\alpha \in A$ there is $i\in \{1,...,k\}$ such that $||\alpha-p^A_i||_\infty=0$. From this, $A\subset \{p^A_1,...,p^A_k\}$. This yields a contradiction, since $|\{p^A_1,...,p^A_k\}|\leq k$. We conclude that there is no such $f_k$.

\end{proof}

\rem{For all $n\in\N$ and $\ell < k$, $\omega_{n,k}$ and $\omega_{n,\ell}$ are different functions and $\omega_{n,k}\leq \omega_{n,\ell}$. 
\begin{proof}
	First, let us prove that $\omega_{n,k}\leq \omega_{n,\ell}$. For this, let $A\in\pow(\R^{n \times n})$. We observe that, for each set $P_\ell= \{p_1,...,p_{\ell}\}\subset \{p_1,...,p_k\}\R^{n\times n}$,
\begin{eqnarray*}
	\max_{\alpha\in A}\min_{1\leq i\leq k}||\alpha-p_i||_\infty &\leq &\max_{\alpha\in A}\min_{1\leq i\leq \ell}||\alpha-p_i||_\infty\\   
\end{eqnarray*}
It follows that, 
\begin{eqnarray*}
\omega_{n,k}(A) &=& \inf_{\{p_1,...,p_k\}\subset \R^{n\times n}}\max_{\alpha\in A}\min_{1\leq i\leq k}||\alpha-p_i||_\infty \\
 &\leq & \max_{\alpha\in A}\min_{1\leq i\leq \ell}||\alpha-p_i||_\infty\\ 
 &= & \omega_{n,\ell}(A).
\end{eqnarray*}
 Now, let us prove that if $|A|\leq k$, $\omega_{n,k}(A)=0$ and if $|A|>k$, $\omega_{n,k}(A)$: let $A=\{\alpha_1,...,\alpha_j\}\subset \R^{n\times n}$ where all $\alpha_1,...,\alpha_j$ are different and $j\leq k$. Then, by choosing $p_i=\alpha_i$ for $1\leq i \leq j$ and $p_i= \alpha_j$ for $j< i \leq k$, it follows that,
 $$\max_{\alpha\in A} \min_{1\leq i\leq k}||\alpha-p_i||_{\infty}\leq \max_{\alpha\in A }\min_{\alpha'\in A}||\alpha-\alpha'||_\infty= 0. $$
 Since $\omega_{k,n}\geq 0$, it follows that $\omega_{n,k}(A)= 0$. To prove that $\omega_{n,k}(A)>0$ if $|A|>k$ we will proceed by contradiction. Let $A\in \pow(\R^{n\times n})$ such that $|A|>k$ and $\omega_{n,k}(A)= 0$. Let $\varepsilon= \frac{1}{2} \max_{\alpha,\alpha'\in A} ||\alpha -\alpha'||_\infty$. Let $\{p_1,...,p_k\}\subset \R^{n\times n}$ such that,
 $$ \max_{\alpha\in A}\min_{1\leq i\leq k}||\alpha-p_i||_\infty< \varepsilon.$$
 Then, for all $\alpha\in A$ there is an index $1\leq i\leq k$ such that $||\alpha-p_i||_\infty<\epsilon$. Now, notice that, if $||\alpha-p_i||_\infty < \epsilon$, then, for all $\alpha'\in A-\{\alpha\}$,
 $$ ||\alpha'-p_i||_\infty \geq ||\alpha-\alpha'||_\infty-||\alpha-p_i||_\infty > \varepsilon.$$
From this, two different points $\alpha,\alpha'\in A$ cannot share the same point $p_i$ at distance less than $\varepsilon$. Then, for each $\alpha\in A$ we can assign a different index $1\leq i\leq k$. But this gives an injective map from a set $A$ to $\{1,...,k\}$. This holds a contradiction since $|A|>k$.
To conclude, let $A\in \pow(\R^{n\times n})$ be a set such that $|A|=k$ Then,
$$ \omega_{n,k}(A)= 0 < \omega_{n,\ell}(A).$$
Then, $\omega_{n,k}\neq \omega_{n,\ell}.$
 \end{proof}
}

\subsection{Functoriality and characterization results}\label{sub:functcharact}

From Definition \ref{def:filtfunct} we have seen that local filtration functors are not necessarily functorial on the category of 1-Lipschitz maps. This is related to the fact that we are only imposing monotonicity and stability to our valuations. The former assures our constructions are well defined and the latter the stability of the diagrams. But both are not taking into account relation between sets that are not proximity or containment of the sets. We still have to find conditions on valuations that relate them with the functorial category of their induced filtration functor.

For this, we define a partial order on $\R^{n\times n}$ as follows: for all $\alpha,\beta\in \R^{n\times n}$, we say that $\alpha\leq \beta $ if for all $1\leq i,j\leq n$, $\alpha_{ij}\leq \beta_{ij}$. Furthermore, We will say that $\alpha< \beta $ if $\alpha\leq \beta$ and for some $1\leq i,j\leq n$, $\alpha_{ij}<\beta_{ij}$. 

Now we can also define a partial order on $\pow(\R^{n\times n})$. Given $A,B\in\pow(\R^{n\times n})$, we say that $A\leq B$ if for all $\alpha\in A$ there exists $\beta \in B$ such that $\alpha\leq \beta$. We say that $A< B$ if $A\leq B$ and for some $\alpha\in A$, there is $\beta \in B$ such that $\alpha< \beta$.

With this in mind, we can define what it is for a valuation to be increasing.

\begin{defn}
	Let $\nu_n : \pow( \R^{n \times n})\rightarrow \R $ be a valuation. We say that it is \emph{increasing} if for all $A,B\in \pow(\R^{n\times n})$ such that $A\leq B$, $\nu_n(A)\leq \nu_n(B).$
\end{defn}

This condition is independent of the monotonicity condition. It is relating the valuation on sets not through containment but from features of the matrix entrances of each set.

Let $X,Y\in \M$ be spaces such that we can define a 1-Lipschitz, surjective map $f:X\rightarrow Y$. In principle, there is no relation between $\cset_n(X)$ and $\cset_n(Y)$ in terms of containment, since the geometry of $Y$ and $X$ could be quite different. What we can assure is that for all $x_1,...,x_n\in X$,
$$ D^{(n)}_Y(f(x_1),...,f(x_n))\leq D^{(n)}_X(x_1,...,x_n).$$

Since $f$ is surjective, this implies $\cset_n(Y)\leq \cset_n(X)$. From this observation, the following holds:

\begin{prop}
	Let $\nu_n$ be an increasing $n-$valuation. Then, $\Phi^{\nu_n}$ is a functorial filtration over the 1-Lipschitz category. 
\end{prop}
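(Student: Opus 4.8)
The plan is to unwind the definition of functoriality on $\M^{\text{Lip}}$ and reduce everything to the order-theoretic observation made immediately before the statement. Fix $X,Y\in\M$, a $1$-Lipschitz map $h\colon X\to Y$, and a non-empty $\sigma\subseteq X$; the goal is to show $\Phi^{\nu_n}_X(\sigma)\geq\Phi^{\nu_n}_Y(h(\sigma))$. By definition of the filtration functor induced by $\nu_n$, this is precisely the inequality $\nu_n\big(\cset_n(\iota_X(\sigma))\big)\geq\nu_n\big(\cset_n(\iota_Y(h(\sigma)))\big)$, so it suffices to compare the two curvature sets in the partial order on $\pow(\R^{n\times n})$ and then apply the increasing hypothesis.

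First I would restrict $h$ to $\sigma$, obtaining a $1$-Lipschitz map $h|_\sigma\colon\iota_X(\sigma)\to\iota_Y(h(\sigma))$ which is surjective onto its codomain by construction (and $h(\sigma)$ is non-empty since $\sigma$ is). Then I would apply the entrywise inequality recorded just above the statement, now to $h|_\sigma$: given any $y_1,\dots,y_n\in h(\sigma)$, choose preimages $x_1,\dots,x_n\in\sigma$ with $h(x_i)=y_i$; since $h$ is $1$-Lipschitz, $D_Y^{(n)}(y_1,\dots,y_n)\leq D_X^{(n)}(x_1,\dots,x_n)$ entrywise in $\R^{n\times n}$. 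Letting the tuples range over all of $h(\sigma)^n$ and $\sigma^n$, this shows that every matrix of $\cset_n(\iota_Y(h(\sigma)))$ lies below some matrix of $\cset_n(\iota_X(\sigma))$, i.e. $\cset_n(\iota_Y(h(\sigma)))\leq\cset_n(\iota_X(\sigma))$ in $\pow(\R^{n\times n})$.

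Finally, since $\nu_n$ is increasing, applying it to this order relation yields $\nu_n(\cset_n(\iota_Y(h(\sigma))))\leq\nu_n(\cset_n(\iota_X(\sigma)))$, which is exactly $\Phi^{\nu_n}_Y(h(\sigma))\leq\Phi^{\nu_n}_X(\sigma)$. As $X,Y,h,\sigma$ were arbitrary, $\Phi^{\nu_n}$ is functorial over $\M^{\text{Lip}}$.

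There is no substantial obstacle here: the argument is essentially bookkeeping. The only points deserving a word of care are that one must pass to the restriction $h|_\sigma$ (so that the curvature sets compared are those of the \emph{subspaces} $\iota_X(\sigma)$ and $\iota_Y(h(\sigma))$, living as finite subsets of $\R^{n\times n}$ where the partial order on $\pow(\R^{n\times n})$ is defined), and that surjectivity of $h|_\sigma$ onto $h(\sigma)$ is what licenses lifting each tuple in the target to a tuple in the source — exactly as in the displayed observation preceding the proposition.
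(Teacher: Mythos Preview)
Your proof is correct and follows essentially the same approach as the paper: restrict the $1$-Lipschitz map to $\sigma$, use surjectivity onto $h(\sigma)$ to lift tuples and obtain $\cset_n(\iota_Y(h(\sigma)))\leq\cset_n(\iota_X(\sigma))$, then apply the increasing hypothesis on $\nu_n$. The paper's version merely cites the displayed observation preceding the proposition rather than spelling out the preimage argument, but the substance is identical (and your write-up avoids a minor sign typo present in the paper's final displayed inequality).
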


\begin{proof}
	Let $X,Y\in \M$ such that there is a 1-Lipschitz map $f$ from $X$ to $Y$. Let $\sigma \subset X$. This induces a surjective 1-Lipschitz map $f: \sigma \rightarrow f(\sigma)$ considering $\sigma$ and $f(\sigma)$ with their induced subspace metric. Then, $\cset_n(\iota_Y(f(\sigma)))\leq \cset_n(\iota_X(\sigma))$, and,
    $$\Phi^{\nu_n}_X(\sigma)= \nu_n(\cset_n(\iota_X(\sigma)))\leq  \nu_n(\cset_n(\iota_Y(f(\sigma))))= \Phi^{\nu_n}_Y(f(\sigma)).$$
\end{proof}

We already proved that if $f:\R^{n\times n}\rightarrow \R$ is L-Lipschitz, then $\nu^f$ is a L-stable valuation. We can also prove that there is a simple condition that we can impose to f for it to generate an increasing valuation. This condition is for the function to be \emph{increasing}:
$$\forall \alpha,\beta \in \R^{n\times n} \text{ such that } \alpha\leq \beta ,\text{  } f(\alpha)\leq f(\beta).$$

\begin{prop}
	Let $f:\R^{n\times n }\rightarrow \R$ be an increasing function. Then $\nu^f$ is an increasing valuation.
\end{prop}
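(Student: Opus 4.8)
The plan is to unwind both definitions and chase an element that realizes the maximum. Recall that $\nu^f_n(A)=\max_{\alpha\in A}f(\alpha)$, and that $A\leq B$ in $\pow(\R^{n\times n})$ means precisely that every $\alpha\in A$ is dominated entrywise by some $\beta\in B$. Since $A$ is by convention a \emph{finite} subset of $\R^{n\times n}$, the maximum defining $\nu^f_n(A)$ is attained.

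First I would fix $A,B\in\pow(\R^{n\times n})$ with $A\leq B$ and pick $\alpha_0\in A$ with $f(\alpha_0)=\nu^f_n(A)$. Using $A\leq B$, choose $\beta_0\in B$ with $\alpha_0\leq \beta_0$. Then I would invoke the hypothesis that $f$ is increasing to get $f(\alpha_0)\leq f(\beta_0)$, and finally observe $f(\beta_0)\leq \max_{\beta\in B}f(\beta)=\nu^f_n(B)$. Stringing these together gives $\nu^f_n(A)=f(\alpha_0)\leq f(\beta_0)\leq \nu^f_n(B)$, which is exactly the increasing condition for the valuation.

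There is no real obstacle here: the statement is an immediate consequence of the definitions, and the only thing to be careful about is making sure we pick the dominating matrix $\beta_0$ for the specific $\alpha_0$ that realizes $\nu^f_n(A)$ (rather than for an arbitrary element of $A$), and that finiteness guarantees the maximum is a genuine maximum and not merely a supremum. If one wanted, the same argument works verbatim with suprema in place of maxima, so finiteness is not essential, but it keeps the exposition clean.
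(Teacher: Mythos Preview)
Your proof is correct and follows essentially the same approach as the paper's: both pick $A\leq B$, use the domination relation to bound each $f(\alpha)$ by some $f(\beta)$ with $\beta\in B$, and conclude $\max_{\alpha\in A}f(\alpha)\leq\max_{\beta\in B}f(\beta)$. Your version is slightly more explicit in singling out the maximizer $\alpha_0$, but the argument is the same.
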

\begin{proof}
	Let $A,B\in \pow(\R^{n\times n})$ such that $A\leq B$. Then, for all $\alpha\in A$ there exists a $B\beta\in B$ such that $\alpha\leq \beta$. From this 
    $$\nu^f(A)=\max_{\alpha\in A} f(\alpha) \leq \max_{\beta\in B}f(\beta)= \nu^f(B).$$
\end{proof}

 We can prove that the Vietoris-Rips filtration functor is, in some sense, the only increasing 2-local filtration functor. We do not even have to impose stability for this to be true.
\begin{prop}
	Let $\nu_2:\pow(\R^{2\times 2})\rightarrow \R$ be an increasing valuation. Then, there is an increasing function $f:\R\rightarrow \R$ such that $\Phi^{\nu_2}= f\circ R$.
\end{prop}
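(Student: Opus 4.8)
The plan is to extract $f$ directly from $\nu_2$ and then check the identity $\Phi^{\nu_2}_X(\sigma)=f(R_X(\sigma))$ one simplex at a time, playing the valuation axiom (inclusion-monotonicity) against the increasing hypothesis.

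First I would record the shape of $2$-curvature sets. Writing $E_t\in\R^{2\times 2}$ for the symmetric matrix with zero diagonal and both off-diagonal entries equal to $t$, we have $D_Z^{(2)}(z_1,z_2)=E_{d_Z(z_1,z_2)}$ for any finite metric space $Z$ and any $z_1,z_2\in Z$; hence $\cset_2(Z)=\{E_t : t\text{ is a distance realized in }Z\}$. In particular $E_{\diam(Z)}\in\cset_2(Z)$, and every element of $\cset_2(Z)$ is of the form $E_t$ with $0\le t\le\diam(Z)$, so each such $E_t$ satisfies $E_t\le E_{\diam(Z)}$ in the entrywise partial order on $\R^{2\times 2}$.

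Next I would define $f:\R\to\R$ by $f(r):=\nu_2(\{E_r\})$. If $r\le s$ then $E_r\le E_s$ entrywise, hence $\{E_r\}\le\{E_s\}$ in the induced order on $\pow(\R^{2\times 2})$, and the increasing hypothesis gives $f(r)\le f(s)$; thus $f$ is increasing. Now fix $X\in\M$ and $\sigma\subset X$ and set $r:=R_X(\sigma)=\diam(\iota_X(\sigma))$. By the previous paragraph $\cset_2(\iota_X(\sigma))\le\{E_r\}$, so since $\nu_2$ is increasing, $\Phi^{\nu_2}_X(\sigma)=\nu_2(\cset_2(\iota_X(\sigma)))\le\nu_2(\{E_r\})=f(r)$. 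Conversely $E_r\in\cset_2(\iota_X(\sigma))$, i.e. $\{E_r\}\subseteq\cset_2(\iota_X(\sigma))$, so the monotonicity of the valuation under inclusion gives $f(r)=\nu_2(\{E_r\})\le\nu_2(\cset_2(\iota_X(\sigma)))=\Phi^{\nu_2}_X(\sigma)$. Combining, $\Phi^{\nu_2}_X(\sigma)=f(R_X(\sigma))$ for every $X$ and $\sigma$, i.e. $\Phi^{\nu_2}=f\circ R$.

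This argument involves no hard estimate; the only thing to keep straight is the distinction between the two relations on $\pow(\R^{2\times 2})$ — set inclusion, which powers the lower bound via the valuation axiom, and the entrywise order $\le$, which powers the upper bound via the increasing hypothesis — together with the observation that the singleton $\{E_r\}$ is simultaneously a subset of, and an $\le$-upper bound for, $\cset_2(\iota_X(\sigma))$, which is exactly what pinches $\Phi^{\nu_2}_X(\sigma)$ to the single value $f(r)$. One may also remark that $f$ is only ever evaluated at diameters, so its behavior on $\R_{<0}$ is immaterial.
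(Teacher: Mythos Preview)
Your proof is correct and follows essentially the same route as the paper: define $f(r)=\nu_2(\{E_r\})$ (the paper writes $M(r)$ for your $E_r$) and sandwich $\nu_2(\cset_2(\iota_X(\sigma)))$ between $\nu_2(\{E_r\})$ on both sides using that $\{E_r\}$ is simultaneously contained in and an entrywise upper bound for the curvature set. You are in fact more careful than the paper on two points: you explicitly verify that $f$ is increasing (the paper omits this), and you separate out which relation on $\pow(\R^{2\times 2})$ drives each inequality, whereas the paper compresses the sandwich into a single line.
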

\begin{proof}
First, we define the matrix $M(r)\in \R^{2\times 2}$ to be,
    $$M(r)= \begin{pmatrix}0 & r \\ r & 0 \end{pmatrix},$$
    for all $r\in \R$.
Let us notice first that given $X\in\M$ and $\sigma\subset X$,
$$ \cset_2(\iota_X(\sigma))=\left\{ M(d_X(x,x')) : x,x' \in \sigma \right\}$$
    Let $r_0= \diam(\iota_X(\sigma))$ and $x,x'\in X$ such that $d_X(x,x')=r_0$. Then, we observe that,
\begin{eqnarray*}
	M(r_0)&\in & \cset_2(\iota_X(\sigma)),\\
    \cset_2(\iota_X(\sigma))&\leq & \left\{M(r) \right\}.
\end{eqnarray*}
Since $\nu_2$ is increasing, $\nu_n(\cset_2(\iota_X(\sigma)))= \nu_n(\{M(r_0)\})$. Then, if we call $f= \nu_n\circ M$, then $\Phi^{\nu_n}_X(\sigma)= f(R_X(\sigma))$.
\end{proof}

We can strengthen our notion of increasing valuation on general $n-$valuations. This strong condition is enough for us to prove several results that connect stability, functoriality and the Vietoris-Rips filtration functor:

\begin{defn}
	We say a $n-$valuation is \emph{strongly increasing} if for all $A,B \in \pow(\R^{n\times n})$, such that for all $\alpha\in A$ and $1\leq i,j\leq n$ there is a matrix $\beta \in B$ such that $\alpha_{ij}< \beta_{ij}$, then $\nu_n(A)\leq \nu_n(B)$.
\end{defn}

Now, given $A\in \pow(\R^{n\times n})$, we define $\max A\in \R^{n\times n}$ to be given by:
$$(\max A)_{ij}= \max_{\alpha\in A}\alpha_{ij}, \hspace{1cm} \forall \, 1\leq i,j\leq n.$$

\begin{thm}
	Let $\nu_n$ be an increasing $n-$valuation. Then the following holds: 
    \begin{enumerate}
    \item For all sets $A\in \pow(\R^{n\times n})$,
    $$ \max_{\alpha\in A} \nu_n(\{\alpha\})\leq \nu_n(A)\leq \nu_n(\{\max A\}).$$ 
    \item If $\nu_n$ is strictly increasing and $1$-stable then,
    $$\max_{\alpha\in A} f_{\nu_n}(\alpha)=\nu_n(\{\max A \}).$$
    \item Furthermore, given $\nu_n$ strictly increasing and $1-stable$, there exists a 1-Lipschitz function $f_{\nu_n}:\R\rightarrow \R$ such that $\Phi^{\nu_n}= f\circ R$.
    \end{enumerate}
\end{thm}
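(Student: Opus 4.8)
The plan is to establish the three items in order, with item (2) carrying the technical weight.

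\medskip
\noindent\textbf{Item (1).} The lower bound is pure monotonicity: since $\{\alpha\}\subseteq A$ for each $\alpha\in A$ we have $\nu_n(\{\alpha\})\le\nu_n(A)$, and we take the maximum over $\alpha\in A$. The upper bound is the increasing hypothesis: every $\alpha\in A$ satisfies $\alpha\le\max A$ entrywise, so $A\le\{\max A\}$ in the partial order on $\pow(\R^{n\times n})$, whence $\nu_n(A)\le\nu_n(\{\max A\})$.

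\medskip
\noindent\textbf{Item (2).} The point is to upgrade the right-hand inequality of (1) to an equality $\nu_n(A)=\nu_n(\{\max A\})$, which (writing $f_{\nu_n}(\alpha):=\nu_n(\{\alpha\})$, and noting that $\alpha\mapsto\nu_n(\{\alpha\})$ is increasing as a consequence of the increasing hypothesis applied to singletons) yields the asserted identity. The increasing property alone cannot do this, because $A\le\{\max A\}$ is attained with equality in each coordinate, so the strongly-increasing hypothesis, which demands \emph{strict} entrywise domination, does not apply directly. I would bridge this with a small perturbation. Fix $\varepsilon>0$, let $J_n$ be the all-ones $n\times n$ matrix, and set $A_\varepsilon:=\{\alpha+\varepsilon J_n:\alpha\in A\}$. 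For each coordinate $(i,j)$ pick $\alpha^{(ij)}\in A$ realizing $(\max A)_{ij}$; then $\alpha^{(ij)}+\varepsilon J_n\in A_\varepsilon$ has $(i,j)$-entry $(\max A)_{ij}+\varepsilon>(\max A)_{ij}$, so the pair $\{\max A\},A_\varepsilon$ satisfies the hypothesis of the strongly-increasing condition, giving $\nu_n(\{\max A\})\le\nu_n(A_\varepsilon)$. Since $\dH(A,A_\varepsilon)\le\varepsilon$, $1$-stability gives $\nu_n(A_\varepsilon)\le\nu_n(A)+\varepsilon$; combining and letting $\varepsilon\to0$ gives $\nu_n(\{\max A\})\le\nu_n(A)$, which together with (1) produces the equality.

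\medskip
\noindent\textbf{Item (3).} Apply (2) to $A=\cset_n(\iota_X(\sigma))$ and compute the entrywise maximum of a full curvature set explicitly. Every matrix in a curvature set is a distance matrix, so its diagonal vanishes; and for $i\ne j$ one realizes $(\max A)_{ij}=\max_{x_i,x_j\in\sigma}d_X(x_i,x_j)=\diam(\iota_X(\sigma))$ by taking $x_i,x_j$ to be a diametral pair of $\sigma$ and the remaining points arbitrary. Hence $\max\cset_n(\iota_X(\sigma))=r\,(J_n-I_n)$ with $r=\diam(\iota_X(\sigma))=R_X(\sigma)$, where $I_n$ is the identity (I take $n\ge2$; for $n=1$ every curvature set is $\{0\}$ and the claim is trivial). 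Defining $f_{\nu_n}(r):=\nu_n(\{r(J_n-I_n)\})$ for $r\ge0$, item (2) gives
$$\Phi^{\nu_n}_X(\sigma)=\nu_n(\cset_n(\iota_X(\sigma)))=\nu_n(\{r(J_n-I_n)\})=f_{\nu_n}(R_X(\sigma)),$$
i.e. $\Phi^{\nu_n}=f_{\nu_n}\circ R$. Finally $f_{\nu_n}$ is $1$-Lipschitz: for $r,s\ge0$ one has $\dH(\{r(J_n-I_n)\},\{s(J_n-I_n)\})=\|(r-s)(J_n-I_n)\|_\infty=|r-s|$ (using $n\ge2$), so $1$-stability of $\nu_n$ forces $|f_{\nu_n}(r)-f_{\nu_n}(s)|\le|r-s|$.

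\medskip
\noindent\textbf{Main obstacle.} The delicate step is item (2): reconciling the strict inequalities built into the strongly-increasing hypothesis with the non-strict comparison $A\le\{\max A\}$. The resolution is the $\varepsilon$-inflation above, and it works precisely because the strongly-increasing condition permits the dominating matrix to be chosen coordinate by coordinate, so one common inflation $\varepsilon J_n$ suffices to make all the required strict inequalities hold simultaneously; the subsequent appeal to $1$-stability and the limit $\varepsilon\to0$ are then routine. Items (1) and (3) are, by comparison, straightforward once (2) is in hand.
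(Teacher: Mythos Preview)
Your proof is correct and follows the same overall architecture as the paper: item (1) by monotonicity plus the increasing hypothesis, item (2) by a perturbation argument combining the strongly-increasing condition with $1$-stability and a limit, and item (3) by computing $\max\cset_n(\iota_X(\sigma))$ explicitly as the off-diagonal matrix $r(J_n-I_n)$ with $r=\diam(\iota_X(\sigma))$ and invoking (2).

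The only real difference is the direction of the perturbation in item (2). The paper approximates $\max A$ from strictly below by a sequence $\alpha^{(m)}\nearrow\max A$, adjoins $\alpha^{(m)}$ to $A$, and argues via $\nu_n(A\cup\{\alpha^{(m)}\})\le\nu_n(A)$ together with $\nu_n(A\cup\{\alpha^{(m)}\})\to\nu_n(A\cup\{\max A\})=\nu_n(\{\max A\})$. You instead inflate $A$ upward to $A_\varepsilon=A+\varepsilon J_n$, apply the strongly-increasing condition to the pair $(\{\max A\},A_\varepsilon)$, and then use $1$-stability to pass back to $A$. Your route is slightly more direct: it avoids the auxiliary identity $\nu_n(\{\max A\})=\nu_n(A\cup\{\max A\})$ and sidesteps the need to verify the strongly-increasing hypothesis for the elements of $A$ themselves inside $A\cup\{\alpha^{(m)}\}$. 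Conceptually, though, both arguments are the same idea---use a strict perturbation to trigger the strongly-increasing hypothesis, then remove the perturbation via stability.
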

\begin{proof}
(1) Let $A\in \pow(\R^{n\times n})$. Since $\{\alpha\}\subset A\leq \{\max A\}$, it follows that:
$$\max_{\alpha\in A} \nu_n(\{\alpha\})\leq \nu_n(A) \leq \nu_n(\{\max A\}). $$

(2) Let $(\alpha^{(n)})_{n\in \N}$ be a sequence of matrices in $\R^{n\times n}$ such that for all $1\leq i,j\leq n$, $(\alpha^{(n)}_{ij})_n$ is a strictly increasing sequence that converges to $(\max A)_{ij}$. For all $n\in \N$, each entry of the matrix $\alpha^{(n)}$ is dominated by some entrance in $A$ since $\alpha< \max A$. From this, it follows that,
$$\nu_n(A\cup\{\alpha^{(n)}\})\leq \nu_n(A). $$
Now, we know that $\nu_n$ is stable, from which it follows that,
$$ |\nu_n(A)-\nu_n(B)|\leq \dH(A,B).$$
Since $\alpha^{(n)}\rightarrow \max A$ as $n\rightarrow \infty$, 
$$\dH(A\cup\{\alpha^{(n)}\}, A\cup\{\max A \} )\rightarrow 0.$$
Since $A\leq \{\max A\}$, then,
$$\nu_n(\{\max A\}) = \nu_n(A\cup \{\max A\})= \lim_{n\rightarrow \infty} \nu_n(A\cup \{\alpha^{(n)}\})\leq \nu_n(A).$$
Then, $\nu_n(A)= \nu_n(\{\max A\}).$

(3) Let $\nu_n$ be stricly increasing and 1-stable. Let $M(r)= ((1-\delta_{ij})r)_{i,j=1}^n$. Let $f_{\nu_n}:\R\rightarrow \R$ be a function given by:
$$f_{\nu_n}(r)= \nu_{n}(\{M(r)\}), \hspace{1cm} \forall r\in \R.$$
Let $X\in \M$ and $\sigma \subset X$. Let $x,x'\in \sigma$ such that $d_X(x,x')=\diam(\iota_X(\sigma))$. Let $r_0:= \diam(\iota_X(\sigma))$. We observe that, for any $1\leq i,j\leq n$, if $(x^*_1,...,x^*_n) \in X^n$ is such that $x^*_k=x$ if $k\neq j$ and $x^*_j= x'$ then, 
$$D^{(n)}_X(x^*_1,...,x^*_n)_{ij}= d_X(x^*_i,x^*_j)= d_X(x,x')= r_0. $$

Also, we know that for all $\alpha \in \cset_n(\iota_X(\sigma))$ and $1\leq i,j\leq n$, $\alpha_{ij}\leq r_0$ and $\alpha_{ii}=0$. It follows that $\max \cset_n(\iota_X(\sigma))= M(\diam(\iota_X(\sigma))).$ From this, since $\nu_n$ is strictly increasing, 
\begin{eqnarray*}
	\Phi^{\nu_n}_X(\sigma)&=&\nu_n(\cset_n(\iota_X(\sigma)))\\
    &=& \nu_n(M(r_0))\\
    &= &f_{\nu_n}(R_X(\sigma)).
\end{eqnarray*}
Now, we can observe that $r\mapsto M(r)$ is an isometric embedding of $\R$ into $\pow(\R^{n\times n})$. Since $\nu_n$ is 1-stable, which means it is 1-Lipschitz, it follows that $f_{\nu_n}$ is 1-Lipschitz.

$$ \nu_n(\cset_n(\iota_X(\sigma)))= \nu_n(M(\diam(\iota_X(\sigma))))= f_{\nu_n}(R_X(\sigma)).$$

\end{proof}

\section{Computational Examples}\label{sec:exp}

Throughout this section, we write FPS to denote the \emph{farthest point sampling} procedure.\footnote{This sometimes referred to as \emph{sequential max-min sampling} as well.}

\subsection{The $\Phi^{\ult}$ filtration}

First, it is possible for us to point out some remarks:
\begin{itemize}
	\item At time zero, the filtration adds all isosceles triangles with the two longest sides equal. In general, if a subset of a metric space is ultrametric, the whole simplex generated by this subset is added at time zero.
    \item A triangle will be added early in the filtration if it is almost ultrametric (independent of its size) or if it is small enough to have a small ultrametricity.
    \item Isosceles triangles with two shortest sides equal are not ultrametric, so they are added at positive time.
\end{itemize}

To have intuition about how the $\Phi^{\ult}$ filtration functor behaves we programmed the filtration using the javaplex persistent homology package \cite{javaplex}.

To start, we studied the $\Phi^{\ult}$ diagram of $S^1$ with geodesic distance. We computed on an equidistributed sample of 50 points on the unit circle (Figure \ref{fig:ults1}). 

\begin{figure}
	\includegraphics[scale=0.5]{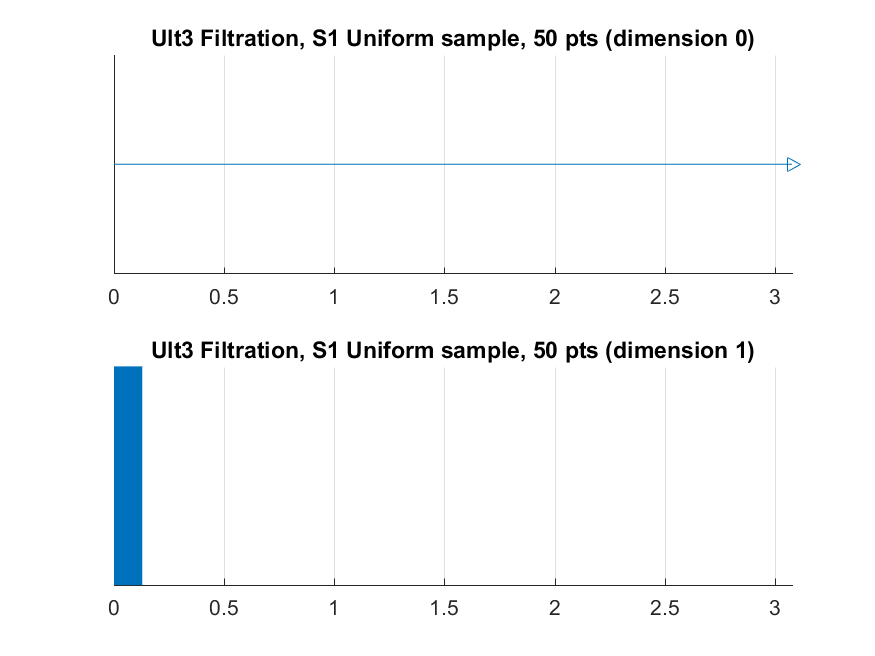}
    \caption{The persistence diagram of a equidistributed 50 point sample of $S^1$. }
    \label{fig:ults1}
    \includegraphics[scale=0.5]{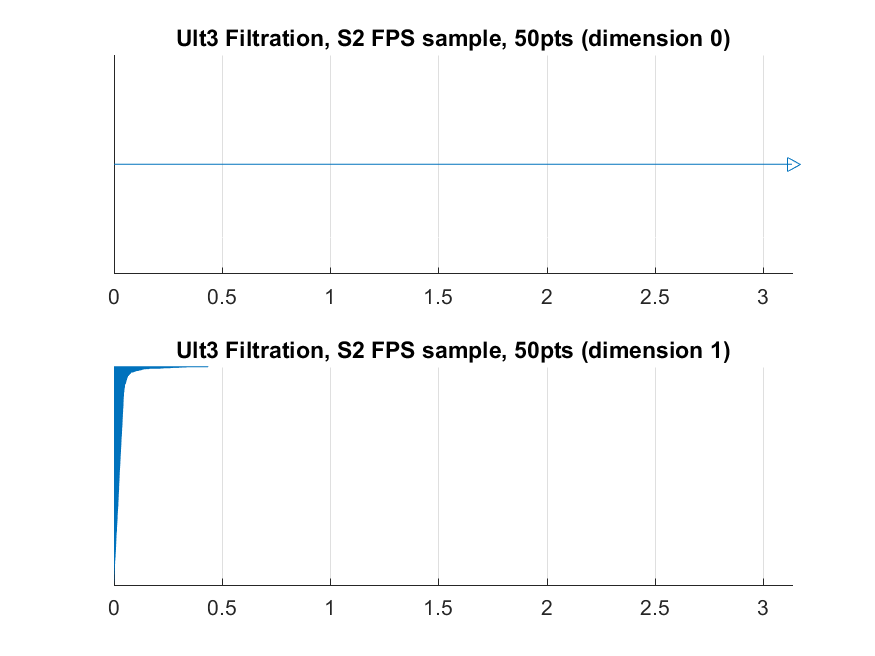}
    \caption{The persistence diagram of a 50 point sample of $S^2$ chosen via farthest point sampling. }
    \label{fig:ults2}
 \end{figure}

Since the 1-skeleton of the space is added at time zero, the 0-diagram consist of just one infinite bar. In the case of dimension 1, all bars die at time $\frac{\pi}{50}.$ To understand this, let us consider the following. After zero, the first radius in which 2-simplexes are added is $\frac{\pi}{50}$. Since we are considering geodesic distance, at this time, all triangles that have an edge of length $\frac{\pi}{50}$ are added. For any cycle, these triangles create a cap that kills the homology generated by it. 

We also computed $\dgm^{\ult}_1(S^2)$ through a FPS 50 point sample of $S^2$ endowed with the geodesic distance (Figure \ref{fig:ults2}). There is a small amount of long bars persisting in this case, that differs from the circle case. To understand this, we decided to compute the persistence diagram of a simpler model of the sphere: a cube (Figure \ref{fig:ultcube}). We considered the set,

$$ C= \left\{ \left((-1)^i\frac{1}{\sqrt{2}},(-1)^j\frac{1}{\sqrt{2}}\right) \bigg| i,j=0,1 \right\}\subset S^2 . $$

We endow $C$ with the induced subspace metric $d_C= d_{\mathbb{S}^2}|_{c\times C}$, where $d_{\mathbb{S}^2}$ is the geodesic distance on $S^2$. 

\begin{figure}
	\includegraphics[scale=0.4]{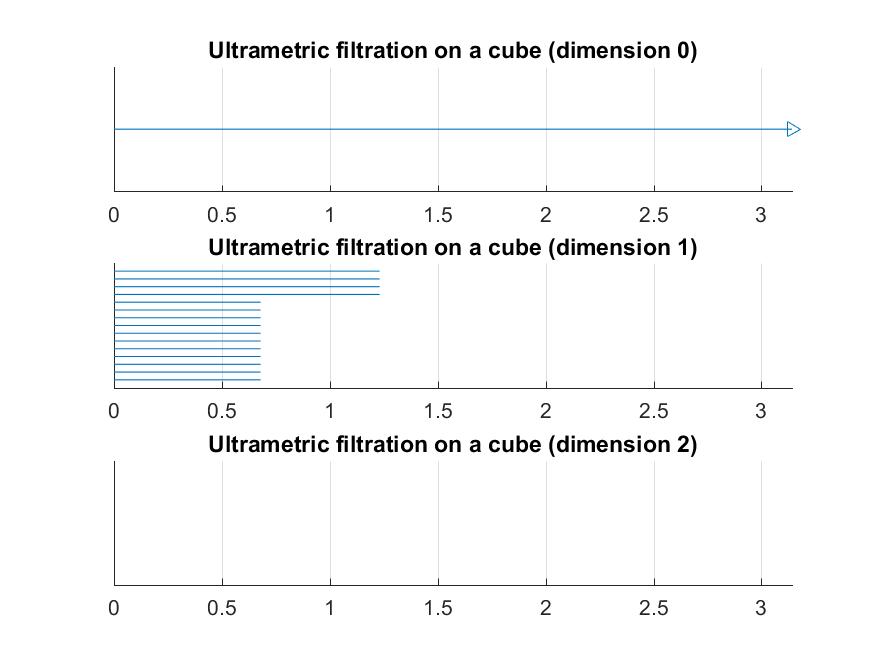}
    \caption{The persistence barcode of a geodesic cube.}
    \label{fig:ultcube}
\end{figure}

\begin{figure}
	\includegraphics[scale=0.25]{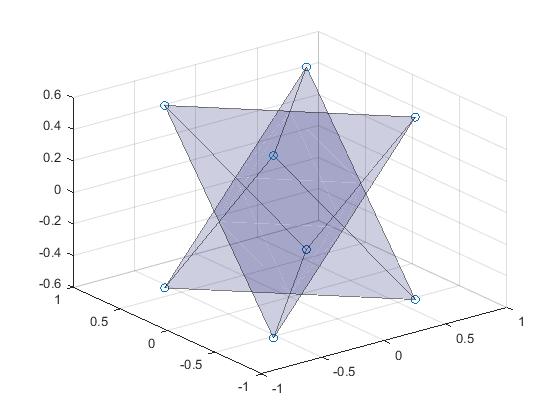}
	\includegraphics[scale=0.25]{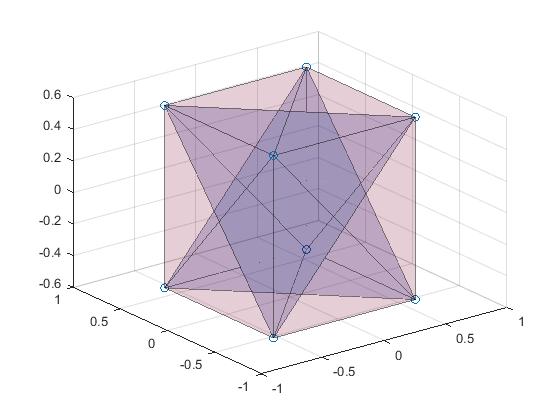}
	\includegraphics[scale=0.25]{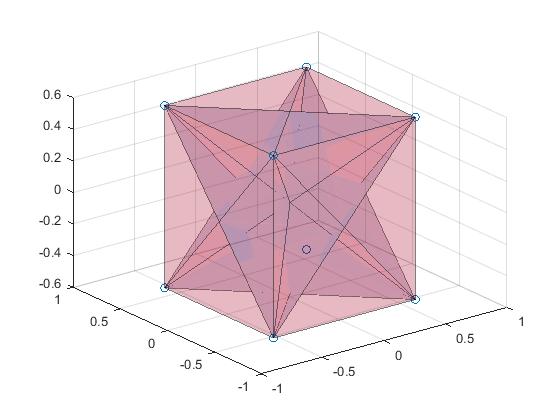}
    \caption{The process of arrival of the 2-dimensional simplexes of the cube on the three relevant times. The 1-dimensional skeleton is not plotted, but it is part of the filtration at all time. } 
    \label{fig:filtcube}
 \end{figure}

 In this discretization we can find a similar pattern, with four bars persisting more than a set of 16 bars in total. We visualize how simplexes are added at each time (Figure \ref{fig:filtcube}). Let us call $\gamma$ the angle that is formed by the center of a regular tetrahedron with respect to two vertices of it (approximately 107 degrees if measured in the 360 scale). We call $\beta$ the angle formed by the center of the cube with respect to two adjacent vertices of it (approximately 70.52 degrees). To be precise, if 
 
 $$O=(0,0,0), \hspace{0.5cm} A_1= \frac{1}{\sqrt(2)}(1,1,1),\hspace{0.5cm}  A_2= \frac{1}{\sqrt(2)}(1,1,-1),\hspace{0.5cm}  A_3= \frac{1}{\sqrt(2)}(1,-1,1),$$ 
 
 then,
 
 $$\gamma= \angle A_2 O A_3, \hspace{1cm} \beta= \angle A_1 O A_2.  $$
 
 The set of all possible distances between points in the cube with the geodesic distance is $\{0,\pi, \gamma, \beta \}$, so ultrametricity of triangles can be written in term of these quantities. It follows the following appearance of 2-simplexes:
 
\begin{itemize}
\item Time 0: two tetrahedron appear, since all faces are equilateral and the ultrametricity of them is zero. 
\item Time $\gamma-\beta$ ($\sim 0.6796$): we get all triangles that form half faces of the cube. 
\item Time $2\pi- \gamma$ ($\sim 1.2301$): we get all missing triangles.
\end{itemize}

By understanding the filtration on a cube, we can explain the 1-dimensional diagram of the sphere in an intuitive way. The 2-simplexes that are added at an early stage are of two types. First, small triangles that are generated by points that are close to each other. Those triplets form the surface of the sphere. The second type are 2-simplexes close to be equilateral. These generate tetrahedrons that do not create 1-dimensional homology. The 2-simplexes that take longer to be added have two diametrically opposite points and one close to be a midpoint between them, since those are the triangles with higher ultrametricity on the sphere. These cycles take longer to be filled.


\subsection{The $\Psi^{\ecc}$ basepoint functor}

If one considers the $\Psi^{\ecc}$ basepoint functor, we see that individual filtrations induced by this functor build up the complex starting at points in the space furthest away from the selected basepoint $x_0$, and working in towards this basepoint. To give more concrete intuition, we developed a program and tested some example finite metric spaces. 

\subsubsection{Implementation details} We used both JavaPlex \cite{javaplex} and Ripser \cite{ripser} for the persistent homology calculations. It is quick to see that all higher dimensional simplices in an eccentricity filtration are determined by the 1-skeleton of that filtration space. Thus, we were able to use Ripser for computing persistent homology in dimensions 1 and higher by putting the distance matrix which Ripser takes in as the matrix of filtration values for all pairs of points in the space. This is useful as Ripser is much faster computationally than JavaPlex, so we can work with larger datasets. Since Ripser adds in all vertices as time 0, and this is not how the eccentricity filtrations work, JavaPlex is used to compute 0-dimensional persistent homology. This is still reasonable as JavaPlex can work quickly for 0-dimensional persistent homology. Also, we make use of the tool in JavaPlex to return the representative of a persistent homology class, which gives useful information, something we will note in the examples later. 
To run the code, there must be an input finite metric space and/or distance matrix. Then, the program will plot the dataset in 3-D. The user must then click on a point within the space. Upon doing so, this point will be selected as the basepoint for the corresponding basepoint eccentricity filtration; the persistent homology of the space using this filtration will be computed, and the persistence barcodes will be plotted to the screen. For visualization help, the plot of the metric space never disappears, and after a basepoint is selected, it is highlighted and all the other points are colored by how close they are to the basepoint; something which helps when considering the eccentricity term in the filtration. After observing the persistence barcodes, the user may then click new points on the original plot to select a new basepoint, and the process will be repeated until the original plot of the metric space is closed. The code is divided up in a way to make it easy to change the filtration while maintaining other functionalities. If one wanted to computationally test a different basepoint filtration functor, to do so they would only have to alter the portion of the code where the eccentricity functor is currently defined.

\subsubsection{A figure 8}
A simple space which gives a good basis of understanding for the eccentricity basepoint filtration is a figure 8. The finite metric space used for this code is a discrete figure 8 with 400 points and using geodesic distance. In the figures below, on the left we see the plot of the metric space, with the red point in the center indicating that this is the selected basepoint. The other points are then colored, with the darker blue points close to the basepoint, and the lighter yellow points being further away. On the right, we see the one-dimensional persistent barcodes of this space. It is not possible to tell, but there are two 1-dimensional persistent intervals; both corresponding to one of the circles.

\begin{figure}[h]
   \includegraphics[width=0.4\textwidth]{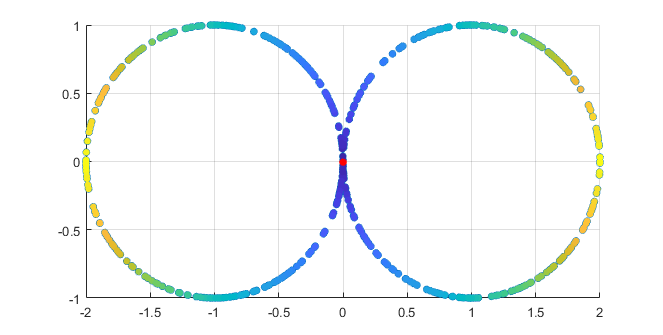}
   \includegraphics[width=0.4\textwidth]{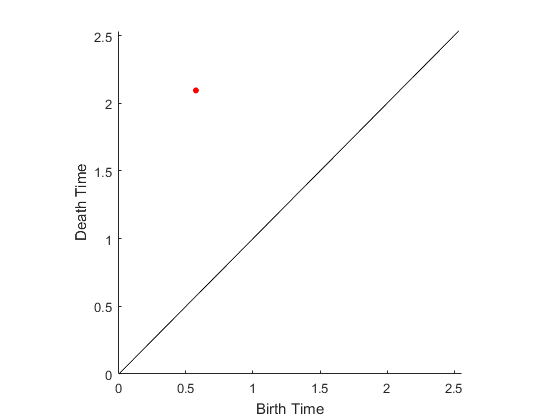}
\end{figure}

This is what we would expect for the 1-dimensional persistent homology of a figure 8 through ``standard'' filtration methods such as Rips, since there are clearly two loops, and of equal size. However, as we move the basepoint around one of the circles, the persistent homology of the new filtration changes. In the next pair of figures, we see what happens when the selected basepoint (highlighted in red) is moved up along one of the circles:

\begin{figure}[h]
   \includegraphics[width=0.4\textwidth]{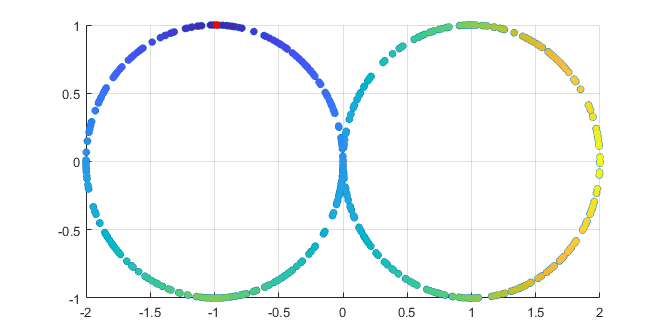}
   \includegraphics[width=0.4\textwidth]{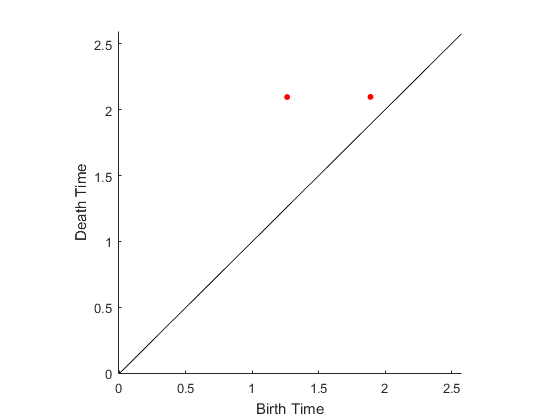}
\end{figure}

We see in this image that one of the 1-dimensional persistence intervals is shrinking, while the other maintains the same length. The eccentricity filtration treats simplices far away from the basepoint like the rips filtration, whereas simplices close to the basepoint are dominated by the eccentricity term. Thus, the shrinking persistence interval corresponds to the loop "closer" to the basepoint; the loop on which the basepoint rests. The unchanging persistence interval then corresponds to the loop separate from the basepoint. As the basepoint is shifted around one of the circles continually further from the center, the 1-dimensional persistence interval corresponding to the loop where the basepoint is completely disappears, leaving only one 1-dimensional persistent interval left. This can be seen in the next pair, where the basepoint is on the outer edge of the figure 8:

\begin{figure}[h]
  \includegraphics[width=0.4\textwidth]{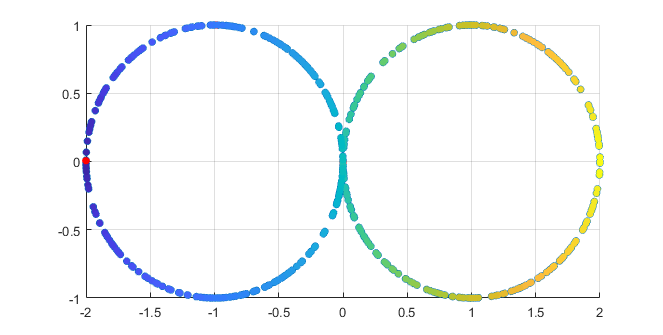}
  \includegraphics[width=0.4\textwidth]{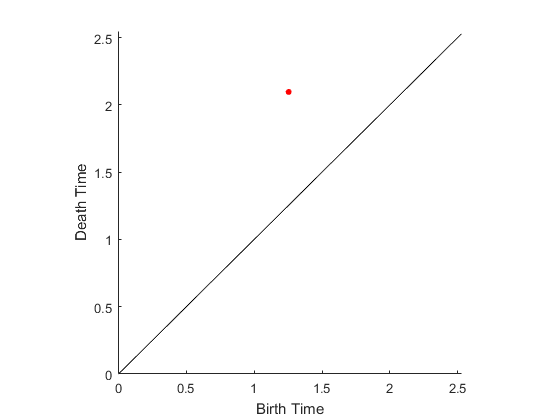}
\end{figure}

\subsubsection{A cat}
Now that a basis of understanding is developed, we proceed with a more complex and real-world example. The space we are dealing with in this example is a high resolution scan of the surface of a cat which is modeled as a finite metric space with geodesic distances. The original scan contained over 27000 points in 3-D, so in order to make it computationally practical we used the built in fps sampling from JavaPlex to select 500 points which filled out the space as well as possible. In the next figure is the original image of the scan, as well as a plot of our 500 selected points.

\begin{figure}[h]
	\includegraphics[width=0.3\textwidth]{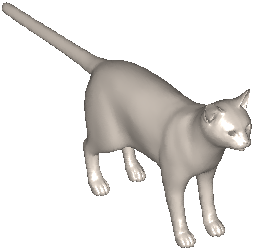}
    \includegraphics[width=0.4\textwidth]{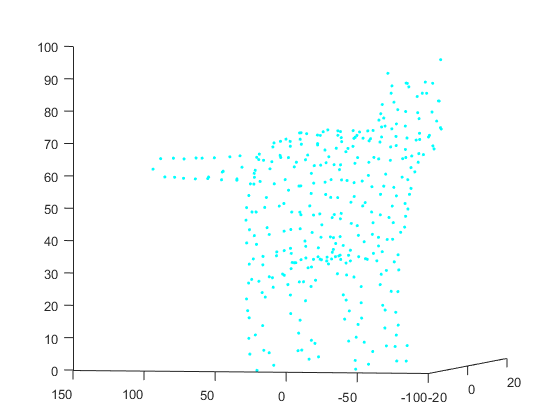}
\end{figure}

The eccentricity filtration can give nice information about the ``protrusions'' of a space, via the 0-dimensional persistence intervals. The figures below demonstrate this.

\begin{figure}[h]
   \includegraphics[width=0.4\textwidth]{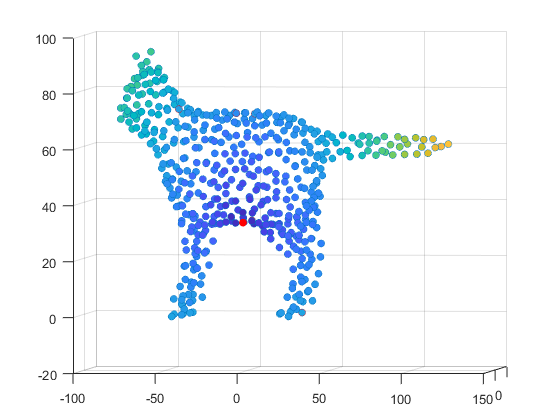}
   \includegraphics[width=0.4\textwidth]{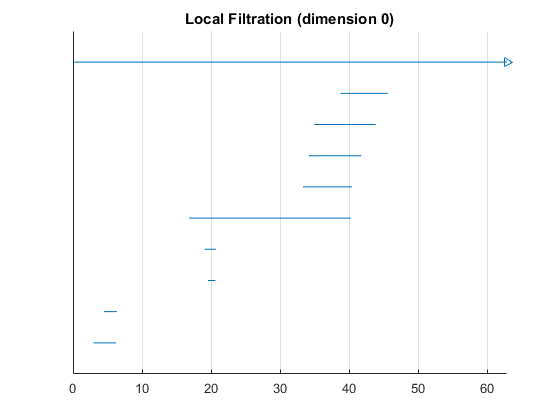}
   \caption{Selected basepoint central}
\end{figure}

A central point on the figure occurs on the belly of the cat. When selecting this (or any other relatively central) point as the basepoint for the eccentricity filtration, one can see all of the "protrusions" of the space as 0-dimensional persistence intervals. The infinitely persisting class always starts at a point which realizes the eccentricity of the basepoint; in this case the tip of the tail. The two short intervals in the bottom left also represents classes from the tail, since at the edge of the tail the diameter term in the filtration is dominant, and thus the filtration behaves similarly to Rips in this region. Then the next two very short intervals correspond to classes starting at the tips of the ears, until they merge with the larger class right above them, which corresponds to a class which originates from a point on the face of the cat. Lastly, the four remaining intervals of similar persistence correspond to the four classes starting at the end of each leg of the cat.

Next, we provide two plots of 1-dimensional persistence barcodes resulting from different selections of basepoint.

\begin{figure}[h]
   \includegraphics[width=0.3\textwidth]{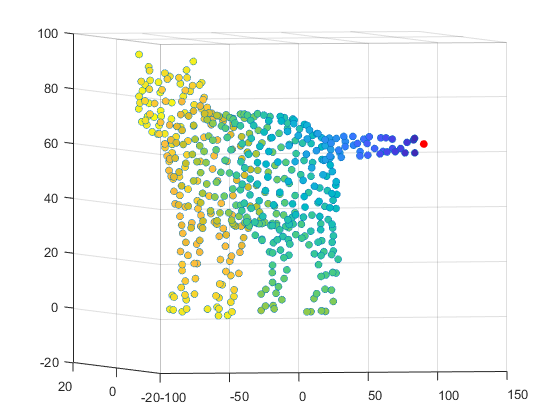}
   \includegraphics[width=0.3\textwidth]{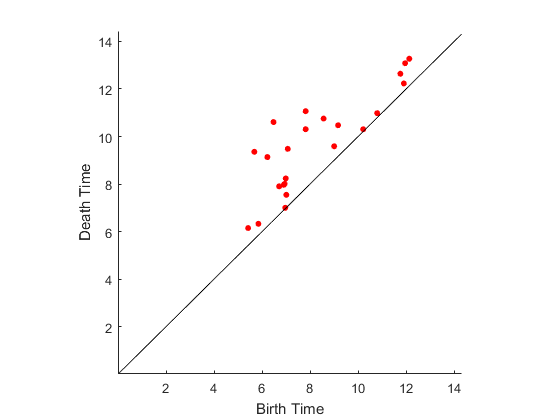}
   \caption{Selected basepoint at tip of tail}
\end{figure}

\begin{figure}[h]
 \includegraphics[width=0.3\textwidth]{bellyclickcat.png}
   \includegraphics[width=0.3\textwidth]{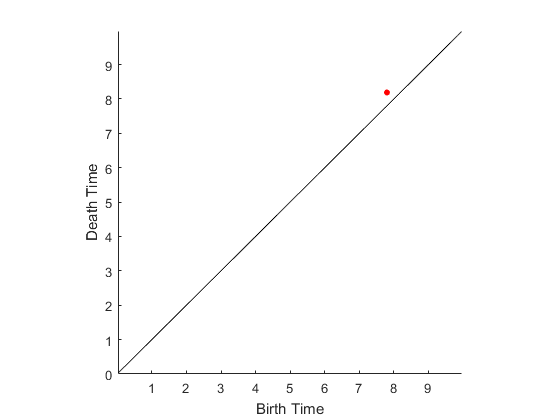}
   \caption{Selected basepoint central}
\end{figure}

In the first, we see the tip of the tail is selected as a basepoint. Since this is at one edge of the space, a large portion of the space opposite this basepoint will behave similarly to rips. The longest persistent interval corresponds to the loop going around the main body of the cat. There are also lots of incredibly short persistent intervals, which mostly correspond to noise as 500 points cannot completely represent a space originating from 27000 points. In the next image, we see the 1-dimensional barcodes when a central point is chosen as the basepoint. As noted, the largest natural loop from this space is the one going around the main body. However, the basepoint lies on this loop, so it won't be realized as a loop in the eccentricity filtration. While choosing a central basepoint is useful for considering 0-dimensional persistence, such a choice of basepoint will more often than not generate significantly fewer 1-dimensional persistent intervals than with a choice of basepoint on the exterior of the space. \\
Next, we want to note that the eccentricity filtration can be adjusted by changing the constant of $\frac{1}{2}$ in front of the eccentricity term. Below, we provide the 1-dimensional barcode with the same central basepoint, but a constant of $\frac{1}{4}$ on the eccentricity term instead. 

\begin{figure}[h]
 \includegraphics[width=0.3\textwidth]{bellyclickcat.png}
   \includegraphics[width=0.3\textwidth]{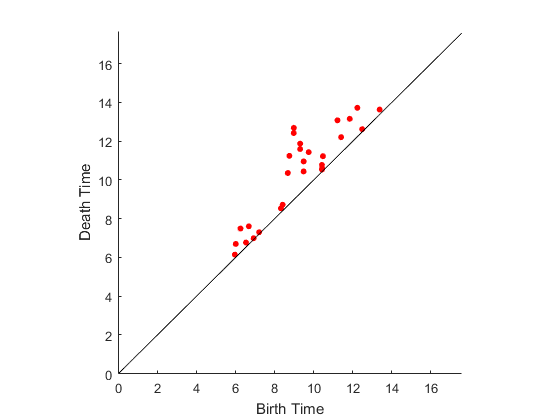}
   \caption{Eccentricity constant 0.25}
\end{figure}

Note that there are many more, and longer, persistence intervals than the same filtration with constant of 0.5 on the eccentricity term. What this constant regulates is effectively how far away from the basepoint to filter the space as if it is the rips filtration. The smaller the constant, the greater the portion on the exterior (relative to the basepoint) of the space is treated similarly to rips. In fact, if this constant is 0 we see that this eccentricity filtration is equivalent to the rips filtration. 


\section{Acknowledgements}
The material in this report emerged from a research project carried out while the authors were in residence at the Institute for Computational and Experimental Research in Mathematics in Providence, RI, during the Summer@ICERM 2017 program \cite{sati17}. The effort was supported by the National Science Foundation under Grants Nos. DMS-1439786, IIS-1422400, and  CCF-1526513. J. \'A. S\'anchez was partially supported by the TDA project conducted by the Center for Research in Mathematics (CIMAT) in Guanajuato, Mexico.

\bibliographystyle{alpha}
\bibliography{biblio}

\newpage

\end{document}